\theoremstyle{plain}
\newtheorem{theorem}[equation]{Theorem}
\newtheorem{corollary}[equation]{Corollary}
\newtheorem{lemma}[equation]{Lemma}
\newtheorem{proposition}[equation]{Proposition}
\theoremstyle{definition}
\newtheorem{remark}[equation]{Remark}
\newcommand{\IC}{\mathbb{C}}
\newcommand{\IN}{\mathbb{N}}
\newcommand{\IP}{\mathbb{P}}
\newcommand{\IR}{\mathbb{R}}
\newcommand{\IZ}{\mathbb{Z}}
\DeclareMathOperator{\spec}{spec}
\DeclareMathOperator{\tr}{tr}
\DeclareMathOperator{\Tr}{Tr}
\DeclareMathOperator{\rank}{rank}
\newcommand{\Ker}{\mathrm{Ker \,}}
\renewcommand\Re{\mathrm Re}
\renewcommand\Im{\mathrm Im}
\def\d/{/\mspace{-6.0mu}/}
\newcommand{\p}{\partial}
\title{The Asymptotic Structure of Monopoles in $\IR^3$, Calorons, 
and  Instantons on ALF Spaces }
\author{\small{Sergey A. Cherkis}\footnote{Partially supported by Simons Foundation Grant MP-TSM-00002295} \\ \scriptsize{University of Arizona}\\ \footnotesize{\textsf{cherkis@arizona.edu}} \and \small{Mark Stern}\footnote{Partially supported by Simons Foundation Grant 3553857} \\ \scriptsize{Duke University} \\ \footnotesize{\textsf{stern@math.duke.edu}}}
\date{}
\begin{document}

 \maketitle
\begin{abstract}
Abstract: We study the asymptotic structure of instantons on multi-centered Taub-NUT manifolds, calorons, and monopoles on $\IR^3$. We show that, without any assumptions on symmetry breaking, these instantons and monopoles asymptotically decompose as a sum of U(1) monopoles. 
\end{abstract}
\section{Introduction}\label{Intro}
Let $(E,A)$ be a hermitian vector bundle with a hermitian connection over an oriented Riemannian 4-manifold, $M$. We call $(E,A)$ an {\em instanton} if the curvature $F_A$ of the connection is $L^2$ and anti-self-dual. The manifold $M$ is called an {\em ALF gravitational instanton} if it is a complete hyperk\"ahler 4-manifold with cubic volume growth and  faster than quadratic  curvature decay. Instantons on such manifolds have been extensively studied. The Nahm transform in its many incarnations has been one of the main tools developed to study the moduli spaces of these instantons. 

The analysis of the Nahm transform requires a fine understanding of the asymptotic structure of the instanton connection and curvature. In particular, one generally requires that the curvature decays quadratically and that at large radius the connection decomposes as a direct sum of $U(1)$ instanton bundles up to a negligible error.  Jaffe and Taubes \cite[Theorem 10.3 and Theorem 10.5]{JT80} proved these asymptotic properties for $SU(2)$ monopoles in $\IR^3$. Until \cite{First}, these prescribed asymptotics were simply assumed for all higher rank gauge groups, leaving unaddressed whether the Nahm transform gave information about the full moduli space or only the special subset satisfying the requisite asymptotics. 

In \cite[Theorem A and Theorem B]{First}, we proved that the instanton curvature always decays quadratically and the connection has the desired asymptotics, under the assumption of {\em maximal symmetry breaking}. The ALF spaces are asymptotically circle bundles over a finite quotient of $\IR^3$. Maximal symmetry breaking is the condition that, at sufficiently large radius, the holonomy of $E$ in the $S^1$ fiber decomposes $E$ into a direct sum of eigenline bundles, with all eigenvalues having multiplicity one.  Upon dimensional reduction from instantons on $S^1\times \IR^3$ to monopoles on $\IR^3$,  maximal symmetry breaking becomes the condition that $E$ decomposes into a direct sum of eigenline bundles of the Higgs field,  with all eigenvalues having multiplicity one.  This assumption is automatically satisfied for all nontrivial $SU(2)$   monopoles on $\IR^3$. 
Most analysis of instanton moduli spaces has been carried out under the assumption of maximal symmetry breaking.
In the absence of the assumption of maximal symmetry breaking, our understanding of the Nahm transform and the moduli space of instantons is much weaker.
Stratification of monopole moduli spaces in this general case (identifying it with the stratification of rational maps) was given in \cite{Mur89}. Existence of hyperk\"ahler metrics on each stratum is conjectured in \cite{MS03}. 
Two recent papers \cite{CN22} and \cite{Me24} further advance our understanding of these moduli spaces. 
In \cite{CN22}, Charbonneau and Nagy develop the Nahm transform to construct monopoles without maximal symmetry breaking. In \cite{Me24}, Mendizabal uses Fredholm theory and inverse function theorem techniques to construct monopole moduli spaces, without assuming maximal symmetry breaking. The latter analysis uses essentially the assumption that the monopoles asymptotically decompose as 
a direct sum of $U(1)$ monopoles. 

  Our goal in this paper is to show that --- {\em without the assumption of maximal symmetry breaking } --- monopoles on $\IR^3$, calorons, and instanton connections on multi-center Taub-NUT manifolds always asymptotically decompose as a sum of $U(1)$ instantons (or monopoles), and so remove a major obstacle to the careful study of the moduli space of instantons with arbitrary asymptotic holonomy. 

 In order to state our main results, we first recall the structure of the $k$-centered Taub-NUT manifold,  $TN_k$. We recall that $TN_k$ is hyperk\"ahler with an isometric circle action with $k$ fixed points, $\{\nu_1,\ldots,\nu_k\}$. The quotient of  $TN_k$ by this circle action is $\IR^3$. Let $\Pi:TN_k\to \IR^3$, denote the quotient map. The metric $g$ can be be expressed in terms of the Euclidean metric $dx^2$ on $\IR^3$ as 
\begin{align}\label{eq:metric}
Vdx^2+V^{-1}(d\theta +\omega)^2,
\end{align}
where for some $l>0$, 
\begin{align}\label{ldef}V = l+\sum_{\sigma = 1}^k\frac{1}{2|x-\nu_\sigma|},  
\end{align}
 $\theta$ is a local coordinate on the circle fiber, and $\omega$ is a local connection one-form on the circle bundle satisfying
\begin{align}d\omega =  \ast_{\IR^3}dV.
\end{align} 
When $k=0$, $TN_k$ specializes to $\IR^3\times S^1$, and all of our results extend to this context by setting $k=0$.

We first prove the following asymptotic structure theorem for monopoles on $\IR^3$. 
\begin{theorem}\label{thmA}
Let $K\subset \IR^3$ be a compact set (possibly empty). 
Let $(E,d_A,\Phi)$ be a hermitian vector bundle over $\IR^3\setminus K$, with connection $d_A$ and Higgs field $\Phi$, satisfying the monopole equation 
\(d_A \Phi = * F_A\), with curvature $F_A\in L^2(\IR^3\setminus K)$.  Then $F_A$ decays quadratically. At large radius, $E$ decomposes into a direct sum of subbundles $E=\oplus E(\lambda,\beta)$. $\Phi$ acts on $E(\lambda,\beta)$ as  $ i\lambda +\frac{i\beta}{r} +o(\frac{1}{r})$, with each $\beta\in \frac{1}{2}\IZ$. 
Let $P_{\lambda,\beta}$ denote unitary projection onto $E(\lambda,\beta)$. Then 
for $\lambda_1\not = \lambda_2$, 
$$|P_{\lambda_1,\beta_1}d_A P_{\lambda_2,\beta_2}|= O(r^{-N}), \forall N,$$ 
and for $\beta_1\not = \beta_2$ 
$$|P_{\lambda,\beta_1}d_A P_{\lambda,\beta_2}|= o(r^{-1}).$$ 
\end{theorem}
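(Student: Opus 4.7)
The plan is to carry out three linked analytic steps: first, establish quadratic decay of $F_A$ and its covariant derivatives; second, build the leading $\lambda$-decomposition of $\Phi$ via holomorphic spectral projections and extract super-polynomial decay of the off-diagonal connection blocks; third, pass to an induced Bogomolny pair on each eigenblock and extract the subleading $\beta$-decomposition together with the half-integer quantization. The Weitzenböck identity $\Delta|\Phi|^2 = -2|d_A\Phi|^2$ and the differential inequality $\Delta|F_A|^2 \le -2|\nabla_A F_A|^2 + C|F_A|^3$, combined with $F_A\in L^2$ and standard Moser/elliptic bootstrapping, yield $|\nabla_A^k F_A| = O(|x|^{-2-k})$ for every $k\ge 0$. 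The corresponding portion of the argument in \cite{First} does not rely on maximal symmetry breaking. In particular $|d_A\Phi|=O(r^{-2})$, so the eigenvalues of $-i\Phi$ converge uniformly on the asymptotic sphere to constants $\lambda_1,\ldots,\lambda_m$.

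For the second step, fix $\lambda = \lambda_j$ and $\delta > 0$ less than half the asymptotic spectral gap, and define
$$ P_\lambda = \frac{1}{2\pi i} \oint_{|z-i\lambda|=\delta} (\Phi-z)^{-1}\, dz, $$
which is smooth for $r$ large. Differentiating under the integral yields $|d_A P_\lambda|=O(|d_A\Phi|)=O(r^{-2})$. For $\lambda_1\neq\lambda_2$ the identity $P_{\lambda_1}\Phi P_{\lambda_2}=0$ together with the commutator rule $[d_A,\Phi]=d_A\Phi$ lets one rewrite $P_{\lambda_1}d_A P_{\lambda_2}$ so as to gain a factor of $d_A\Phi$ divided by the spectral gap $\lambda_1-\lambda_2$; iterating this manipulation $k$ times and invoking the first-step decay of $\nabla_A^j d_A\Phi$ gives $|P_{\lambda_1}d_A P_{\lambda_2}|=O(r^{-N})$ for every $N$.

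For the third step, set $E_\lambda := P_\lambda E$, let $A_\lambda := P_\lambda A P_\lambda$ be the induced connection, and let $\Psi_\lambda := P_\lambda(\Phi-i\lambda)P_\lambda$. By the second step, $(A_\lambda,\Psi_\lambda)$ satisfies the Bogomolny equation on $E_\lambda$ up to rapidly decaying error, and $|\Psi_\lambda|=O(r^{-1})$. The key subclaim is that $r\Psi_\lambda$ converges along every radial ray to a smooth Hermitian endomorphism $B_\lambda$ of $E_\lambda$ over the sphere at infinity; this should follow from a Weitzenböck computation for the reduced Bogomolny equation which gives integrability in $r$ of $\partial_r(r\Psi_\lambda)$. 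Spectral projections of $B_\lambda$ then define $E(\lambda,\beta) := P_{\lambda,\beta}E_\lambda$, on which $\Phi$ acts as $i\lambda + i\beta/r + o(r^{-1})$. The leading curvature on $E(\lambda,\beta)$ equals $-i\beta\sin\theta\, d\theta\wedge d\phi$ times the identity, so $E(\lambda,\beta)$ splits topologically over the asymptotic $S^2$ into line bundles each of first Chern number $-2\beta$, forcing $\beta\in\frac{1}{2}\IZ$. Finally, the $o(r^{-1})$ estimate for $P_{\lambda,\beta_1}d_A P_{\lambda,\beta_2}$ with $\beta_1\neq\beta_2$ is obtained by the same resolvent manipulation as in the second step, now applied to $B_\lambda$ with mass gap $(\beta_1-\beta_2)/r$, where the improvement over the naive $O(r^{-1})$ rate comes from the $o(1)$ convergence $r\Psi_\lambda\to B_\lambda$.

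The hardest step is expected to be establishing genuine convergence, rather than mere boundedness, of $r\Psi_\lambda$: because the effective monopole on $E_\lambda$ has zero Higgs vacuum, the exponential decay of off-diagonal pieces that drives the Jaffe--Taubes argument is replaced by only power-law rates, so the convergence must be extracted from a careful iterated bootstrap using the reduced Bogomolny equation together with the rapid-decay error terms produced in the second step. Once this subclaim is in hand, the $\beta$-decomposition, its topological quantization, and the final off-diagonal estimates all follow from the spectral-projection calculus already developed in the second step.
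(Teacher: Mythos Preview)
Your first step contains a genuine gap that inverts the logical order of the argument. You assert that quadratic curvature decay $|F_A|=O(r^{-2})$ follows from $F_A\in L^2$ together with the Bochner inequality and ``standard Moser/elliptic bootstrapping,'' and that the analogous part of \cite{First} did not use maximal symmetry breaking. Both claims are incorrect. Moser iteration (Corollary~\ref{goodcoro}) only yields
\[
S(R)\leq C\Bigl(\frac{n^2}{R^2}+S(\tfrac{R}{2})^{(3/4)^n}T(\tfrac{R}{2})^{2-2(3/4)^n}\Bigr),
\]
and this closes to $S(R)=O(R^{-2})$ \emph{only after} one knows $T(R)=O(R^{-1})$. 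The naive route to $T(R)$ via $\Delta\tfrac12|\Phi|^2=|F_A|^2$ and the divergence theorem produces boundary terms $\int_{S_R}\langle\Phi,\ast F_A(\partial_r)\rangle$ which, because $|\Phi|\not\to 0$, give only $T(R)\lesssim R^2 S(R)$; feeding this back into Moser does not bootstrap. In \cite{First} the cubic term $\langle[F_{jk},F_{km}],F_{jm}\rangle$ was controlled precisely by exploiting that under maximal symmetry breaking the block-diagonal part of $F_A$ is abelian, so the cubic term is built from off-diagonal pieces and decays faster --- see the discussion after \eqref{offdiag1}. That mechanism is unavailable here.

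The paper therefore proceeds in the opposite order from your outline. Spectral convergence of $\Phi$ (Proposition~\ref{togap}, Corollary~\ref{spec0}) is proved \emph{first}, using only $T(R)\to 0$; this lets one define $\Phi_\lambda=P_\lambda(\Phi-i\lambda)P_\lambda$, which does vanish at infinity. Computing $\Delta\tfrac12|\Phi_\lambda|^2$ (equation~\eqref{trunclap}) and playing the resulting integral identities \eqref{unwtintl2}--\eqref{bochum} against the off-diagonal bound \eqref{offdiag2} produces the dichotomy ``either $R^{1-\epsilon}T^D(R)$ decreases or $T^D\lesssim T^B$,'' and an iteration over dyadic scales (Proposition~\ref{proppart}, Theorem~\ref{goodtheorem}) then yields $T(R)=O(R^{-1})$. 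Quadratic decay and the derivative bounds you invoke are \emph{corollaries} of this, not inputs to it. Your steps~2 and~3 are broadly compatible with the paper's Sections~6--7, and you correctly identify convergence of $r\Phi_\lambda$ as the delicate point (handled in Proposition~\ref{exper1} via a weighted energy $\tau(L)=\int r^{-1}|\nabla(r\Phi_\lambda)|^2$), but they cannot begin until the decay is in hand.
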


\begin{remark}
We have allowed the exclusion of the compact set $K$ in the above theorem in order to allow its application to monopoles with Dirac singularities. 
\end{remark} 

For instantons on $TN_k$ we obtain the following asymptotic structure.  
\begin{theorem}\label{thmB}
Let $(E,d_A)$ be a hermitian vector bundle with instanton connection over $TN_k$, $k\geq 0$. Then the curvature $F_A$ decays quadratically. 
There exist a compact set $K\subset\IR^3$ and hermitian vector bundles with connection 
$(W(\lambda,\beta),d_{W(\lambda,\beta)})$ 
on $\IR^3\setminus K$ such that on $\Pi^{-1}(\IR^3\setminus K)$,
$$E={\mathop{\oplus}_{(\lambda,\beta)} } \Pi^* W(\lambda ,\beta),$$
and 
$$d_A = {\mathop{\oplus}_{(\lambda,\beta)} } \left( \Pi^*d_{W(\lambda,\beta)}+ i\lambda+\frac{i\beta}{r}+o(\frac{1}{r})\right) + O(r^{-N}), \  \forall N,$$  
with each $\beta$ satisfying $ l\beta-\frac{\lambda k}{2}\in \frac{1}{2}\IZ$. 
Let $P_{\lambda,\beta}$ denote unitary projection onto $\Pi^* W(\lambda ,\beta)$. Then 
for $\lambda_1\not = \lambda_2$, 
$$ |P_{\lambda_1,\beta_1}d_A P_{\lambda_2,\beta_2}|= O(r^{-N}), \forall N,$$ 
and for $\beta_1\not = \beta_2$ 
$$|P_{\lambda,\beta_1}d_A P_{\lambda,\beta_2}|= o(r^{-1}).$$ 
\end{theorem}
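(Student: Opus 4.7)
The strategy is to reduce the theorem to Theorem~\ref{thmA} via the asymptotic $S^1$ isometry of $TN_k$. Fix a compact $K\subset\IR^3$ containing the Taub--NUT centers $\nu_1,\ldots,\nu_k$ and large enough that $\Pi$ restricts to a principal $S^1$-bundle on the preimage of $\IR^3\setminus K$ (the $k=0$ case reduces to $\IR^3\times S^1$ with trivial $\omega$). The first step is to establish quadratic decay of $|F_A|$. Since $F_A\in L^2$ and $TN_k$ is ALF with the correct curvature behavior, I would adapt the Weitzenb\"ock/moment-map arguments already used in the caloron setting of \cite{First} to the slightly more intricate geometry of $TN_k$; the bounded size of fibers at infinity ($V\to l>0$) makes this a close analog.

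Next, I would show that on $\Pi^{-1}(\IR^3\setminus K)$ the connection $A$ is well approximated by an $S^1$-invariant connection $\bar A$. After selecting a gauge along each fiber using parallel transport around the circle to diagonalize the fiber holonomy, one averages $A$ against the $S^1$ action. Quadratic curvature decay combined with bounded fiber length forces $|A-\bar A|$ to be $o(1/r)$. An $S^1$-invariant anti-self-dual connection on $\Pi^{-1}(\IR^3\setminus K)$ admits the Kronheimer/Gibbons--Hawking decomposition $\bar A = \Pi^* d_W + \Phi\,(d\theta+\omega)$, where $(d_W,\Phi)$ satisfies the Bogomolny equation on $\IR^3\setminus K$ (after absorbing the $V$-factor into $\Phi$). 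The $L^2$ bound on $F_A$ descends to an $L^2$ bound on the reduced curvature, so Theorem~\ref{thmA} applies.

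Invoking Theorem~\ref{thmA} on the monopole $(d_W,\Phi)$ yields a decomposition $W=\oplus W(\lambda,\beta)$ with $\Phi$ acting as $i\lambda+i\beta/r+o(1/r)$ on $W(\lambda,\beta)$, together with the stated off-diagonal decay of $d_W$ under the projections $P_{\lambda,\beta}$. Pulling back by $\Pi$ and absorbing $A-\bar A$ into the $o(1/r)$ error produces $E=\oplus\Pi^*W(\lambda,\beta)$ and the asserted form of $d_A$. The quantization $l\beta-\lambda k/2\in\tfrac12\IZ$ is topological: integrating the curvature of the $U(1)$ summand labelled by $(\lambda,\beta)$ over a large $2$-sphere $\Sigma\subset TN_k$ whose projection surrounds the nuts, and using $d\omega=\ast_{\IR^3} dV$ with $V=l+\sum_\sigma 1/(2|x-\nu_\sigma|)$, produces precisely the combination $l\beta-\tfrac{\lambda k}{2}$ (up to the normalization fixing half-integrality).

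The main obstacle is the averaging step. I need a gauge in which the fiber holonomy is sufficiently close to constant that the non-invariant part of $A$ really is $o(1/r)$ after averaging. The Gibbons--Hawking 1-form $\omega$ forces this gauge to accommodate Dirac-string-like choices (resolved globally by the Taub--NUT topology), and the factor $V^{-1}$ in the fiber metric means one must verify that curvature bounds propagate to holonomy bounds along fibers of varying (but bounded) size. Once the averaging is under control, the remainder is essentially bookkeeping combining Theorem~\ref{thmA} with the pullback structure and the topology of $TN_k\to\IR^3$.
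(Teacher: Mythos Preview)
Your strategy differs substantially from the paper's and, as written, has a genuine gap at the very first step.

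You propose to obtain quadratic curvature decay by ``adapting the Weitzenb\"ock/moment-map arguments already used in the caloron setting of \cite{First}.'' But in \cite{First} quadratic decay was proved \emph{only under maximal symmetry breaking}; removing that hypothesis is precisely the content of the present paper. In the monopole case, the paper's route to $T(R)\le C R^{-1}$ (and hence $S(R)=O(R^{-2})$) goes through the spectrally truncated Higgs fields $\Phi_\lambda$ and the differential inequalities \eqref{dortmund}--\eqref{bochum}; there is no shortcut via $\epsilon$-regularity or Moser iteration alone. In the instanton case there is no Higgs field at all to truncate, so one cannot simply invoke the monopole argument. Your proposal postpones this difficulty to an unspecified adaptation, but that adaptation \emph{is} the heart of the proof.

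The averaging step is also problematic. Even granting quadratic decay, the $S^1$-average $\bar A$ of an anti-self-dual connection is not anti-self-dual: curvature is quadratic in the connection, so $F_{\bar A}=\overline{F_A}+[\,\overline{A}-A,\overline{A}-A\,]$-type terms. At best you obtain a pair $(d_W,\Phi)$ satisfying the Bogomolny equation up to an $O(r^{-4})$ error. Theorem~\ref{thmA} is stated and proved only for exact monopoles, and its proof uses the exact identity $\Delta\tfrac12|\Phi_\lambda|^2=|F_A^\lambda|^2+\ldots$ (equation \eqref{trunclap}) repeatedly; propagating an approximate-Bogomolny error through that argument would require reproving essentially everything.

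What the paper actually does is bypass the reduction to a monopole entirely. It treats $\nabla_\theta$ as an unbounded-operator-valued replacement for $\Phi$, first proving (via spectral $\zeta$-functions, Section~\ref{subsecsf}) that the fiberwise spectrum of $\nabla_\theta$ converges as $|p|\to\infty$. This allows one to define finite-rank spectral projections $P_\lambda^I$ and Hilbert--Schmidt operators $\Phi_\lambda^I:=P_\lambda^I(\nabla_\theta-i\lambda)P_\lambda^I$ on the Hilbert bundle $\mathcal{L}_b=L^2(\Pi^{-1}(b),E)$. These satisfy equations (e.g.\ \eqref{bracket}) so close to those governing $\Phi_\lambda$ in the monopole case that the entire chain of estimates---$T_I(R)\le C_\epsilon R^{\epsilon-1}$, then $T_I(R)\le C R^{-1}$, then $S_I(R)=O(R^{-2})$, then the angular estimates and the Chern--Weil quantization---runs in parallel. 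The bundles $W(\lambda,\beta)$ arise as finite-rank spectral subbundles of $\mathcal{L}$, not from averaging.
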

 \begin{remark}
Note that, due to a minor change in convention,  $\lambda$ in the above theorem corresponds to $-\lambda$ in \cite[Theorem B]{First}.
\end{remark}
 \begin{remark}
In Theorems \ref{thmA} and \ref{thmB}, the error term is given as $o(\frac{1}{r})$. In the gauge theory literature, the stronger assumption of an $O(r^{-2})$ error term is often assumed and such a stronger estimate was proved in \cite[Theorem B]{First}, under the assumption of maximal symmetry breaking. It is unclear whether the weaker error bound in the current case  is an artifact of the techniques employed or reflects new phenomena in the case of nonmaximal symmetry breaking. 
\end{remark}
Theorem \ref{thmA} is, of course,  a special case of Theorem \ref{thmB}. Nonetheless, we first give a separate proof of Theorem \ref{thmA} and then prove Theorem \ref{thmB}, as we think the second proof is easier to understand once the reader has seen the first.

The proof of Theorem \ref{thmA} requires a study of the interplay between several quantities.  For a ball \(B_R(0)\) of radius \(R\) and its complement \(B_R(0)^c\), set
$$T(R):= \int_{B_R(0)^c}|F_A|^2dv,$$
and
$$S(R):= \|F_A\|_{L^\infty(B_R(0)^c)}.$$
Moser iteration tells us that for   $n\in \IN$,  we have for some $C>0$, 
$$S(R)\leq C(\frac{4n^2}{R^2}+S(\frac{R}{2})^{(\frac{3}{4})^n}T(\frac{R}{2})^{2-2(\frac{3}{4})^n}).$$ To control $T(R)$, it is natural to consider the divergence theorem applied to the relation 
$$\Delta \frac{1}{2}|\Phi|^2= -|F_A|^2,$$
to express $T(R)$ in terms of  boundary integrals of $\langle \ast F_A(\frac{\p}{\p r}),\Phi\rangle$ .  Unfortunately, the fact that for nontrivial monopoles $\lim_{|x|\to\infty}|\Phi|(x)\not = 0$ makes it hard to work with these boundary integrals. So instead, we first show that there is a well defined limiting spectrum $\spec (\Phi(\infty))$ for $\Phi(x)$ as $|x|\to\infty$ and then, for $i\lambda\in \spec (\Phi(\infty))$, define spectrally truncated operators $\Phi_\lambda:= P_\lambda(\Phi -i\lambda I)P_\lambda,$ where $P_\lambda$ is a spectral projection onto a sufficiently small neighborhood $i\lambda$.  Computing $\Delta|\Phi_\lambda|^2$ in terms of the curvature and applying the divergence theorem to this expression then provides enough feedback between   $\sum_{i\lambda\in \spec (\Phi(\infty))}\|\Phi_\lambda\|_{L^\infty (B_R(0)^c)}, S(R)$, and $T(R)$ for us to be able to obtain the desired quadratic curvature decay and $O(\frac{1}{r})$ bounds for the eigenvalues of  $\Phi_\lambda$. To pass from 
 $\spec (\Phi(x))=\{i \lambda+O(\frac{1}{r}):i\lambda\in \spec (\Phi(\infty))\}$ to $\spec (\Phi(x))=\{i \lambda+i \frac{m}{2 r}+o(\frac{1}{r}):i\lambda\in \spec (\Phi(\infty)),  m \in \mathfrak{m} \}$,     
  for some $\mathfrak{m}\subset \IZ$, requires a further refined spectral truncation of $\Phi$ and an application of Chern-Weil theory to quantize the $m$ by relating them to the first Chern numbers of subbundles defined by the spectral truncations. 

In the analysis of the refined spectral truncation, some complications arise from the fact that norms of derivatives of spectral projections  
grow schematically like $|F_A||\frac{1}{\lambda_a}-\frac{1}{\lambda_b}|$ where $i\lambda_a$ is an eigenvalue of $\Phi$ whose eigenvector lies in the image of the projection and  $i\lambda_b$ is an eigenvalue of $\Phi$ whose eigenvector lies outside this image. When $i\lambda$ has multiplicity greater than $1$, we can have $|\frac{1}{\lambda_a}-\frac{1}{\lambda_b}|=O(r)$. (Here the eigenvalues lie in $\spec (\Phi(x))$ for some finite $x$ - not in $\spec (\Phi(\infty))$.) This weakens many of our estimates and is the primary reason our error terms in both  theorems are of the form $o(\frac{1}{r})$ rather than the $O(\frac{1}{r^2})$ we obtained in \cite[Theorem B]{First}.

In order to extend the proof of Theorem \ref{thmA} to instantons on $TN_k$, we need a replacement for the Higgs field $\Phi$. The dimensional reduction origins of $\Phi$ suggest that the replacement should be $\nabla_{\theta}:= \nabla_{\frac{\p}{\p\theta}}.$
  Of course this is an unbounded operator and therefore an unsuitable replacement. On the other hand, the analogs of the spectrally truncated operators $\Phi_\lambda$ are  Hilbert-Schmidt  operators acting on a vector bundle over $\IR^3\setminus K$, $K$ compact, whose fiber at $b$ is a suitable finite rank subspace of the Hilbert space $L^2(\Pi^{-1}(b),E)$ defined by spectral data. To define such spectral truncations in the complement of a compact set requires first proving that $\spec ((\nabla_\theta)_{L^2(\Pi^{-1}(b),E)})$ converges to a well defined limiting discrete spectrum as $|b|\to \infty$. In Subsection \ref{subsecsf}, we prove such a limit exists by studying the behavior of spectral $\zeta$ functions. Given the existence of the spectral limit, we define $\Phi_\lambda^I:= P_\lambda^I(\nabla_\theta -i\lambda I)P_\lambda^I$ as suggested by the monopole case, for a suitable spectral projection $P_\lambda^I$. We then show that $\Phi_\lambda^I$ satisfies equations so similar to its monopole forebear that the proof of Theorem \ref{thmB} is essentially the same as that of Theorem \ref{thmA}, and we only repeat those details where there is some nontrivial deviation from the monopole case.

\section{Monopoles in \texorpdfstring{$\IR^3$}{R³}}
Let $\mathcal{E}$ be a hermitian vector bundle on 
$ \IR^3\setminus B_{R_0}(0)$.     
Let $(d_A,\Phi)$ be a smooth hermitian connection and skew symmetric endomorphism  on $E$ satisfying the monopole equation
\begin{align}\label{mono1}
d_A\Phi = \ast F_A.
\end{align}
\begin{remark}
The most natural assumption for analyzing monopoles is that  $F_A\in L^2$ (or $L^2$ outside a compact set if we wish to allow for Dirac singularities, for example). In our initial treatment of monopoles, we will make the additional simplifying assumption that the Higgs field is bounded. Subsequently, when we consider instantons on $TN_k$, and on $\IR^3\times S^1$, there is no Higgs field available for which we can assume a bound. The quadratic curvature bound that we prove for instantons on $TN_k$, $k\geq 0$, immediately implies, upon lifting a monopole to a caloron, that the original Higgs field was bounded. Hence, since Theorem \ref{thmA} is a special case of  Theorem \ref{thmB}, we see that the theorem holds without the hypothesis that $|\Phi|$ is bounded. 
\end{remark}

Assume $F_A\in L^2(\IR^3\setminus B_{R_0}(0)).$ For $R>R_0$, define the monotone decreasing quantities
$$T(R):= \int_{B_R(0)^c}|F_A|^2dv,$$
and 
$$S(R):= \|F_A\|_{L^\infty(B_R(0)^c)}.$$
The monopole equation is a dimensional reduction of the instanton equation. Hence any monopole $(E,d_A,\Phi)$ lifts to an instanton $(\tilde E,d_A+d\theta(\frac{\p}{\p\theta} + \Phi))$ on $S^1\times \IR^3$. Hence  Uhlenbeck's $\epsilon-$regularity theorem for Yang-Mills connections extends to our context.   
\begin{lemma}[See for example {{\cite[Theorem 2.2.1]{Tian}}}]\label{epreg} 
Let $(d_A,\Phi)$ be a smooth monopole with $L^2$ curvature on a hermitian  vector bundle $\mathcal{E}$ over $\IR^3\setminus B_{R_0}(0)$. There exist constants $\epsilon, C>0$  such that for every $\rho>0$ and $ p\in \IR^3\setminus B_{R_0+\rho}(0)$, 
for which
\begin{align}\label{epsreg}\int_{B_\rho(p)}|F_A|^2dv<\frac{\epsilon}{\rho+1},
\end{align}
we have
\begin{align}\label{epsregc} |F_A|^2(p)\leq\frac{C}{\rho^3}\int_{B_\rho(p)}|F_A|^2dv.
\end{align}
\end{lemma}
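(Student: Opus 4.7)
The plan is to deduce this three-dimensional statement from Uhlenbeck's four-dimensional $\epsilon$-regularity theorem for anti-self-dual connections, exploiting the dimensional reduction already recalled in the remark preceding the lemma. Rather than lift to $S^1\times\IR^3$, we lift to the universal cover $\IR_\theta\times(\IR^3\setminus B_{R_0}(0))$, forming the pullback bundle $\tilde{\mathcal{E}}$ with the $\theta$-invariant connection $\tilde A := A+\Phi\, d\theta$. A direct computation gives $F_{\tilde A}=F_A+(d_A\Phi)\wedge d\theta$; combined with the monopole equation $d_A\Phi=\ast F_A$, this shows that $\tilde A$ is anti-self-dual and that its pointwise curvature norm is $\theta$-independent and satisfies $|F_{\tilde A}|^2=2|F_A|^2$.

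Fix $p\in\IR^3\setminus B_{R_0+\rho}(0)$ and consider the Euclidean four-ball $B^4_\rho(p,0)\subset\IR_\theta\times\IR^3$. Because $|F_{\tilde A}|^2$ is $\theta$-independent and the $\theta$-slice of this four-ball over $x\in B_\rho(p)$ has length $2\sqrt{\rho^2-|x-p|^2}\leq 2\rho$, Fubini yields
$$\int_{B^4_\rho(p,0)}|F_{\tilde A}|^2\,dv_{\IR^4}\;\leq\;4\rho\int_{B_\rho(p)}|F_A|^2\,dv.$$
Using hypothesis \eqref{epsreg}, the lifted energy is bounded by $4\rho\epsilon/(\rho+1)\leq 4\epsilon$. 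Choosing $\epsilon$ small enough that $4\epsilon$ is below the universal threshold in Uhlenbeck's four-dimensional $\epsilon$-regularity theorem (\cite[Theorem 2.2.1]{Tian}) applied to $\tilde A$ on $B^4_\rho(p,0)$, we obtain a universal constant $C_0$ with
$$|F_{\tilde A}|^2(p,0)\;\leq\;\frac{C_0}{\rho^4}\int_{B^4_\rho(p,0)}|F_{\tilde A}|^2\;\leq\;\frac{4C_0}{\rho^3}\int_{B_\rho(p)}|F_A|^2\,dv,$$
from which \eqref{epsregc} follows with $C=2C_0$ via $|F_A|^2(p)=\tfrac12|F_{\tilde A}|^2(p,0)$.

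The only design choice that merits attention is the decision to lift to the full line $\IR_\theta$ rather than to a circle: this is essential because $\rho$ may be arbitrarily large, whereas Uhlenbeck's theorem is cleanest when the four-ball embeds isometrically in the ambient four-manifold. Everything else is routine bookkeeping --- the curvature identity for the lifted connection, the Fubini slicing that converts the four-dimensional $L^2$ norm into a weighted three-dimensional integral, and the calibration of $\epsilon$ against Uhlenbeck's universal threshold. Observe that the factor $(\rho+1)^{-1}$ in hypothesis \eqref{epsreg} is precisely what is needed to absorb the $4\rho$ growth produced by the slicing, so no further refinement of the small-energy hypothesis is necessary.
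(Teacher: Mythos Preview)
Your argument is correct and follows exactly the approach the paper sketches in the remark following the lemma: lift the monopole to a $\IZ$-periodic (equivalently, $\theta$-invariant) instanton on $\IR^4$, use Fubini and $\theta$-invariance to bound the four-dimensional energy on $B^4_\rho$ by $O(\rho)$ times the three-dimensional energy on $B_\rho$, and invoke Uhlenbeck's $\epsilon$-regularity. Your explicit factor $4\rho$ and the paper's stated factor $(1+\rho)$ differ only in bookkeeping and both yield the same conclusion once absorbed into the threshold $\epsilon$.
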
  
\begin{remark}
The replacement of the usual condition  $\int_{B_\rho(p)}|F_A|^2dv< \epsilon$ by\\ $\int_{B_\rho(p)}|F_A|^2dv<\frac{\epsilon}{\rho+1}$  arises from lifting the monopole  to a $\IZ$-periodic instanton on $\IR^4$ (to avoid the injectivity constraints in \cite[Theorem 2.2.1]{Tian}). The integral of $|F_A|^2$ on the lifted ball is bounded by $(1+\rho)$ times the integral of $|F_A|^2$ on $B_\rho\subset \IR^3$. Alternatively if one runs the $\epsilon$-regularity argument on $\IR^3$, there is a change of scaling in Moser iteration due to the dimension drop that leads to the altered constraint. 
\end{remark}
\begin{corollary}\label{coro1}
$\lim_{R\to\infty}S(R) = 0.$
\end{corollary}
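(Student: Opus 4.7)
The plan is to apply the $\epsilon$-regularity estimate of Lemma~\ref{epreg} at a \emph{fixed} small scale, exploiting the fact that the tail energy $T(R)$ is already known to vanish as $R\to\infty$ by the $L^2$ hypothesis on $F_A$. Concretely, I would fix $\rho = 1$ throughout the argument, so that the threshold $\epsilon/(\rho+1) = \epsilon/2$ in \eqref{epsreg} becomes a fixed constant.

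First, for any point $p\in \IR^3$ with $|p|\geq R_0+2$, every point of $B_1(p)$ lies at Euclidean distance at least $|p|-1$ from the origin, so $B_1(p)\subset B_{|p|-1}(0)^c$ and hence
\begin{align*}
\int_{B_1(p)}|F_A|^2\, dv \;\leq\; T(|p|-1).
\end{align*}
Since $F_A\in L^2(\IR^3\setminus B_{R_0}(0))$, the monotone decreasing function $T(R)$ tends to $0$ as $R\to\infty$. Therefore we can choose $R_1 \geq R_0+2$ so large that $T(R_1-1) < \epsilon/2$, where $\epsilon$ is the constant supplied by Lemma~\ref{epreg}. For every $p$ with $|p|\geq R_1$, the smallness hypothesis \eqref{epsreg} is then satisfied with $\rho=1$, so the conclusion \eqref{epsregc} yields
\begin{align*}
|F_A|^2(p) \;\leq\; C\int_{B_1(p)}|F_A|^2\, dv \;\leq\; C\, T(|p|-1).
\end{align*}

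Taking the supremum over $|p|\geq R$ for $R\geq R_1$ gives $S(R)^2 \leq C\, T(R-1)$, and the right-hand side tends to $0$ as $R\to\infty$. This establishes $\lim_{R\to\infty} S(R)=0$. There is no genuine obstacle in this argument; the only observation required is that once one is sufficiently far from the origin, the $\epsilon$-regularity threshold can be met at a \emph{uniform} scale $\rho=1$, which converts the mere integrability of $|F_A|^2$ directly into a quantitative pointwise decay of $|F_A|$ via Moser iteration built into Lemma~\ref{epreg}.
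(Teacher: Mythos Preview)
Your proof is correct and follows essentially the same approach as the paper: fix $\rho=1$, use $T(R)\to 0$ to eventually satisfy the $\epsilon$-regularity threshold \eqref{epsreg}, and then read off $|F_A|^2(p)\leq C\,T(|p|-1)$ from \eqref{epsregc}. The only difference is that you spell out the inclusion $B_1(p)\subset B_{|p|-1}(0)^c$ and the choice of $R_1$ in slightly more detail.
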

\begin{proof}
 Choose $R_1>R_0$ so that $T(R_1)<\frac{\epsilon}{2},$ $\epsilon$ as given in Lemma \ref{epreg}. Then applying \eqref{epsregc} with $\rho = 1,$ 
we see that for $p\in B_{R_1+1}(0)^c$, $|F_A|^2(p)\leq CT(|p|-1).$ Since $T(R)\to 0$ as $R\to \infty$ the result follows. 
\end{proof}

We will have frequent need to refer to Moser iteration in our estimates. For the convenience of the reader, we reproduce (with minor notational modifications and corrections) from 
\cite[Proposition 2]{First} a convenient formulation of this elliptic estimate. 
\begin{proposition}[Moser iteration]\label{whorse}
Let $(M,g)$ be a smooth complete Riemannian manifold of dimension \(n\) with bounded geometry. Let $\delta(M)$ denote the injectivity radius of $M$. Let $ S_M>0$ satisfy  for all $ p\in M$, $ R<\frac{\delta(M)}{2} $, and all  $ \xi\in C_c^\infty(B_{R}(p))$,  
\begin{align}\label{sob3}S_M \|d \xi\|^2_{L^2(B_R(p))}\geq \| \xi\|_{L^{\frac{2n}{n-2}}(B_R(p))}^2.
\end{align}
Let  $ R_1<R_2<\frac{\delta(M)}{2} $.
 Suppose $f$ is a nonnegative function satisfying 
\begin{align}\label{seed}\Delta f\leq w^2f,\end{align}
 for  a nonnegative function  $w\in L^{\infty}(B_{R_2}(p))$. Set 
 $W:=(R_2-R_1)\|   w\|_{L^{\infty}(B_{R_2}(p))}.$  
Then 
\begin{align}\label{moserdoneii}
\|f\|_{L^{\infty}(B_{R_1}(p))}\leq  
 (R_2-R_1)^{-n} S_M^{\frac{n}{2}}( 1 + W^2)^{\frac{n}{2}}\frac{ 2^{\frac{n^2+2}{2}}  }{(1-(\frac{1}{4})^{\frac{1}{n}})^{n}}  \| f \|_{L^1(B_{R_2}(p))}.
\end{align}
\end{proposition}
Specializing to $f= |F_A|^2$ gives  the primary estimate behind the proof of Lemma \ref{epreg}.
\begin{lemma}\label{moseri}
Let $B_L(p)\subset B_{R_0}(0)^c$. There exists $C_2>0$ such that  $\forall\delta\in (0,1],$ we have 
\begin{align}
\|F_A\|^2_{L^\infty(B_{(1-\delta)L}(p))}\leq \frac{C_2}{\delta^3}(\frac{1}{L^2}+\delta^2\|F_A\|_{L^\infty(B_{L}(p))})^{\frac{3}{2}}\|F_A\|^2_{L^2(B_{L}(p))}.
\end{align}
\end{lemma}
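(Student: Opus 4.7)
The plan is to derive a Bochner-type pointwise inequality for $u := |F_A|^2$ on $\IR^3$ and then run a standard Moser iteration, carefully balancing the shrinking-ball scale $\delta' \to \delta$ against the Sobolev exponent gain so that the final constant assumes the precise form claimed.

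First I would establish the differential inequality. Using $F_A = \ast d_A\Phi$, the Bianchi identity, and the Weitzenb\"ock formula for bundle-valued 2-forms on flat $\IR^3$, a short calculation yields the pointwise estimate $\Delta |F_A|^2 \geq 2|\nabla_A F_A|^2 - C|F_A|^3$, where $\Delta$ is the Laplacian normalized as in the introduction. Alternatively, lifting the monopole to the $\IZ$-periodic instanton on $\IR^3\times S^1$ as in the remark preceding Lemma \ref{epreg}, the standard Yang--Mills Weitzenb\"ock on the lift gives the same estimate, and since the integrand is $\theta$-independent it descends to $\IR^3$. Kato's inequality then yields the weak subharmonic-type inequality $\Delta u + C|F_A|\,u \geq 0$.

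Next, a single Caccioppoli plus Sobolev step. Fix $q\geq 1$ and a Lipschitz cutoff $\varphi$ with $\varphi \equiv 1$ on $B_{(1-\delta')L}(p)$, $\operatorname{supp}\varphi \subset B_L(p)$, and $|\nabla\varphi|\leq 2/(\delta' L)$. Testing the differential inequality against $\varphi^2 u^{q-1}$ and integrating by parts, after absorbing the good gradient term, gives
\begin{equation*}
\int |\nabla (\varphi u^{q/2})|^2 \,dv \leq C q^2 \bigl((\delta' L)^{-2} + \|F_A\|_{L^\infty(B_L(p))}\bigr) \int \varphi^2 u^q\,dv.
\end{equation*}
The three-dimensional Sobolev embedding $H^1(\IR^3)\hookrightarrow L^6(\IR^3)$ then converts this into a reverse-H\"older step raising the integrability exponent from $q$ to $3q$ at the cost of shrinking the ball from $B_L$ to $B_{(1-\delta')L}$.

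Finally, I would iterate. Take $q_k = 3^k$ and $\delta'_k = \delta \cdot 2^{-k-1}$, so that $\sum_k \delta'_k = \delta$ and the associated telescoping radii decrease from $L$ to $(1-\delta)L$. Composing the reverse-H\"older steps between consecutive balls gives
\begin{equation*}
\|u\|_{L^\infty(B_{(1-\delta)L}(p))} \leq \Bigl(\prod_{k\geq 0} \bigl(C q_k^2 ((\delta'_k L)^{-2} + \|F_A\|_{L^\infty(B_L(p))})\bigr)^{3^{-k}}\Bigr) \|u\|_{L^1(B_L(p))}.
\end{equation*}
The $Cq_k^2$ factors and the $4^{k+1}$ factors (arising from $(\delta'_k L)^{-2} = 4^{k+1}(\delta L)^{-2}$) each contribute convergent products, while the remainder equals $((\delta L)^{-2} + \|F_A\|_\infty)^{\sum_k 3^{-k}} = ((\delta L)^{-2} + \|F_A\|_\infty)^{3/2}$. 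The algebraic identity $((\delta L)^{-2} + V)^{3/2} = \delta^{-3}(L^{-2} + \delta^2 V)^{3/2}$, together with $\|u\|_{L^1(B_L)} = \|F_A\|^2_{L^2(B_L)}$, then delivers exactly the stated form. The main technical obstacle is the bookkeeping in this last step: the sequences $(\delta'_k)$ and $(q_k)$ must be chosen so that the three infinite products (from $Cq_k^2$, from the accumulated $4^k$ cutoff penalties, and from the Sobolev-induced exponent $\sum 3^{-k} = 3/2$) converge simultaneously and rearrange into the precise factor $\delta^{-3}(L^{-2}+\delta^2\|F_A\|_\infty)^{3/2}$, rather than the easier but cruder $\delta^{-3}(L^{-2}+\|F_A\|_\infty)^{3/2}$.
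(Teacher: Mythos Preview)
Your proposal is correct and follows exactly the approach the paper indicates: the paper does not give a proof, stating only that it ``is standard and we will not repeat it here'' with a reference to \cite[Proposition 2]{First}, and explicitly notes that the Bochner formula \eqref{boch-1} ``is used infinitely many times in the proof of Lemma \ref{moseri}.'' Your outline---Bochner inequality, Caccioppoli plus 3D Sobolev, geometric iteration with $q_k=3^k$ and dyadically shrinking cutoffs, and the final algebraic identity $((\delta L)^{-2}+V)^{3/2}=\delta^{-3}(L^{-2}+\delta^2 V)^{3/2}$---is precisely this standard argument.
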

\begin{corollary}\label{goodcoro}For $r_2>r_1>R_0$, 
\begin{align}S(r_2)\leq  C_2 (\frac{1}{(r_2-r_1)^2}+ S(r_1))^{\frac{3}{4}}\sqrt{T(r_1)}.\end{align}
In particular, if $\frac{1}{(r_2-r_1)^2}\leq S(r_1),$ then we have 
\begin{align}\label{oddineq}S(r_2)\leq 2C_2S(r_1)^{\frac{3}{4}} \sqrt{T(r_1}).\end{align}
Iterating \eqref{oddineq} gives for a sequence $\{r_j\}_{j=1}^{n+1}$ with $r_j<r_{j+1}$ and $\frac{1}{(r_{j+1}-r_j)^2}\leq S(r_j),$ 
\begin{align}\label{oddineq2}S(r_{n+1})\leq   C_3S(r_1)^{(\frac{3}{4})^{n}} T(r_1)^{2-2(\frac{3}{4})^{n}} .\end{align}
If $S(R)\geq \frac{4n^2}{R^2}$, the sequence $r_j= \frac{R}{2}+\frac{(j-1)R}{2n}$ satisfies the hypotheses of \eqref{oddineq2}, and we have 
\begin{align}\label{oddineq3}S(R)\leq   C_3S(\frac{R}{2})^{(\frac{3}{4})^{n}} T(\frac{R}{2})^{2-2(\frac{3}{4})^{n}} .\end{align}
\end{corollary}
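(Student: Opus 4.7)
The plan is to apply Lemma \ref{moseri} to suitable balls centered at points $p$ with $|p|\ge r_2$, then iterate the resulting recursion on $\{S(r_j)\}$ while keeping careful track of exponents.

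First I would fix $p$ with $|p|\ge r_2$ and apply Lemma \ref{moseri} to the ball $B_L(p)$ with $L=|p|-r_1\ge r_2-r_1$, taking $\delta=\tfrac12$. Since $B_L(p)\subset B_{r_1}(0)^c$, the $L^\infty$ and $L^2$ norms of $F_A$ on $B_L(p)$ are controlled respectively by $S(r_1)$ and $T(r_1)^{1/2}$. Using $\frac{1}{L^2}\le\frac{1}{(r_2-r_1)^2}$ to weaken the bound uniformly in $p$, then extracting square roots and taking the supremum over $p$ with $|p|\ge r_2$, yields the first claimed inequality after absorbing numerical constants into a renamed $C_2$. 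The second inequality is immediate, since under the gap hypothesis $\frac{1}{(r_2-r_1)^2}\le S(r_1)$ one has $\bigl(\frac{1}{(r_2-r_1)^2}+S(r_1)\bigr)^{3/4}\le 2^{3/4}S(r_1)^{3/4}\le 2\,S(r_1)^{3/4}$.

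Next I would iterate this recursion. Writing $a_j:=S(r_j)$ and $t:=T(r_1)$, and using $T(r_j)\le t$ by monotonicity, the gap condition at each step allows applying \eqref{oddineq} to conclude $a_{j+1}\le (2C_2)\,a_j^{3/4}\,t^{1/2}$. A straightforward induction then yields
$$a_{n+1}\le (2C_2)^{\sum_{j=0}^{n-1}(3/4)^j}\,a_1^{(3/4)^n}\,t^{(1/2)\sum_{j=0}^{n-1}(3/4)^j},$$
and the geometric sum $\sum_{j=0}^{n-1}(3/4)^j = 4(1-(3/4)^n)\le 4$ produces \eqref{oddineq2} upon setting $C_3:=\max\bigl((2C_2)^4,1\bigr)$.

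For the final claim, the arithmetic progression $r_j=R/2+(j-1)R/(2n)$ has $r_{j+1}-r_j=R/(2n)$ and $r_j\le R$ for each $j\le n+1$. Hence $S(r_j)\ge S(R)\ge 4n^2/R^2=1/(r_{j+1}-r_j)^2$, so the gap condition propagates through all $n$ iterations and \eqref{oddineq2} applies with $r_1=R/2$, $r_{n+1}=R$ to give \eqref{oddineq3}. The only genuinely delicate point in the whole argument is ensuring that the gap condition holds at every step of the iteration; this is guaranteed precisely by the monotonicity of $S$ together with the hypothesis $S(R)\ge 4n^2/R^2$, and with that in hand the remainder is a routine bookkeeping exercise on the geometric series that appears in the exponents.
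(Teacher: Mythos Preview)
Your proof is correct and follows exactly the route the paper intends: the paper leaves the corollary unproved, treating it as an immediate consequence of Lemma~\ref{moseri}, and your argument supplies precisely those details---apply Moser iteration on the ball $B_{|p|-r_1}(p)\subset B_{r_1}(0)^c$, take the supremum over $|p|\ge r_2$, then iterate using monotonicity of $S$ and $T$. The bookkeeping on the geometric series and the verification of the gap condition via $S(r_j)\ge S(R)$ are handled correctly.
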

The proof of Lemma \ref{moseri} is standard, and we will not repeat it here. (See for example \cite[Proposition 2]{First}.) 

The Bianchi identity and \eqref{mono1} imply the Bochner formula (which is used infinitely many times in the proof of Lemma \ref{moseri}) :
\begin{align}\label{boch-1}
 \Delta\frac{1}{2}|F_A|^2 = -|\nabla F_A|^2-|[\Phi,F_A]|^2+3\langle[F_{jk},F_{km}],F_{jm}\rangle .
\end{align}
Let $\{w_{a}\}_a$ be a local unitary $\Phi$-eigenframe for $E$, with $\Phi w_a = i\lambda_a w_a$. Expand $F_A$ in this frame as 
$F_A = F_a^bw^b\otimes w_a^\dagger.$ With this notation, \eqref{boch-1} becomes
\begin{align}\label{boch-0}
 \Delta\frac{1}{2}|F_A|^2 = -|\nabla F_A|^2-(\lambda_a-\lambda_b)^2|  F_a^b|^2+3\langle[F_{jk},F_{km}],F_{jm}\rangle .
\end{align}
 Observe that we can use \eqref{boch-1} to obtain information about  the relative size of different components of $F$. Let $\eta $ be a smooth compactly supported function\label{page:eta} with $\eta(t) =1$ for $|t|\leq \frac{1}{2}$, $\eta(t) =0$ for $|t|\geq 1$, and $|d\eta|\leq 4$. For $R_2>R_1>R_0$, set $\eta_{R_1,R_2}(r):= \eta(\frac{1}{2}+\max\{0,\frac{r-R_1}{2(R_2-R_1)}\})$. Then integrating \eqref{boch-0} against $(1-\eta_{R_1,R_2})^2$, for $R>2R_0$ yields 
\begin{align}\label{offdiag1}
&\int_{B_{R_1}(0)^c}(|\nabla ((1-\eta_{R_1,R_2})F_A)|^2 +(1-\eta_{R_1,R_2})^2(\lambda_a-\lambda_b)^2| F_a^b|^2)dv\nonumber\\
&= \int_{B_{R_1}(0)^c}(|d\eta_{R_1,R_2}|^2|F_A|^2+3(1-\eta_{R_1,R_2})^2\langle[F_{jk},F_{km}],F_{jm}\rangle)dv\nonumber\\
&\leq  64(\frac{1}{(R_2-R_1)^2}+S(R_1))T(R_1).
\end{align}  
This suggests that the off block diagonal components of $F_A$ (in a $\Phi$-eigenframe) are smaller than the block diagonal components. When all the eigenvalues of $\Phi$ are distinct, the block diagonal components of $F_A$ become simply diagonal, and therefore the cubic term $\langle[F_{jk},F_{km}],F_{jm}\rangle$ should decay more rapidly than $O(|F_A|^3)$.  This is the primary reason  why  the maximal symmetry breaking case is easier to analyze.

\section{Analyzing \texorpdfstring{$\Phi$}{Phi}}
We use spectral projections associated with $\Phi$ in later estimates. In order to control these, we next show that the spectrum of $\Phi(p)$ converges as $|p|\to \infty$. 
For $y\in\IR^3,$ set 
$$r_y(x):= |x-y|, \text{ and }r(x) := |x|.$$

\begin{proposition}\label{togap}
Let $p,q\in B_R(0)^c$, $R>4R_0$. Then for all $m,n\in \IN$, $\exists C_{m,n,A}>0$ such that 
\begin{align}\label{pinch}
|\tr \Phi^m(p) -\tr \Phi^m(q)|\leq C_{m,n,A}(\frac{1}{R}+T(\frac{R}{4})^{2-2(\frac{3}{4})^{n-1}}).
\end{align}
\end{proposition}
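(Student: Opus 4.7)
\medskip

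\noindent\textbf{Proof proposal for Proposition \ref{togap}.} The plan is to derive the bound from three ingredients: pointwise identities for $\tr\Phi^m$ and its differential coming from the monopole equation \eqref{mono1} and the Bianchi identity, convergence of the spherical mean of $\tr\Phi^m$ at rate $O(1/r)$, and Moser iteration (Corollary \ref{goodcoro}) to control $S$ on large annuli.

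First, the key algebraic identities. Since $d_A\Phi=\ast F_A$ and $d_AF_A=0$, one has $\Delta_A\Phi = d_A^\ast d_A\Phi = \pm\ast d_AF_A = 0$. Using cyclicity of the trace and the Leibniz rule, I would compute
\begin{align*}
d\tr\Phi^m &= m\,\tr(\Phi^{m-1}\,d_A\Phi) = m\,\tr(\Phi^{m-1}\ast F_A),\\
\Delta\tr\Phi^m &= m(m-1)\,\tr\Bigl(\Phi^{m-2}\sum_i(\nabla_i\Phi)(\nabla_i\Phi)\Bigr),
\end{align*}
so that $|d\tr\Phi^m|\leq C_m(\sup|\Phi|)^{m-1}|F_A|$ and $|\Delta\tr\Phi^m|\leq C_m(\sup|\Phi|)^{m-2}|F_A|^2$, using $|\nabla_i\Phi|\le|F_A|$ pointwise.

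Second, I would analyze the spherical mean $\bar f(r):=\frac{1}{4\pi r^2}\int_{S_r}\tr\Phi^m\, dA$. The radial ODE $(r^2\bar f\,')' = r^2\bar g$ with $\bar g$ the spherical average of $\Delta\tr\Phi^m$ integrates to $r^2\bar f\,'(r) = \frac{1}{4\pi}\int_{B_r\setminus B_{R_0}}\Delta\tr\Phi^m\,dv + C_0$. Because $|\Delta\tr\Phi^m|\in L^1$ with total mass $\leq C_m T(R_0) <\infty$, it follows that $\bar f(r)\to c_\infty$ and $|\bar f(r_1)-\bar f(r_2)|\leq C_m/\min(r_1,r_2)$ for $r_1,r_2>R_0$; this provides the $1/R$ term in the proposition.

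Third, I would bound the pointwise oscillation of $f=\tr\Phi^m$ on a sphere $S_{|p|}$ (and similarly on $S_{|q|}$). Connecting $p$ to a reference point by a great-circle arc of length $\leq\pi|p|$ and using the gradient bound gives $|f(p)-\bar f(|p|)| \le C_m|p|\cdot\|F_A\|_{L^\infty(S_{|p|})} \leq C_m|p|\,S(|p|/2)$. I would then apply Corollary \ref{goodcoro} with $n-1$ iterations of Moser along the radii $r_1=R/4,\,r_2,\ldots,r_n=R/2$ chosen with $r_{j+1}-r_j=R/(4(n-1))$, splitting into the cases $S(r_j)\geq 4(n-1)^2/R^2$ (where \eqref{oddineq3} applies) and its complement (where the bound is immediate). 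Using that $\sup|\Phi|$ and hence $S(R_0)$ are bounded, this produces
$$S(R/2)\le C_{n,A}\Bigl(\tfrac{1}{R^2} + T(R/4)^{\,2-2(3/4)^{n-1}}\Bigr).$$
Combining with the triangle inequality
$|f(p)-f(q)|\le |f(p)-\bar f(|p|)|+|\bar f(|p|)-\bar f(|q|)|+|\bar f(|q|)-f(q)|$
yields an estimate of the desired shape.

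\emph{Main obstacle.} A crude path-integral bound gives a term of the form $C_{m,n,A}\cdot R\cdot T(R/4)^{\,2-2(3/4)^{n-1}}$ rather than $C_{m,n,A}\cdot T(R/4)^{\,2-2(3/4)^{n-1}}$; absorbing the extra factor of $R$ into the stated bound is the delicate step. I expect this requires replacing the great-circle path-integral argument for the on-sphere oscillation by a representation of $f-c_\infty$ through the exterior Green's function on $B_{R_0}^c$, splitting the Newton potential of $\Delta\tr\Phi^m$ into near-field and far-field contributions at the scale $R/2$, and estimating the near-field by Cauchy--Schwarz against $|F_A|^2$ using $|F_A|^2\leq S(R/4)|F_A|$. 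The harmonic part of the decomposition contributes $O(1/R)$ by standard exterior expansion, while the Newton potential contribution is controlled by a product $S(R/4)^{a}\,T(R/4)^{b}$ with carefully chosen exponents so that inserting the sharp Moser bound above produces exactly the exponent $2-2(3/4)^{n-1}$ in the stated inequality.
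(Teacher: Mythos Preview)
Your diagnosis is accurate: the great-circle oscillation bound costs an extra factor of $R$, and the remedy is indeed to represent $\tr\Phi^m(p)-\tr\Phi^m(q)$ directly via the Green's kernel $\frac{1}{4\pi}\bigl(\frac{1}{r_p}-\frac{1}{r_q}\bigr)$ against $\Delta\tr\Phi^m$ on $B_L(0)^c$ --- this is exactly what the paper does, so your ``Main obstacle'' paragraph is already converging to the right method. The $S_L(0)$ boundary term is $O(L^2/R)$, and the far-field contribution (outside $B_{|p|/2}(p)$) is $O(1/R)$ since $\frac{1}{r_p}\leq\frac{2}{|p|}$ there and $|F_A|^2\in L^1$.

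The genuine gap is in the near-field integral $\int_{B_{|p|/2}(p)}\frac{1}{r_p}|F_A|^2\,dv$. Your suggested route --- $|F_A|^2\leq S\,|F_A|$ followed by Cauchy--Schwarz --- does not eliminate the $R$-dependence: pairing against $\bigl(\int\frac{1}{r_p^2}\bigr)^{1/2}$ on a ball of radius $\sim|p|$ still produces a factor $|p|^{1/2}$. The paper's device, which is the key step you are missing, is to split the near-field at the \emph{data-dependent} radius $\rho=1/T(|p|/4)$ (when $T(|p|/4)>2/|p|$; otherwise the near-field is already $O(1/R)$). On $B_\rho(p)$ one uses the pointwise bound $|F_A|^2\leq S(|p|/2)^2$ together with $\int_{B_\rho}\frac{1}{r_p}\,dv=2\pi\rho^2$, giving $2\pi S^2/T^2$; on the annulus $B_{|p|/2}(p)\setminus B_\rho(p)$ one uses $\frac{1}{r_p}\leq T$ and $\int|F_A|^2\leq T$, giving $T^2$. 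Inserting the Moser bound $S(|p|/2)\leq C\,T(|p|/4)^{\,2-2(3/4)^{n-1}}$ from \eqref{oddineq3} (with $S$ globally bounded) yields $S^2/T^2\leq C\,T^{\,2-4(3/4)^{n-1}}$, which after re-indexing in $n$ gives the stated form. (Aside: your expression $\Delta\tr\Phi^m=m(m-1)\tr\bigl(\Phi^{m-2}\sum_i(\nabla_i\Phi)^2\bigr)$ is not literally correct since $\Phi$ and $\nabla_i\Phi$ need not commute; the true expression is a sum of $\tr\bigl(\Phi^a(\nabla_i\Phi)\Phi^b(\nabla_i\Phi)\bigr)$ over $a+b=m-2$. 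This does not affect the crude bound $|\Delta\tr\Phi^m|\leq c_{m,A}|F_A|^2$, so it is harmless here.)
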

\begin{proof}
Let $\rho>|p|,|q|>2L>2R_0$. Compute 
\begin{align}
&\int_{L<|x|<\rho}(\frac{1}{r_p}-\frac{1}{r_q})(-\Delta )\tr \Phi^m dv= 4\pi\tr \Phi^m(q)-4\pi\tr \Phi^m(p)\nonumber\\
& + \int_{S_\rho(0)}\left((\frac{1}{r_p}-\frac{1}{r_q})\frac{\p}{\p r}\tr \Phi^m +\tr \Phi^m (\frac{\rho^2-\langle p,x\rangle}{r_p^3\rho}  -\frac{\rho^2-\langle q,x\rangle}{r_q^3\rho}) \right)d\sigma \nonumber\\
&-\int_{S_L(0)}\left((\frac{1}{r_p}-\frac{1}{r_q})\frac{\p}{\p r}\tr \Phi^m +\tr \Phi^m (\frac{L^2-\langle p,x\rangle}{r_p^3L} 
 -\frac{L^2-\langle q,x\rangle}{r_q^3L})\right)d\sigma.
\end{align}
Take the limit as $\rho\to \infty$ to get 
\begin{align}
&4\pi\tr \Phi^m(p)-4\pi\tr \Phi^m(q)) = \int_{B_L(0)^c}(\frac{1}{r_p}-\frac{1}{r_q})\Delta \tr \Phi^m dv \nonumber\\
&-\int_{S_L(0)}\left((\frac{1}{r_p}-\frac{1}{r_q})\frac{\p}{\p r}\tr \Phi^m +\tr \Phi^m (\frac{L^2-\langle p,x\rangle}{r_p^3L}  -\frac{L^2-\langle q,x\rangle}{r_q^3L})\right)d\sigma.
\end{align}
The $S_L(0)$ integral is $O(\frac{L^2 }{R});$ so, we are left to estimate the  $B_L(0)^c$ integral. We use the crude estimate $|\Delta \tr \Phi^m|\leq c_{m,A}|F_A|^2,$ for some $c_{m,A}>0$. It suffices to  estimate the integral of 
$\frac{1}{r_p}|\Delta \tr \Phi^m|$, as the estimate of the other term is identical.  
Writing $B_L(0)^c= B_{\frac{|p|}{2}}(p)\cup \left(B_L(0)^c\setminus B_{\frac{|p|}{2}}(p)\right),$ we have 
$$\int_{B_L(0)^c}|\frac{1}{r_p} \Delta \tr \Phi^m| dv  
\leq c_{m,A}\int_{B_{\frac{|p|}{2}}(p)}\frac{1}{r_p}|F_A|^2dv+O(\frac{1}{R}).$$
If  $T(\frac{|p|}{4})>\frac{2}{|p|}$ and $S(\frac{|p|}{2})>\frac{16(n-1)^2}{|p|^2}$, we decompose $B_{\frac{|p|}{2}}(p)$ as a union $B_{\frac{1}{T(\frac{|p|}{4})}}(p)\cup \left(B_{\frac{|p|}{2}}(p)\setminus B_{\frac{1}{T(\frac{|p|}{4})}}(p)\right).$ We estimate 
\begin{align}
\int_{B_{\frac{1}{T(\frac{|p|}{4})}}(p)}\frac{1}{r_p}|F_A|^2dv \leq \frac{2\pi S^2(\frac{|p|}{2})}{T^2(\frac{|p|}{4})}
\leq C_4 T^{2-4(\frac{3}{4})^{n-1}}(\frac{|p|}{4}),
\end{align}
and 
\begin{align}
\int_{B_{\frac{|p|}{2}}(p)\setminus B_{\frac{1}{T(\frac{|p|}{4})}}(p)}\frac{1}{r_p}|F_A|^2dv \leq CT(\frac{|p|}{4})^2,
\end{align}
by the definition and monotonicity of $T(r)$. If $T(\frac{|p|}{4})\leq\frac{2}{|p|}$ or $S(\frac{|p|}{2})\leq\frac{16(n-1)^2}{|p|^2}$, then we can easily estimate $\int_{B_{\frac{|p|}{2}}(p)}\frac{1}{r_p}|F_A|^2dv$ by $O(n^4R^{-1})$. 
\end{proof}
\begin{corollary}\label{spec0}
The limit as $R\to\infty$ of the spectrum of $\Phi(Ru)$ is well defined and independent of $u$ on the unit sphere. 
Denote this limit $\spec (\Phi(\infty)).$ Let $i\lambda \in \spec (\Phi(\infty))$. 
Let $\delta(\lambda):=\frac{1}{4}\min_{i\nu\in \spec ( \Phi(\infty))\setminus\{ i\lambda\}}|\lambda-\nu|$. There exists $R_{00}>R_0$ so that for $|p|>R_{00}$, $-i\Phi(p)$ has no eigenvalues in 
$[\lambda-2\delta(\lambda), \lambda-\frac{\delta(\lambda)}{2} ]\cup [\lambda+\frac{\delta(\lambda)}{2}, \lambda+2\delta(\lambda)]$. 
\end{corollary}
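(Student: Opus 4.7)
The plan is to deduce this corollary directly from Proposition \ref{togap}. First, note that for any fixed $n$ (say $n=2$) the right side of \eqref{pinch} tends to $0$ as $R\to\infty$: the $1/R$ term is obvious, and $T(R/4)^{2-2(3/4)^{n-1}}\to 0$ because $F_A\in L^2$ forces $T(R)\to 0$. Given two unit vectors $u_1,u_2$ and any $R>4R_0$, the points $p=Ru_1,\ q=Ru_2$ lie in $B_R(o)^c$ (up to an arbitrarily small enlargement of $R$), so \eqref{pinch} applies and gives $|\tr\Phi^m(p)-\tr\Phi^m(q)|\to 0$ uniformly in $u_1,u_2$. In particular $\tr\Phi^m(p)$ is Cauchy as $|p|\to\infty$, and the limit $\mu_m:=\lim_{|p|\to\infty}\tr\Phi^m(p)$ exists for every $m\in\IN$ and is independent of the direction of approach.

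Next I would pass from convergence of the power sums $\tr\Phi^m(p)$ to convergence of the characteristic polynomials $\chi_{\Phi(p)}(t)$. By Newton's identities, the elementary symmetric functions in the eigenvalues of $\Phi(p)$ are universal polynomials in $\tr\Phi(p),\ldots,\tr\Phi^{\rk E}(p)$, and these are (up to signs) the coefficients of $\chi_{\Phi(p)}$. Since $\Phi$ is a bounded skew-hermitian endomorphism, all eigenvalues of $\Phi(p)$ stay in a fixed bounded set, so the limits $\mu_1,\ldots,\mu_{\rk E}$ determine a monic polynomial $\chi_\infty\in\IC[t]$ of degree $\rk E$, and $\chi_{\Phi(p)}\to\chi_\infty$ coefficientwise. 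Define $i\cdot\text{spec}(\Phi(\infty))$ as the multiset of roots of $\chi_\infty$; this is the limit asserted in the corollary.

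For the gap statement I would invoke the standard continuous dependence of roots of a monic polynomial on its coefficients (proved, for example, by applying Rouch\'e's theorem on small disks around each root of $\chi_\infty$, using that these disks are disjoint for disks of radius less than $\delta(\lambda)/2$). Coefficient convergence $\chi_{\Phi(p)}\to\chi_\infty$ then yields that, for $|p|$ large enough, every eigenvalue of $-i\Phi(p)$ lies within distance $\delta(\lambda)/2$ of some $\lambda\in\text{spec}(\Phi(\infty))$. By the definition of $\delta(\lambda)$ as a quarter of the minimum gap, no eigenvalue of $-i\Phi(p)$ can lie in the annular intervals $[\lambda-2\delta(\lambda),\lambda-\delta(\lambda)/2]\cup[\lambda+\delta(\lambda)/2,\lambda+2\delta(\lambda)]$, which is the desired spectral gap. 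Choose $R_{00}$ to be any radius beyond which this holds for every $\lambda\in\text{spec}(\Phi(\infty))$ (a finite set).

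The conceptual content here is entirely in Proposition \ref{togap}; everything remaining is bookkeeping. The only mildly delicate point is ensuring we track multiplicities, so that Rouch\'e gives the right count of eigenvalues clustered near each limiting value rather than allowing some to escape; this is automatic once we observe that $\chi_{\Phi(p)}$ and $\chi_\infty$ have the same (fixed) degree $\rk E$.
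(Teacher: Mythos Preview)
Your proposal is correct and is exactly the argument the paper has in mind; the corollary is stated without proof because it follows immediately from Proposition~\ref{togap} via Newton's identities and continuous dependence of roots on coefficients, precisely as you outline. The only cosmetic remark is that the boundedness of $\Phi$ you invoke is indeed one of the standing assumptions of this section (see the remark preceding Lemma~\ref{epreg}), so there is no gap there.
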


\section{Spectral truncation}
 In this and subsequent sections, most computations and many definitions require that we only consider points in $B_{R_{00}}(0)^c$, $R_{00}$ as introduced in Corollary \ref{spec0}. To avoid endless repetitions of this and similar constraints, our convention henceforth is that quantities and definitions are considered  outside a sufficiently large ball, as needed.   
Let $i\lambda\in \spec (\Phi(\infty)).$ Let $\{i\lambda_a(x)\}_a$ denote the eigenvalues of $\Phi(x)$. Set 
$$P_{\lambda}:= \frac{1}{2\pi i}\int_{|z| = \delta(\lambda)}(z+i\lambda -\Phi)^{-1}dz,$$
and
$$\Phi_{\lambda}:= \frac{1}{2\pi i}\int_{|z| = \delta(\lambda)}z(z+i\lambda -\Phi)^{-1}dz=P_\lambda(\Phi -\lambda I)P_\lambda.$$
Set 
$$F^\lambda: = P_\lambda F P_\lambda, \text{  }F^D:= \sum_\lambda F^\lambda, \text{ and }F^B:= F_A-F^D.$$
$$T^\lambda(R):= \int_{B_R(0)^c}|F^\lambda|^2dv,$$
$$T^D(R):= \sum_{i\lambda\in \spec (\Phi(\infty))} T^\lambda(R),$$
and 
$$T^B(R):= T(R) - T^D(R).$$ 
Let $\delta_{min}:=\min\{\delta(\lambda):i\lambda\in \spec (\Phi(\infty))\}.$ Then for $R_2>R_1$, \eqref{offdiag1} yields
\begin{align}\label{offdiag1.5}
 T^B(R_2)&\leq  \delta_{min}^{-2}64(\frac{1}{(R_2-R_1)^2}+S(R_1))T(R_1).
\end{align}  
For every $n\in \IN$, combining \eqref{offdiag1}, {\eqref{oddineq3}}, and Corollary \ref{spec0} shows that for $R_2>R_1>\frac{3R_2}{4}$, %and $\frac{R_2}{2}>R_{00}$, 
there exists  $C_n>0$ such that 
\begin{align}\label{offdiag2}
 T^B(R_2)&\leq     \delta_{min}^{-2}C_n(\frac{1}{ (R_2-R_1)^2}+S(\frac{R_2}{2})^{(\frac{3}{4})^{n}}T(\frac{R_2}{2})^{2-2(\frac{3}{4})^{n}})T(R_1).
\end{align}  
We compute 
\begin{align}
d_AP_\lambda = \left[\sum_{|\lambda - \lambda_b|<\delta}\frac{1}{i\lambda_b-i\lambda_a}-\sum_{|\lambda - \lambda_a|<\delta}\frac{1}{i\lambda_b-i\lambda_a}\right](\ast F_A)_a^be_b\otimes e_a^\dagger.
\end{align}
 
\begin{align}\label{lambdamon}
d_A\Phi_\lambda &= \frac{1}{2\pi i}\int_{|z| = \delta(\lambda)}z(z+i\lambda -\Phi)^{-1}d_A\Phi (z+i\lambda -\Phi)^{-1}dz\nonumber\\
&= \ast F^\lambda +\sum_{\substack{|\lambda - \lambda_a|<\delta(\lambda) \\ |\lambda - \lambda_b|>\delta(\lambda)}} \frac{(\lambda_a-\lambda)}{\lambda_a-\lambda_b} [(\ast F_A)_a^bw_b\otimes w_a^\dagger 
+ (\ast F_A)_b^aw_a\otimes w_b^\dagger ].
 \end{align}
\begin{align}
d\frac{1}{2}|\Phi_\lambda|^2 &= \Re \langle\Phi_\lambda,(\ast F_A^\lambda)(\frac{\p}{\p x^j})\rangle dx^j.
\end{align}
\begin{align}\label{metoo}
d_A^*d_A\Phi_\lambda &= - \frac{1}{\pi i}\int_{|z| = \delta(\lambda)}z(z+i\lambda -\Phi)^{-1}\nabla_j\Phi (z+i\lambda -\Phi)^{-1}\nabla_j\Phi(z+i\lambda -\Phi)^{-1}dz\nonumber\\
&= - \frac{1}{\pi i}\int_{|z| = \delta(\lambda)}\frac{z dz  (\ast F_A)(\p_j)_a^b (\ast F_A)(\p_j)_s^a e_b\otimes e_s^\dagger}{(z+i\lambda -i\lambda_b)(z+i\lambda -i\lambda_a) (z+i\lambda -i\lambda_s)} .
 \end{align}
From \eqref{metoo} it is easy to see that for some $c>0$, 
\begin{align}\label{metoo2}
|d_A^*d_A\Phi_\lambda |&\leq  \frac{c}{\delta^2(\lambda)}|F_A^B||F_A|.
\end{align}
In order to relate $T(R)$ to surface integrals, we first compute $\Delta\frac{1}{2}|\Phi_{\lambda}|^2. $
\begin{align}\label{trunclap}
\Delta \frac{1}{2}\tr  \Phi_\lambda^2 &= \Delta  \frac{1}{4\pi i}\int_{|z| = \delta(\lambda)}\tr  z^2(z+i\lambda -\Phi)^{-1}dz\nonumber\\
&= - \frac{1}{2\pi i}\int_{|z| = \delta(\lambda)}\tr  z^2(z+i\lambda -\Phi)^{-2}(\ast F_A)(\frac{\p}{\p x^j})(z+i\lambda -\Phi)^{-1}(\ast F_A(\frac{\p}{\p x^j})dz\nonumber\\
&=  \sum_{a,b}|(\ast F_A)_a^b|^2\frac{1}{2\pi i}\int_{|z| = \delta(\lambda)} z^2(z+i\lambda -i\lambda_a)^{-2}(z+i\lambda -i\lambda_b)^{-1}dz  \nonumber\\
&=   \sum_{a,b}|(\ast F_A)_a^b|^2\frac{1}{2\pi i}\int_{|z| = \delta(\lambda)} \frac{z}{i\lambda_a-i\lambda_b}[(z+i\lambda -i\lambda_a)^{-1}-(z+i\lambda -i\lambda_b)^{-1}]dz  \nonumber\\
&=  | F_A^\lambda|^2  - 2|(P_\lambda  F_A (I-P_\lambda))_{a}^b|^2 \frac{ (\lambda_a-\lambda)}{\lambda_a-\lambda_b}  .
\end{align}
For $R<L$, integrate \eqref{trunclap} to get 
\begin{align}\label{unwtint}
&\int_{B_L(0)\setminus B_R(0)}\left[|  F_A^\lambda|^2  - 2|(P_\lambda  F_A (I-P_\lambda))_{a}^b|^2 \frac{ (\lambda_a-\lambda)}{\lambda_a-\lambda_b}  \right]dv\nonumber\\
&=\int_{S_L(0)}\langle \Phi_\lambda, (\ast F)(\frac{\p}{\p r})\rangle d\sigma - \int_{S_R(0)}\langle \Phi_\lambda, (\ast F)(\frac{\p}{\p r})\rangle d\sigma .
\end{align}
Also consider the weighted integral 
\begin{align}\label{wtint}
&\int_{B_L(0)\setminus B_R(0)}\frac{1}{r}\left[|  F_A^\lambda|^2  - 2|(P_\lambda  F_A (I-P_\lambda))_{a}^b|^2 \frac{ (\lambda_a-\lambda)}{\lambda_a-\lambda_b}  \right]dv\nonumber\\
&=\int_{S_L(0)}[L^{-1}\langle \Phi_\lambda, (\ast F)(\frac{\p}{\p r})\rangle + L^{-2}\frac{1}{2}|\Phi_\lambda|^2]d\sigma \nonumber\\
&\quad- \int_{S_R(0)}[R^{-1}\langle \Phi_\lambda, (\ast F)(\frac{\p}{\p r})\rangle -R^{-2}\frac{1}{2}|\Phi_\lambda|^2]d\sigma.
\end{align}
We can choose a sequence $L=L_n\to\infty$, so that the $S_L$ integrals limit to zero in \eqref{wtint}, yielding
\begin{align}
&\int_{B_R(0)^c}\frac{1}{r}[|  F_A^\lambda|^2  - 2|(P_\lambda  F_A (I-P_\lambda))_{a}^b|^2 \frac{ (\lambda_a-\lambda)}{\lambda_a-\lambda_b}  ]dv\nonumber\\
&=  - R^{-1}\int_{S_R(0)}\langle \Phi_\lambda, (\ast F)(\frac{\p}{\p r})\rangle d\sigma 
-R^{-2}\int_{S_R(0)}\frac{1}{2}|\Phi_\lambda|^2d\sigma \label{unsimp}\\
& = -\frac{\p}{\p R}R^{-1}\int_{S_R(0)}\frac{1}{2}|\Phi_\lambda|^2d\sigma \label{limwtint}.
\end{align}
Integrate \eqref{limwtint} from $R_1$ to $R_2$, with $R_2>R_1$ to get 
\begin{align}
&\int_{B_{R_2}(0)^c}\frac{R_2-R_1}{r} \left[|  F_A^\lambda|^2  - 2|(P_\lambda  F_A (I-P_\lambda))_{a}^b|^2 \frac{ (\lambda_a-\lambda)}{\lambda_a-\lambda_b}  \right]dv\nonumber\\
&\quad +\int_{B_{R_2}(0)\cap B_{R_1}(0)^c}\frac{r-R_1}{r} \left[|  F_A^\lambda|^2  - 2|(P_\lambda  F_A (I-P_\lambda))_{a}^b|^2 \frac{ (\lambda_a-\lambda)}{\lambda_a-\lambda_b}  \right]dv\nonumber\\
& = R_1^{-1}\int_{S_{R_1}(0)}\frac{1}{2}|\Phi_\lambda|^2d\sigma - R_2^{-1}\int_{S_{R_2}(0)}\frac{1}{2}|\Phi_\lambda|^2d\sigma\label{limwtint2}.
\end{align}
Hence
$R^{-1}\int_{S_{R}(0)}\frac{1}{2}|\Phi_\lambda|^2d\sigma$ is bounded, as $R\to\infty$. 
Therefore, we can bound the integral 
$\int_{S_L(0)}\langle \Phi_\lambda, (\ast F)(\frac{\p}{\p r})\rangle d\sigma $ by  
$(L^{-1}\int_{S_L(0)} |\Phi_\lambda|^2d\sigma)^{\frac{1}{2}} (L\int_{S_L(0)} |F_A|^2 d\sigma )^{\frac{1}{2}}.$ Since $|F_A|^2$ is integrable, there exists  $L=L_n\to\infty$, with $\int_{S_{L_n}(0)}|F_A|^2d\sigma \leq \frac{1}{L_n\ln(L_n)}.$ This implies $\int_{S_{L_n}(0)}\langle \Phi_\lambda, (\ast F)(\frac{\p}{\p r})\rangle d\sigma \to 0$, and we can take the limit of \eqref{unwtint} to get 
\begin{align}\label{unwtintl}
&\int_{ B_R(0)^c}[|  F_A^\lambda|^2  - 2|(P_\lambda  F_A (I-P_\lambda))_{a}^b|^2 \frac{ (\lambda_a-\lambda)}{\lambda_a-\lambda_b}  ]dv\nonumber\\
&= - \int_{S_R(0)}\langle \Phi_\lambda, (\ast F)(\frac{\p}{\p r})\rangle d\sigma \nonumber\\
&= - R^2\frac{\p}{\p R}\left(R^{-2}\int_{S_R(0)}\frac{1}{2}| \Phi_\lambda|^2d\sigma\right).
\end{align}
Multiply \eqref{unwtintl} by $R^{-2}$ and integrate to get 
\begin{align}\label{unwtintl2}
& \int_{ B_R(0)^c}(1-\frac{R}{r})[|  F_A^\lambda|^2  - 2|(P_\lambda  F_A (I-P_\lambda))_{a}^b|^2 \frac{ (\lambda_a-\lambda)}{\lambda_a-\lambda_b}  ]dv\nonumber\\
&= R^{-1}\int_{S_R(0)}\frac{1}{2}| \Phi_\lambda|^2d\sigma.
\end{align}
Hence  
\begin{align}\label{unwtintlconsec} 
&   R^{-1}\int_{S_R(0)}\frac{1}{2}| \Phi_\lambda|^2d\sigma\leq  T^\lambda(R)-\int_{ B_R(0)^c} \frac{R}{r} |F_A^\lambda|^2dv+2T^B(R), 
\end{align}
and 
\begin{align}\label{unwtintlsum}
&   R^{-1}\int_{S_R(0)}\frac{1}{2}\sum_\lambda| \Phi_\lambda|^2d\sigma\leq  T^D(R)-\int_{ B_R(0)^c} \frac{R}{r} |F_A^D|^2dv+2T^B(R).
\end{align}

This then implies that 
\begin{align}\label{pitstop}
 |\int_{S_R(0)}\langle \Phi_\lambda,(\ast F_A)(\frac{\p}{\p r})\rangle d\sigma| &\leq  (R^{-1}\int_{S_R(0)}|\Phi_\lambda|^2 d\sigma)^{\frac{1}{2}}(R\int_{S_R(0)} | F_A^\lambda|^2d\sigma)^{\frac{1}{2}}\nonumber\\
&\leq  \frac{1}{2}R^{-1}\int_{S_R(0)}|\Phi_\lambda|^2 d\sigma+\frac{1}{2}R\int_{S_R(0)} | F_A^\lambda|^2d\sigma\nonumber\\
&= \frac{1}{2}R^{-1}\int_{S_R(0)}|\Phi_\lambda|^2 d\sigma-\frac{1}{2}R\frac{\p}{\p R}T^\lambda(R).
\end{align}
Combine \eqref{pitstop}, \eqref{unwtintlconsec}, and \eqref{unwtintl} and sum over $\lambda$  to obtain 
\begin{align}\label{sunwtintl}
&T^D(R)  - 2T^B(R)\nonumber\\
&\leq \frac{1}{2}R^{-1}\int_{S_R(0)}\sum_\lambda| \Phi_\lambda|^2 d\sigma-\frac{1}{2}R\frac{\p}{\p R}T^D(R)\nonumber\\
&\leq \frac{1}{2}T^D(R)-\frac{1}{2}\int_{ B_R(0)^c} \frac{R}{r} |F_A^D|^2dv+ T^B(R)-\frac{1}{2}R\frac{\p}{\p R}T^D(R).
\end{align}
Hence 
\begin{align}\label{dortmund}
&\frac{\p}{\p R}(RT^D(R)) \leq  6T^B(R)- \int_{ B_R(0)^c} \frac{R}{r} |F_A^D|^2dv,
\end{align}
and for all $\epsilon $, 
\begin{align}\label{bochum}
&\frac{\p}{\p R}( R^{1-\epsilon}T^D(R)) \leq  6R^{-\epsilon}T^B(R)- R^{-\epsilon}\int_{ B_R(0)^c} (\epsilon+\frac{R}{r}) |F_A^D|^2dv.
\end{align}

Hence, for a given $R$ and $\epsilon$, either $R^{1-\epsilon}T^D(R)$ is decreasing or the following inequality holds: 
\begin{align}
&\hspace*{-2.5cm}\!\!\!
\text{\bf Inequality  }  X(\epsilon,R) :  \hspace{1cm}   
\int_{B_R(0)^c}(\epsilon+\frac{R}{r}) |F_A^D|^2 dv <6T^B(R).\nonumber
\end{align}
%\begin{align}\label{eq:XeR}
%\tag{Ineq.~$X(\epsilon,R)$} \,\,\int_{B_R(0)^c}(\epsilon+\frac{R}{r}) |F_A^D|^2 dv <6T^B(R)
%.\end{align}

\section{Estimating \texorpdfstring{$T(R)$}{T(R)}}

\begin{proposition}\label{proppart}
For every $\epsilon>0$,  there is $A_\epsilon>0$ such that 
\begin{align}
T(R)\leq A_\epsilon R^{\epsilon - 1}.
\end{align}
\end{proposition}
\begin{proof}
Let $R$ be large. Then \eqref{offdiag2} gives the estimate 
\begin{align}\label{offdiag3}
 T^B(R)&\leq   4\delta_{min}^{-2}C_n (\frac{T(\frac{R}{2})}{ R^2}+ T(\frac{R}{2})^{3-2(\frac{3}{4})^{n}}).
\end{align}  
 If for some $ \epsilon\in(0,1)$, inequality $X(\epsilon,R)$ holds, then
\begin{align}\label{crudeX}
 T(R)& \leq   28\epsilon^{-1}\delta_{min}^{-2}C_n (\frac{1}{ R^2}+ T(\frac{R}{2})^{2-2(\frac{3}{4})^{n}})T(\frac{R}{2}).
\end{align}
Hence 
\begin{align}\label{crudeX2}
 R^{1-\epsilon}T(R)& \leq    28\epsilon^{-1}C_n 2^{1-\epsilon}\delta_{min}^{-2}(  \frac{1}{R^{2}}+ T(\frac{R}{2})^{2-2(\frac{3}{4})^{n}})(\frac{R}{2})^{1-\epsilon}T(\frac{R}{2}).
\end{align}
There exists $R_{n,\epsilon}$ so that for $R\geq R_{n,\epsilon},$   $\frac{28C_n 2^{1-\epsilon}}{\epsilon\delta_{min}^{2}}(  \frac{1}{  R^{2}}+
T(\frac{R}{2})^{2-2(\frac{3}{4})^{n}})\leq 1$, and therefore, if  inequality $X(\epsilon,R)$ holds for some $R\geq R_{n,\epsilon}$, we have 
\begin{align}\label{step1}
R^{1-\epsilon}T(R)& \leq   (\frac{R}{2})^{1-\epsilon}T(\frac{R}{2}).
\end{align} 
If inequality $X(\epsilon,\frac{R}{2})$ holds, we can iterate \eqref{step1}.  
If we can iterate until $\frac{R}{2^k}\leq R_{n,\epsilon}$, we obtain the desired estimate. So assume the iteration stops for some $\frac{R}{2^k}>R_{n,\epsilon}$. Thus, $X(\epsilon,\frac{R}{2^k})$ fails for $\frac{R}{2^k}>R_{n,\epsilon}$. %  (and $(\frac{R}{2^k})^{1-\epsilon}T(\frac{R}{2^k})>(\frac{R}{2^{k+1}})^{1-\epsilon}T(\frac{R}{2^{k+1}})$).
 Let $[a,\frac{R}{2^k}]$ denote the largest interval bounded above by $\frac{R}{2^k}$ on which $X(\epsilon,s)$ fails. Then  $s^{1-\epsilon}T^D(s)$ is monotonically decreasing and $T(s)\leq \frac{7+\epsilon}{6}T^D(s)$ on $[a,\frac{R}{2^k}]$. Hence, if $a\leq R_{n,\epsilon}$, we obtain the desired estimate. If $a>R_{n,\epsilon}$,  we have:
\begin{align}\label{dev1}
  R^{1-\epsilon}T(R)&\leq (\frac{R}{2^k})^{1-\epsilon}T(\frac{R}{2^k})\leq \frac{7+\epsilon}{6}a^{1-\epsilon}T^D(a) \leq (\frac{7+\epsilon}{6})^2a^{1-\epsilon}T(a) \nonumber\\
&\leq (\frac{7+\epsilon}{6})^2(\frac{a}{2})^{1-\epsilon}T(\frac{a}{2}).
\end{align}
For later use, we also note that for  $a>R_{n,\epsilon}$, the inequality $X(\epsilon,a)$ is saturated and therefore $T(a)\leq \frac{7}{\epsilon}T^B(a)$ giving the additional estimate 
\begin{align}\label{intermed}
T(R) \leq \frac{7}{\epsilon}(\frac{7+\epsilon}{6})^2 R^{\epsilon-1}a^{1-\epsilon}T^B(a).
\end{align}
We now iterate \eqref{dev1}, replacing $R$ with $\frac{a}{2}$. Unfortunately, each iteration introduces a factor of $(\frac{7+\epsilon}{6})^2$. Since $\frac{a}{2}\leq \frac{R}{2}$, we reach the interval $[\frac{R_{n,\epsilon}}{2},2R_{n,\epsilon}]$ after at most $\ln_2(R)$ iterations. Hence  there is a constant $c_{n,\epsilon}>0$ depending on $n$ and $\epsilon$ so that 
\begin{align}\label{corr1}
& R^{1-\epsilon}T(R)\leq  (\frac{7+\epsilon}{6})^{2\ln_2(R)} c_{n,\epsilon} = R^{2\ln_2(\frac{7+\epsilon}{6}) } c_{n,\epsilon}.
\end{align} 
Since $2\ln_2(\frac{7+\epsilon}{6}) %= \frac{2\ln (1+\frac{1+\epsilon}{6})}{\ln(2)}\leq \frac{   1+\epsilon }{3\ln(2)}
\leq  \frac{   1+\epsilon }{2},$ \eqref{corr1} yields for all large $R$,
\begin{align}\label{corr2}
& R^{\frac{1}{2}-\frac{3\epsilon}{4}}T(R)\leq     c_{n,\epsilon}.
\end{align}
Choosing $\epsilon=\frac{1}{9}$, we deduce $T(R)\leq c_{n,\epsilon}R^{-\frac{1}{3}}.$ Choosing $n\geq 3$, \eqref{offdiag3} implies $T^B(R) =  O(R^{-\frac{2}{3}}).$  Now apply \eqref{intermed} to   deduce that $T(R) = O(R^{-\frac{2}{3}}).$ In general, if we have $T(R) = O(R^{-p}),$ $p<1$, then by \eqref{offdiag3}, $T^B(R) = O(R^{-2p}).$ By \eqref{intermed}, $T(R) = O( R^{ -2p}+R^{\epsilon-1})$. Iterate until we deduce $T(R) = O(R^{\epsilon-1})$, as claimed.

%\begin{align}
%&(\frac{R}{2^{k+1}})^{1-\epsilon}T(\frac{R}{2^{k+1}})<(\frac{R}{2^k})^{1-\epsilon}T(\frac{R}{2^k})\leq a^{1-\epsilon}T^D(a) +(\frac{R}{2^k})^{1-\epsilon}T^B(\frac{R}{2^k})\nonumber\\
%&\leq a^{1-\epsilon}T^D(a) +(\frac{R}{2^k})^{1-\epsilon} 4\delta_{min}^{-2}C_n (\frac{4^k}{ R^2}+ T(\frac{R}{2^{k+1}})^{2-2(\frac{3}{4})^{n}})T(\frac{R}{2^{k+1}})\nonumber\\
%&\leq a^{1-\epsilon}T^D(a) + 2^{1-\epsilon}4\delta_{min}^{-2}C_n (\frac{4^k}{ R^2}+ T(\frac{R}{2^{k+1}})^{2-2(\frac{3}{4})^{n}})(\frac{R}{2^{k+1}})^{1-\epsilon}T(\frac{R}{2^{k+1}})\nonumber\\
%\end{align}

%Now we can extend the monotonicity relations for the discrete values $\frac{a}{2^j}$ until $X(\epsilon,R)$ fails for some $\frac{a}{2^j}>R_{n,\epsilon}$. Iterating, we deduce 
%$$R^{1-\epsilon}T(R) \leq   L^{1-\epsilon}T(L),$$
%for some $L\leq 2R_{n,\epsilon}$, and the proposition follows. 
\end{proof}

\begin{theorem}\label{goodtheorem}
For $\epsilon\in (0,1),$ there exist  $C_1,C_{2,\epsilon}>0$, such that $T(R)\leq C_1R^{-1},$ and $T^B(R)\leq C_{2,\epsilon}R^{-3+\epsilon}.$
\end{theorem}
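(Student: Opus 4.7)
The plan is to bootstrap from the near-critical decay $T(R) \leq C_\epsilon R^{\epsilon-1}$ supplied by Proposition \ref{proppart}. I would first feed this into the off-diagonal estimate \eqref{offdiag3} to upgrade the bound on $T^B$ to the strong rate $R^{-3+\epsilon}$, and then, since that rate is integrable at infinity, insert it into the differential inequality \eqref{dortmund} to force $RT^D(R)$ to be uniformly bounded. The bound on $T(R)$ itself will then follow immediately from the decomposition $T=T^D+T^B$.

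For the first step, given a target exponent $\epsilon\in(0,1)$, I would pick an auxiliary exponent $\epsilon_0>0$ much smaller than $\epsilon$ and apply Proposition \ref{proppart} with parameter $\epsilon_0$ to get $T(R/2)\leq C_{\epsilon_0}(R/2)^{\epsilon_0-1}$. Substituted into \eqref{offdiag3}, the linear term $T(R/2)/R^2$ is already $O(R^{\epsilon_0-3})$. The nonlinear term $T(R/2)^{3-2(3/4)^n}$ has exponent $(\epsilon_0-1)(3-2(3/4)^n)$; since $\epsilon_0-1<0$, taking $n$ sufficiently large makes $3-2(3/4)^n$ close enough to $3$ that this exponent is at most $-3+\epsilon$. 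Combining the two terms yields the second claimed inequality $T^B(R)\leq C_{2,\epsilon} R^{-3+\epsilon}$.

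For the second step, I would integrate \eqref{dortmund} from some fixed large $R_*$ to $R$, discarding the nonpositive second term on the right-hand side:
\begin{equation*}
RT^D(R) \;\leq\; R_*\,T^D(R_*) + 6\int_{R_*}^{R} T^B(s)\,ds \;\leq\; R_*\,T^D(R_*) + 6C_{2,\epsilon}\int_{R_*}^{\infty} s^{-3+\epsilon}\,ds,
\end{equation*}
which is finite as long as $\epsilon<2$. Hence $RT^D(R)$ is uniformly bounded, so $T^D(R) = O(R^{-1})$; together with the stronger rate already established for $T^B$, this gives $T(R)=T^D(R)+T^B(R) \leq C_1 R^{-1}$.

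The main obstacle is calibrating the bootstrap in the first step: to drive the exponent down to $-3+\epsilon$ one must couple the choice of auxiliary $\epsilon_0$ in Proposition \ref{proppart} with the choice of $n$ in \eqref{offdiag3}, arranging $(1-\epsilon_0)(3-2(3/4)^n)\geq 3-\epsilon$. This requires $\epsilon_0$ comparable to $\epsilon$ and $n$ large depending on $\epsilon$, but the constant $C_n$ is harmless because it is absorbed into $C_{2,\epsilon}$. Once $T^B$ is known to decay faster than the integrable rate $R^{-2}$, the integration of \eqref{dortmund} closes the argument with room to spare.
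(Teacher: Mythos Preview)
Your proposal is correct and follows essentially the same approach as the paper: first feed Proposition~\ref{proppart} into \eqref{offdiag3} to obtain $T^B(R)\leq C_{2,\epsilon}R^{-3+\epsilon}$, then drop the nonpositive term in \eqref{dortmund} and integrate to conclude that $RT^D(R)$, and hence $RT(R)$, is bounded. Your version simply spells out the calibration of $\epsilon_0$ and $n$ that the paper leaves implicit when it calls the $T^B$ bound an ``immediate consequence.''
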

\begin{proof}
The estimate for $T^B$ is an immediate consequence of  Proposition \ref{proppart} and \eqref{offdiag3}.
Combining this estimate with \eqref{dortmund} yields  
\begin{align}\label{nirvana}
\frac{d}{dR}(RT^D(R)) \leq 6C_{2,\epsilon}R^{\epsilon-3}.
\end{align}
Hence $RT^D(R)$ and $RT(R)$ are bounded. This implies $T(R)\leq C_1R^{-1},$ for some $C_1>0$. 
\end{proof}
\begin{corollary}
$$S(R)=O(R^{-2}).$$
\end{corollary}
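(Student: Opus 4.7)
The plan is to feed the newly established linear decay $T(R)\le C_1 R^{-1}$ from Theorem \ref{goodtheorem} into the Moser iteration bound \eqref{oddineq3} of Corollary \ref{goodcoro}. Taking $n=1$ there yields the conditional estimate $S(R)\le C_3\,S(R/2)^{3/4}\,T(R/2)^{1/2}$ whenever $S(R)\ge 4/R^2$, and combining with $T(R/2)\le 2C_1/R$ gives $S(R)\le K'\,R^{-1/2}\,S(R/2)^{3/4}$ for an absolute constant $K'$ depending only on $C_1$ and $C_3$.

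The next step is to introduce the scale-invariant quantity $\phi(R):=R^2 S(R)$. Writing $S(R/2)=4\phi(R/2)/R^2$ and substituting turns the previous estimate into the dimensionless recursion $\phi(R)\le K\,\phi(R/2)^{3/4}$, valid whenever $\phi(R)\ge 4$. When this proviso fails we automatically have $\phi(R)<4$, so unconditionally
$$\phi(R)\le \max\!\bigl(4,\ K\,\phi(R/2)^{3/4}\bigr).$$

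To finish the argument we iterate along the dyadic sequence $R_n:=2^n R_0$. Since the self-map $x\mapsto Kx^{3/4}$ has fixed point $K^4$, the interval $[0,M_0]$ with $M_0:=\max(4,\ K^4,\ \phi(R_0))$ is stable under $x\mapsto\max(4,Kx^{3/4})$. A one-line induction then gives $\phi(R_n)\le M_0$ for all $n\ge 0$, and the monotonicity of $S$ propagates the bound to arbitrary $R\in[R_n,R_{n+1})$, yielding $S(R)\le S(R_n)\le M_0/R_n^2\le 4M_0/R^2$, which is precisely the claim.

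The only mildly delicate point is the conditional form of \eqref{oddineq3}, but this is absorbed painlessly into the dichotomy above, and I do not anticipate a real obstacle: the corollary is essentially the $n=1$ case of the Moser iteration combined with the linear decay of $T(R)$, the exponent $3/4<1$ in the recursion being exactly what converts a qualitative recursive bound into a uniform bound on $\phi$.
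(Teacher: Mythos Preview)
Your proof is correct and follows essentially the same approach as the paper: both feed the bound $T(R)\le C_1R^{-1}$ into the Moser iteration to obtain a contractive recursion of the form $R^2S(R)\lesssim \bigl((R/2)^2S(R/2)\bigr)^{3/4}$ (modulo a harmless additive or max term), and then iterate dyadically to conclude. The only cosmetic differences are that the paper invokes Lemma \ref{moseri} directly to get the unconditional inequality \eqref{hahn1} rather than going through the conditional form \eqref{oddineq3}, and it terminates the iteration at a fixed scale instead of invoking the fixed point $K^4$; your dichotomy handles the condition on \eqref{oddineq3} correctly and the two terminations are equivalent.
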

\begin{proof}
Let $p\in S_R$, $R$ large. Choosing $\delta = 1$ and $L=\frac{R}{2}$ in  Lemma \ref{moseri}, we see that there exists $C>0$ such that 
\begin{align}\label{hahn1}
R^2S(R)\leq  C \max\{1,[(\frac{R}{2})^2S(\frac{R}{2})]^{\frac{3}{4}}\}.
\end{align}
Here we have used $RT(R)$ is bounded, $\forall R>R_0.$ Iterating this estimate, we have 
\begin{align}\label{hahn2}
R^2S(R)\leq  C^{\sum_{j=0}^k(\frac{3}{4})^j} \max\{1,[(\frac{R}{2^{k+1}})^2S(\frac{R}{2^{k+1}})]^{(\frac{3}{4})^k}\}.
\end{align} 
Terminate the iteration when the max on the right hand side is $1$ or when $\frac{R}{2^{k+1}}\leq 2R_{00}$ to obtain the claimed result. 
\end{proof}
\begin{corollary}
$$|\Phi_\lambda|= O(R^{-1}).$$
\end{corollary}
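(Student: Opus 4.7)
The approach is to convert the sphere integral bound implicit in \eqref{unwtintl2} into a pointwise bound via the approximate subharmonicity of $|\Phi_\lambda|^2$ and a standard mean value inequality. Since $\Phi$ is skew-Hermitian and $P_\lambda$ is the associated self-adjoint spectral projection, $\Phi_\lambda$ is itself skew-Hermitian and so $|\Phi_\lambda|^2=-\tr\Phi_\lambda^2$. Substituting into \eqref{trunclap} and using that the ratios $\frac{\lambda_a-\lambda}{\lambda_a-\lambda_b}$ are bounded in magnitude by Corollary \ref{spec0},
\[\Delta |\Phi_\lambda|^2 \geq -C|F_A^B|^2,\]
with right-hand side $O(R^{-4})$ by the previous corollary, since $|F_A^B|\leq|F_A|\leq S(R)=O(R^{-2})$.

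For the $L^2$ sphere bound, \eqref{unwtintl2} combined with $T(R)=O(R^{-1})$ and $T^B(R)=O(R^{-3+\epsilon})$ from Theorem \ref{goodtheorem} yields
\[R^{-1}\int_{S_R(0)}\tfrac{1}{2}|\Phi_\lambda|^2\,d\sigma \leq T(R)+CT^B(R)=O(R^{-1}),\]
hence $\int_{S_R(0)}|\Phi_\lambda|^2\,d\sigma=O(1)$. Integrating radially across the shell $r\in[|p|/2,3|p|/2]$ then gives
\[\int_{B_{|p|/2}(p)}|\Phi_\lambda|^2\,dv=O(|p|).\]

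Finally, the classical mean-value inequality for non-negative subsolutions of $\Delta u\geq -f$ on a ball,
\[u(p) \leq \frac{C}{\rho^3}\int_{B_\rho(p)}u\,dv+C\rho^2\|f\|_{L^\infty(B_\rho(p))},\]
applied with $u=|\Phi_\lambda|^2$, $\rho=|p|/2$, and $f=C|F_A^B|^2=O(|p|^{-4})$, yields $|\Phi_\lambda|^2(p)\leq O(|p|^{-2})+O(|p|^{-2})=O(|p|^{-2})$, so $|\Phi_\lambda|(p)=O(|p|^{-1})$. The principal subtlety is the careful tracking of sign conventions for $\Delta$ in the subharmonicity step; once settled, the remainder is a direct application of standard potential theory to the integral identity \eqref{unwtintl2}, with the quadratic curvature decay $S(R)=O(R^{-2})$ controlling the sub-solution error.
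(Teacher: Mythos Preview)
Your argument is correct, but the paper takes a much shorter route. Its entire proof is: ``Apply the fundamental theorem of calculus, using $|\Phi_\lambda(p)|\to 0$ as $|p|\to\infty$.'' Concretely, the formula for $d_A\Phi_\lambda$ displayed just before \eqref{metoo} shows $|d_A\Phi_\lambda|\le C|F_A|$, and the preceding corollary gives $|F_A|=O(r^{-2})$; since Corollary~\ref{spec0} forces $|\Phi_\lambda(p)|\to 0$, integrating $\bigl|\frac{d}{dr}|\Phi_\lambda|\bigr|\le|\nabla_r\Phi_\lambda|$ along the radial ray from $p$ to infinity gives $|\Phi_\lambda(p)|\le C\int_{|p|}^\infty r^{-2}\,dr=C|p|^{-1}$.

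Your approach instead extracts the sphere $L^2$ bound from the integral identity \eqref{unwtintl2} and upgrades it to a pointwise bound via the approximate subharmonicity \eqref{trunclap} and the mean-value inequality. This is sound: the key inputs $T(R)=O(R^{-1})$ and $S(R)=O(R^{-2})$ are already available, your $L^1$ ball estimate $\int_{B_{|p|/2}(p)}|\Phi_\lambda|^2=O(|p|)$ follows from integrating $\int_{S_r}|\Phi_\lambda|^2\,d\sigma=O(1)$ in $r$, and the sub/supersolution mean-value inequality you invoke is standard. One minor advantage of your route is that it never appeals to the pointwise fact $|\Phi_\lambda|\to 0$; everything flows from the integral identities. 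On the other hand, the paper's FTC argument is essentially immediate once $S(R)=O(R^{-2})$ is in hand, and avoids the sign-convention bookkeeping you (correctly) flag.
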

\begin{proof}
Apply the fundamental theorem of calculus, using $|\Phi_\lambda(p)|\to 0$ as $|p|\to \infty.$ 
\end{proof}
Let $ad$ denote the adjoint represesentation. 
\begin{corollary}\label{grad1}
$\|R^{2+m}\nabla^m F_A\|_{L^\infty}+\|R^{2+m}ad^m(\Phi)^m F_A\|_{L^\infty}<\infty.$
\end{corollary}
\begin{proof}
 Let $\eta$ be the cutoff function defined on page \pageref{page:eta} in the paragraph preceding \eqref{offdiag1}. 
For $|p|$ sufficiently large, we have 
$$\int_{\IR^3}\eta(\frac{2|x-p|}{|p|})\Delta\frac{1}{2}|F_A|^2dv = \int_{\IR^3}\eta(\frac{2|x-p|}{|p|})(-|\nabla F_A|^2-|[\Phi,F_A]|^2+3\langle [F_{jk},F_{km}],F_{jm}\rangle)dv.$$
Integrating by parts and using $|\Delta \eta(\frac{2|x-p|}{|p|})| = O(|p|^{-2})$ implies
$$\int_{B_{\frac{|p|}{4}}(p)}( |\nabla F_A|^2+|[\Phi,F_A]|^2) dv= O(R^{-3}).$$
 Now a simple Moser iteration argument gives the desired result for  $m=1$. Induction on $m$ then gives the result for general $m$.  See, for example \cite[Proposition 16]{First}.
\end{proof}
\begin{corollary}\label{Best1}
$$T^B(R) = O(\delta^{-2m}R^{-1-2m}), \forall m,$$ 
and
$$\|r^{m} F_A^B\|_{L^\infty}<\infty, \forall m.$$ 
\end{corollary}

\section{Angular Estimate}\label{angsec}
In this section we refine our curvature estimates.  
\begin{proposition}\label{exper1}
$$\tau(L):={\frac{1}{L}+}\int_{B_L(0)^c} \frac{1}{r}|\nabla (r\Phi_{\lambda})|^2  dv<\infty,$$ and for some $c_\Phi>0$, 
$$|\nabla (r\Phi_{\lambda})|^2\leq \frac{c_\Phi^2\tau(\frac{r}{2})}{r^2} = o(r^{-2}).$$
Moreover, 
$$\hat\tau(L):= \int_{ B_L(0)^c} r|\nabla(\nabla_r(r\Phi_{\lambda}))|^2dv+\tau(L)<\infty.$$
\end{proposition}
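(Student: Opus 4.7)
The plan is to prove the three assertions in sequence, bootstrapping from the integral estimates of Section~4 to successively finer control on $\Phi_\lambda$ and its derivatives.

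\smallskip\noindent\textit{Step 1: $\int_{B_L(0)^c} r|\nabla_A\Phi_\lambda|^2\,dv<\infty$.}  Integrate \eqref{dortmund} in $R$ from $L$ to $\infty$; since $\lim_{M\to\infty}MT^D(M)\geq 0$ and Theorem~\ref{goodtheorem} gives $\int_L^\infty T^B(R)\,dR=O(L^{-2+\epsilon})$, we obtain $\int_L^\infty\int_{B_R(0)^c}\frac{R}{r}|F_A^D|^2\,dv\,dR = O(1)$.  Swapping the order of integration collapses the inner $R$-integral to $(|x|^2-L^2)/(2|x|)$, whence $\int_{B_{2L}(0)^c}|x||F_A^D|^2\,dv<\infty$.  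Corollary~\ref{Best1} yields the corresponding bound for $F_A^B$, and since $|\nabla_A\Phi_\lambda|\leq C(|F_A^\lambda|+|F_A^B|)\leq C|F_A|$ from the formula for $d_A\Phi_\lambda$ preceding \eqref{metoo2}, we obtain $\int r|\nabla_A\Phi_\lambda|^2\,dv<\infty$.

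\smallskip\noindent\textit{Step 2: $\tau(L)<\infty$.}  From $\nabla_A(r\Phi_\lambda)=dr\otimes\Phi_\lambda + r\nabla_A\Phi_\lambda$ one has pointwise $|\nabla_A(r\Phi_\lambda)|^2 = |\Phi_\lambda|^2 + r\,\partial_r|\Phi_\lambda|^2 + r^2|\nabla_A\Phi_\lambda|^2$.  Dividing by $r$, integrating on $B_L(0)^c\cap B_R(0)$, and treating the middle term via the divergence theorem on $(x/r)|\Phi_\lambda|^2$ (using $\mathrm{div}(x/r)=2/r$) yields
$$\int_{B_L(0)^c\cap B_R(0)}\frac{|\nabla_A(r\Phi_\lambda)|^2}{r}dv + \int_{B_L(0)^c\cap B_R(0)}\frac{|\Phi_\lambda|^2}{r}dv + \int_{S_L(0)}|\Phi_\lambda|^2 d\sigma = \int_{S_R(0)}|\Phi_\lambda|^2 d\sigma + \int_{B_L(0)^c\cap B_R(0)} r|\nabla_A\Phi_\lambda|^2 dv.$$
The pointwise bound $|\Phi_\lambda|=O(r^{-1})$ established earlier in Section~5 makes $\int_{S_R(0)}|\Phi_\lambda|^2 d\sigma = O(1)$ uniformly in $R$, Step~1 controls the last term on the right, and both integrals on the left are nonnegative, so $\tau(L)<\infty$ follows in the limit $R\to\infty$.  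As a byproduct, $\int_{B_L(0)^c}|\Phi_\lambda|^2/r\,dv<\infty$.

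\smallskip\noindent\textit{Step 3: pointwise bound.}  Setting $u:=r\Phi_\lambda$, the Leibniz rule gives $d_A^*d_A u = -(2/r)\Phi_\lambda - 2\nabla_r\Phi_\lambda + r\,d_A^*d_A\Phi_\lambda$, which is $O(r^{-2})$ pointwise by \eqref{metoo2} and Corollary~\ref{grad1}.  Moser iteration applied to $|\nabla_A u|^2$ via the Bochner-Weitzenbock formula, using Corollary~\ref{grad1} to bound curvature coefficients, gives
$$|\nabla_A(r\Phi_\lambda)|^2(p) \leq \frac{C}{|p|^3}\int_{B_{|p|/2}(p)}|\nabla_A(r\Phi_\lambda)|^2\,dv + O(|p|^{-4}).$$
Since $|x|\leq 3|p|/2$ throughout $B_{|p|/2}(p)$, the integral is at most $\tfrac{3|p|}{2}\tau(|p|/2)$, producing the claimed $|\nabla_A(r\Phi_\lambda)|^2(p)\leq c'\tau(|p|/2)/|p|^2$.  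Because $\tau$ is the tail of a convergent integral, $\tau(|p|/2)\to 0$, giving the $o(r^{-2})$ decay.

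\smallskip\noindent\textit{Step 4: $\hat\tau(L)<\infty$.}  Apply the Bochner-Weitzenbock formula $\tfrac12\Delta|\nabla_A u|^2 = \langle\Delta_A\nabla_A u,\nabla_A u\rangle - |\nabla_A^2 u|^2$ to the 1-form valued section $\nabla_A u=\nabla_A(r\Phi_\lambda)$, with $\Delta_A\nabla_A u = \nabla_A\Delta_A u + (\text{$F_A$-quadratic terms})$.  Multiplying by the weight $r$, integrating on $B_L(0)^c\cap B_R(0)$, and integrating by parts once to transfer the Laplacian off $r$, we obtain an inequality in which the boundary and curvature source terms are all controlled by $\tau(L)$, $\int r|F_A|^2\,dv$, the pointwise decay of $\nabla^m F_A$ from Corollary~\ref{grad1}, and the Step~3 bound on $|\nabla_A(r\Phi_\lambda)|$.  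This yields $\int r|\nabla_A^2(r\Phi_\lambda)|^2\,dv<\infty$, and since $|\nabla_A\nabla_r(r\Phi_\lambda)|\leq|\nabla_A^2(r\Phi_\lambda)|$, we conclude $\hat\tau(L)<\infty$.

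\smallskip\noindent\textit{Main obstacle.}  The technical heart is the Moser iteration in Step~3.  The Bochner formula introduces the curvature $F_A$ as a zeroth-order coefficient through $[\nabla_i,\nabla_j]=F_{ij}$, and the spectral projection $P_\lambda$ contributes off-diagonal source terms with denominators of the form $(\lambda_a-\lambda_b)$ that can degenerate where eigenvalues of $\Phi$ coalesce at finite $|x|$.  The saving grace is that these problematic denominators appear multiplied by components of $F_A^B$, which by Corollary~\ref{Best1} decay faster than any polynomial in $r$, compensating for the degeneration.  Assembling these ingredients into a Moser iteration with the correct weights, so that the source contributions are smaller than $\tau(r/2)/r^2$ rather than merely $O(r^{-2})$, is the primary difficulty.
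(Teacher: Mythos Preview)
Your proof is correct, and Steps~3--4 (Moser iteration via a Bochner inequality for $|\nabla(r\Phi_\lambda)|^2$, then multiplying by $r$ and integrating) coincide with the paper's argument through equations \eqref{autumn} and \eqref{rake}.  One small slip: in Step~4 the inequality $|\nabla_A\nabla_r(r\Phi_\lambda)|\leq|\nabla_A^2(r\Phi_\lambda)|$ is not literally true because differentiating the unit radial vector produces an extra $\frac{C}{r}|\nabla(r\Phi_\lambda)|$; however $\int r\cdot\frac{1}{r^2}|\nabla(r\Phi_\lambda)|^2\,dv=\tau(L)<\infty$, so the conclusion survives.

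Your route to $\tau(L)<\infty$ in Steps~1--2 is genuinely different from the paper's.  The paper establishes the Bochner-type identity \eqref{boeing}, $\Delta\tfrac12|r\Phi_\lambda|^2=-|\nabla(r\Phi_\lambda)|^2-\tfrac{1}{r}\partial_r|r\Phi_\lambda|^2+O(r^2|\Phi_\lambda||\nabla^*\nabla\Phi_\lambda|)$, multiplies by $r^{-1}$, integrates over an annulus, and exploits the sign of $\partial_L\bigl(L^{-3}\int_{S_L}|r\Phi_\lambda|^2\bigr)$ along a subsequence.  You instead (i) integrate \eqref{dortmund} in $R$ and swap the order of integration to extract the new intermediate fact $\int r|F_A^D|^2\,dv<\infty$, hence $\int r|\nabla_A\Phi_\lambda|^2\,dv<\infty$, and then (ii) use the purely algebraic expansion $|\nabla(r\Phi_\lambda)|^2=|\Phi_\lambda|^2+r\partial_r|\Phi_\lambda|^2+r^2|\nabla_A\Phi_\lambda|^2$ together with the divergence theorem and the already-known bound $\int_{S_R}|\Phi_\lambda|^2\,d\sigma=O(1)$.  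Your approach is more elementary in that it avoids the sign/subsequence argument and the identity \eqref{boeing}, at the cost of the extra Step~1; it also isolates the useful standalone estimate $\int r|F_A|^2\,dv<\infty$, which the paper does not state explicitly.
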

\begin{proof}
 \begin{align}\label{boeing}
 \Delta\frac{1}{2}|r\Phi_{\lambda}|^2= -|\nabla(r\Phi_{\lambda})|^2-\frac{1}{r}\frac{\p}{\p r}|r\Phi_{\lambda}|^2  +O(r^2|\Phi_{\lambda}||\nabla^*\nabla\Phi_{\lambda}|).
  \end{align}
Multiply \eqref{boeing} by $r^{-1}$ and integrate over $B_L(0)\setminus B_R(0)$, for $L>R$,  to get 
\begin{align}\label{panam}
&L^2\frac{\p}{\p L}L^{-3}\int_{S_L(0) }  \frac{1}{2}  |r\Phi_{\lambda}|^2 d\sigma  
 \nonumber\\
&= \int_{B_L(0)\setminus B_R(0)}r^{-1}[ |\nabla (r\Phi_{\lambda})|^2 +O( r^{2}|\Phi_{\lambda}||\nabla^*\nabla\Phi_{\lambda}|)]dv\nonumber\\
&\quad +R^2\frac{\p}{\p R}R^{-3}\int_{S_R(0) }  \frac{1}{2}  |r\Phi_{\lambda}|^2 d\sigma .
\end{align}

Since $L^{-3}\int_{S_L(0) } \frac{1}{2}|r\Phi_{\lambda}|^2d\sigma = O(L^{-1})$, the left-hand side of \eqref{panam} is negative for an unbounded sequence of $L$ values. Hence, as $L\to\infty$,  
$ \int_{B_L(0)\setminus B_R(0)}[\frac{1}{r}|\nabla (r\Phi_{\lambda})|^2   +O( |\nabla^*\nabla\Phi_{\lambda}|)]dv$ is uniformly bounded,  and we can take the limit as $R\to\infty$ to deduce 
$$  \int_{B_R(0)^c} \frac{1}{r}|\nabla (r\Phi_{\lambda})|^2  dv<\infty.$$
We also have 
\begin{align}\label{autumn}
\Delta\frac{1}{2}|\nabla (r\Phi_{\lambda} )|^2=&-|\nabla\nabla (r\Phi_{\lambda} )|^2-\langle\nabla_j\nabla_j\nabla_m (r\Phi_{\lambda} ),\nabla_m(r\Phi_{\lambda} )\rangle\nonumber\\
=&-|\nabla\nabla (r\Phi_{\lambda} )|^2+\langle \nabla r\nabla^*\nabla   \Phi_{\lambda}  ,\nabla(r\Phi_{\lambda} )\rangle
- r^{-1}\nabla_r | \nabla (r\Phi_{\lambda}) |^2
\nonumber\\
&
+ \frac{4}{r^2}| \nabla_r  (r\Phi_{\lambda}) |^2
-2\langle  [F_{mr}, \Phi_{\lambda} ] ,\nabla_m(r\Phi_{\lambda} )\rangle
\nonumber\\
&
-\frac{2}{r^2}|\nabla ( r\Phi_{\lambda})|^2 -2\langle[F_{jm},  \nabla_j(r\Phi_{\lambda} )],\nabla_m(r\Phi_{\lambda} )\rangle
 \nonumber\\
\leq & \frac{c}{r^2}| \nabla (r\Phi_{\lambda}) |^2 + O(r^{-N}),
\end{align} 
for any $N$ and for some $c>0$. We now apply Moser iteration again. In Proposition \ref{whorse} choose $f:= \frac{1}{2}|\nabla (r\Phi_{\lambda} )|^2+r^{-3}$, $R_2= \frac{|p|}{2}$, and $R_1=0$.  With these choices, the inequality   \eqref{autumn} for  $N=5$ implies a uniform upper bound for the quantity $W$ defined in Proposition \ref{whorse}. Hence for some constants $c',c_\Phi>0$, 
\begin{align}
| \nabla (r\Phi_{\lambda}) |^2(p)\leq c'(|p|^{-3}\int_{B_{\frac{|p|}{2}}(x)}| \nabla (r\Phi_{\lambda}) |^2dv +  |p|^{-3})\leq c_\Phi^2|p|^{-2}\tau(\frac{|p|}{2}).
\end{align} 
Multiply \eqref{autumn} by $r$ and integrate over $B_L(0)\setminus B_R(0)$, for some $L>R$ to get 
\begin{align}\label{rake}
&L^6 \frac{\p}{\p L}L^{-5}\left(\int_{S_L(0)} \frac{1}{2}|\nabla (r\Phi_{\lambda} )|^2d\sigma\right)-R^6\frac{\p}{\p R}R^{-5}\left(\int_{S_R(0)} \frac{1}{2}|\nabla (r\Phi_{\lambda} )|^2d\sigma\right)\nonumber\\
&=\int_{B_L(0)\setminus B_R(0)} [r|\nabla\nabla (r\Phi_{\lambda} )|^2-r\langle \nabla r\nabla^*\nabla   \Phi_{\lambda}  ,\nabla(r\Phi_{\lambda} )\rangle 
- \frac{9}{2r} | \nabla (r\Phi_{\lambda}) |^2
\nonumber\\
&\quad 
+2r\langle  [F_{mr}, \Phi_{\lambda} ] ,\nabla_m(r\Phi_{\lambda} )\rangle
 +2r\langle[F_{jm},  \nabla_j(r\Phi_{\lambda} )],\nabla_m(r\Phi_{\lambda} )\rangle]dv.
\end{align}
The finiteness of $\hat \tau(L)$ now follows from the same argument as for $\tau(L)$, except that it now uses $\tau(L)<\infty$ as an input. 
\end{proof}

\begin{corollary}\label{shortcut}
Let $c_\Phi>0$ be the constant introduced in Proposition \ref{exper1}.
\begin{align}
  |F_A(\frac{\p}{\p r},\cdot)|=O(r^{-2}\tau^{\frac{1}{2}}(\frac{r}{2})),
\end{align}
\begin{align}\label{treefrog}
\Phi_\lambda = -r\ast F_A^\lambda(\frac{\p}{\p r}) + O(r^{-1}\tau^{\frac{1}{2}}(\frac{r}{2})),
\end{align}
and 
\begin{align}\label{roundabout}
|\nabla^0\Phi_\lambda| \leq  c_\Phi r^{-2}\tau^{\frac{1}{2}}(\frac{r}{2}), 
\end{align}
where we have written $\nabla = dr\otimes \nabla_{\frac{\p}{\p r}}+\nabla^0.$
\end{corollary}
\begin{proof}
Combine the pointwise bound for $|\nabla r\Phi_\lambda|$ from Proposition \ref{exper1} with the approximate monopole equation \eqref{lambdamon} and the $|F^B_A|$ estimate of Corollary \ref{Best1}. 
\end{proof}

\section{Connection Asymptotics}  
For $i\lambda\in \spec (\Phi(\infty)),$ set $E_\lambda:= P_\lambda E.$ By Corollary \ref{spec0}, the decomposition $E =\oplus E_\lambda$ is well defined outside a compact set. 
In this section,  we   show that $E$ further decomposes as $\oplus_{\lambda,\beta}E(\lambda,\beta)$, and 
$\Phi$ acts on $E(\lambda,\beta)$ as $i (\lambda+\frac{\beta}{r})+o(r^{-1}).$

Let $d_\lambda:= \rank(P_\lambda)$. For large  $p$, denote the eigenvalues of  $\Phi_\lambda$ (restricted to $\Im (P_\lambda)$) by $\{\frac{ia_j(p,\lambda)}{r} \}_{j=1}^{d_\lambda}$, $a_j(p,\lambda)\leq a_{j+1}(p,\lambda)$. For every $\alpha>0$, there is a maximal decomposition of $\{1,\ldots,d_\lambda\}$ into disjoint sets $\{1,\ldots,d_\lambda\}=\cup_{i=1}^{n(\alpha,p,\lambda)}J_i^\alpha(p,\lambda)$, where $a_j(p,\lambda)-a_l(p,\lambda)> 4 \alpha$, if $l\in J_i^\alpha(p,\lambda)$ and $j\in  J_{i+1}^\alpha(p,\lambda)$. Here `maximal' means that  $n(\alpha,p,\lambda)$ is maximal.  Let 
$$\delta_\alpha(r):= \min\{a_j(q,\lambda)-a_l(q,\lambda):l\in 
J_i^\alpha(p,\lambda), j\in  J_{i+1}^\alpha(p,\lambda), 1\leq i<n(\alpha,p,\lambda),  
\text{ and } |q|=r\}.$$

From  Corollary  \ref{shortcut} and the fundamental theorem of calculus, we see that  for $|p| = |q| = R$, $|a_j(p,\lambda)-a_j(q,\lambda)|\leq \pi c_\Phi\tau^{\frac{1}{2}}(\frac{R}{2})$. %, and $|F_A(\frac{\p}{\p r},\cdot)|=O( \tau^{\frac{1}{2}}(\frac{R}{2})R^{-2})$. 
Given $\alpha$, we restrict attention to $p$ sufficiently large so that 
$\pi c_\Phi\tau^{\frac{1}{2}}(\frac{|p|}{2})<\frac{\alpha}{4}$. 
Let $C(J_k^\alpha(p,\lambda))$ be a curve surrounding $\{\frac{a_j(p,\lambda)}{r}\}_{j\in J_i^\alpha(p,\lambda)}$, with  distance $\geq \frac{\alpha}{2r}$  between $C(J_k^\alpha(p,\lambda))$ and $\{\frac{ia_j(p,\lambda)}{r} \}_{j=1}^{d_\lambda}$. Assume $C(J_i^\alpha(p,\lambda))$ does not enclose any $\frac{a_j(p,\lambda)}{r}$ for $j\not\in J_i^\alpha(p,\lambda).$ On $S_R(0)$, set 
$$P_{J_i^\alpha(p,\lambda)} := \frac{1}{2\pi i}\int_{C(J_i^\alpha(p,\lambda))}(z+i\lambda-\Phi)^{-1}dz,$$
$$\Phi_{J_i^\alpha(p,\lambda)} := P_{J_i^\alpha(p,\lambda)}\Phi_\lambda P_{J_k^\alpha(p,\lambda)}=\frac{1}{2\pi i}\int_{C(J_i^\alpha(p,\lambda))}z(z+i\lambda-\Phi)^{-1}dz,$$
and
$$F^{J_i^\alpha(p,\lambda)}_A := P_{J_i^\alpha(p,\lambda)}F_AP_{J_i^\alpha(p,\lambda)}.$$
These constructions extend to any neighborhood of $S_R(0)$ on which, say, $\delta_\alpha(r)>2\alpha$. Let $N_\alpha S_R(0)$ be such a neighborhood. 
\begin{proposition}\label{lunch}
In $N_\alpha S_R(0)$, for $\lambda_1\not = \lambda_2$, 
\begin{align}
\label{uneqlambda}|P_{J_i^\alpha(p,\lambda_1)}d_A P_{J_l^\alpha(p,\lambda_2)}|= O(r^{-N}), \forall N,
\end{align}
and for $i\not = j$ 
\begin{align}
\label{uneqbeta}|P_{J_i^\alpha(p,\lambda)}d_A P_{J_j^\alpha(p,\lambda)}|= o(\delta_\alpha(|q|)^{-1}r^{-1}).
\end{align} 
\end{proposition}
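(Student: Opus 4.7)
My plan is to represent $d_A P_{J_l^\alpha(p,\lambda_2)}$ through its contour integral and the monopole equation $d_A\Phi=\ast F_A$. Differentiating the identity $P_{J_l^\alpha(p,\lambda_2)}=\frac{1}{2\pi i}\int_{C(J_l^\alpha(p,\lambda_2))}(z+i\lambda_2-\Phi)^{-1}dz$ gives
\begin{align*}
d_A P_{J_l^\alpha(p,\lambda_2)}=\frac{1}{2\pi i}\int_{C(J_l^\alpha(p,\lambda_2))}(z+i\lambda_2-\Phi)^{-1}(\ast F_A)(z+i\lambda_2-\Phi)^{-1}\,dz.
\end{align*}
The standing hypothesis $\tau^{\frac{1}{2}}(|p|/2)<\alpha/4$ together with Corollary \ref{shortcut} ensures that $C(J_l^\alpha(p,\lambda_2))$ continues to separate the pointwise spectrum of $\Phi$ at every $q\in S_{|p|}(0)$, so $P_{J_i^\alpha(p,\lambda_1)}(q)$ is an exact spectral projection of $\Phi(q)$ and commutes with the resolvent. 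Diagonalizing pointwise in a $\Phi(q)$-eigenframe $\{w_a\}$ with $\Phi w_a=i\lambda_a(q)w_a$ and collecting residues, I obtain the explicit formula
\begin{align*}
P_{J_i^\alpha(p,\lambda_1)}d_A P_{J_l^\alpha(p,\lambda_2)}(q)=\sum_{\substack{a\in J_l(p,\lambda_2)\\ b\in J_i(p,\lambda_1)}}\frac{(\ast F_A)_a^b(q)}{i(\lambda_a(q)-\lambda_b(q))}\,w_b\otimes w_a^\dagger.
\end{align*}

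For \eqref{uneqlambda} with $\lambda_1\neq\lambda_2$, Corollary \ref{spec0} yields $|\lambda_a(q)-\lambda_b(q)|\geq\tfrac{1}{2}|\lambda_1-\lambda_2|$ for $|q|$ large, so the denominators are uniformly bounded away from zero. Each numerator is an entry of the strictly block-off-diagonal piece $P_{\lambda_1}(\ast F_A)P_{\lambda_2}$ and is therefore dominated by $|F_A^B|=O(|q|^{-N})$ for every $N$ by Corollary \ref{Best1}. Summing over the finite index set yields the super-polynomial decay asserted in \eqref{uneqlambda}.

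For \eqref{uneqbeta} with $\lambda_1=\lambda_2=\lambda$ and $i\neq j$, the denominators only satisfy $|\lambda_a(q)-\lambda_b(q)|\geq\delta_\alpha(|q|)/|q|$ by the definition of $\delta_\alpha$, contributing a prefactor $|q|/\delta_\alpha(|q|)$; the remaining task is to show $|P_{J_i^\alpha}(\ast F_A)P_{J_j^\alpha}|=o(|q|^{-2})$. Since both projections lie inside $P_\lambda$ and commute with $\Phi$, the identities $[\Phi,P_\lambda]=0$, $P_\lambda P_{\lambda'}=0$ for $\lambda'\neq\lambda$, and $P_\lambda(\nabla P_\lambda)P_\lambda=0$ together reduce $P_{J_i^\alpha}(\ast F_A)P_{J_j^\alpha}=P_{J_i^\alpha}(\nabla\Phi)P_{J_j^\alpha}$ to $P_{J_i^\alpha}(\nabla\Phi_\lambda)P_{J_j^\alpha}$. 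I then split into radial and tangential components. On the one hand, \eqref{treefrog} rearranges to $\nabla_{\frac{\p}{\p r}}\Phi_\lambda=-\Phi_\lambda/|q|+O(|q|^{-2}\tau^{\frac{1}{2}}(|q|/2))$, and the leading $\Phi_\lambda/|q|$ term annihilates in the $P_{J_i^\alpha}\cdots P_{J_j^\alpha}$ sandwich because $P_{J_i^\alpha}$ and $P_{J_j^\alpha}$ are disjoint spectral projections of $\Phi_\lambda$. On the other hand, Corollary \ref{shortcut} provides $|\nabla^0\Phi_\lambda|=O(|q|^{-2}\tau^{\frac{1}{2}}(|q|/2))$ for the tangential piece. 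Since $\tau(L)\to 0$ as $L\to\infty$ by Proposition \ref{exper1}, both pieces are $o(|q|^{-2})$, and multiplying by $|q|/\delta_\alpha(|q|)$ yields \eqref{uneqbeta}. The main technical subtlety lies in the projection algebra enabling the reduction $P_{J_i^\alpha}(\ast F_A)P_{J_j^\alpha}=P_{J_i^\alpha}(\nabla\Phi_\lambda)P_{J_j^\alpha}$; once that identification is in hand, the angular decay from Proposition \ref{exper1} and the $F_A^B$ bounds from Corollary \ref{Best1} do the remaining work.
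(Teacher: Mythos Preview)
Your proof is correct and follows essentially the same route as the paper: you derive the identical resolvent/residue formula for $P_{J_i^\alpha}d_AP_{J_l^\alpha}$, invoke Corollary~\ref{Best1} for the $\lambda_1\neq\lambda_2$ case, and use the angular estimates of Proposition~\ref{exper1}/Corollary~\ref{shortcut} for the $i\neq j$ case. The only difference is presentational: where the paper writes in one line ``by Corollary~\ref{shortcut}, $|P_{J_i^\alpha}F_AP_{J_j^\alpha}|=o(r^{-2})$'', you unpack this by first proving the exact identity $P_{J_i^\alpha}(\ast F_A)P_{J_j^\alpha}=P_{J_i^\alpha}(\nabla\Phi_\lambda)P_{J_j^\alpha}$ and then splitting $\nabla\Phi_\lambda$ into its radial part (where the leading $-\Phi_\lambda/r$ is killed by the orthogonal projections) and its tangential part $\nabla^0\Phi_\lambda$---both controlled by $r^{-2}\tau^{1/2}(r/2)$. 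This is a clean and slightly more transparent justification of the same step.
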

\begin{proof}
Compute for $J_i^\alpha(p,\lambda_1)\not = J_j^\alpha(p,\lambda_2)$,
\begin{align}\label{allcoms}
 &P_{J_i^\alpha(p,\lambda_1)} d_A  P_{J_j^\alpha(p,\lambda_2)} = P_{J_i^\alpha(p,\lambda_1)} [d_A , P_{J_j^\alpha(p,\lambda_2)}]\nonumber\\
=& P_{J_i^\alpha(p,\lambda_1)}\frac{(\ast F_A)_a^b}{i\lambda_a-i\lambda_b} w_b\otimes w_a^\dagger P_{J_j^\alpha(p,\lambda_2)}.
\end{align}
where $\lambda_a(x) = \lambda_2+ \frac{a_{j_a}(q)}{r}$ and $\lambda_b(x) = \lambda_1+ \frac{a_{j_b}(q)}{r},$ for some $a_{j_a}(q),a_{j_b}(q)$ satisfying 
$|a_{j_a}(q)-a_{j_b}(q)|\geq \delta_\alpha(|q|).$ 

By Corollary \ref{Best1}, $|F_A^B|= O(r^{-N}),$ for every $N$. Hence \eqref{uneqlambda} follows from \eqref{allcoms}. 
For $\lambda_1(x)  = \lambda_2=\lambda$, $i\not = j$, we have 
$|\frac{(\ast F_A)_a^b}{\lambda_a-\lambda_b}|\leq C|q||P_{J_i^\alpha(p,\lambda )} F_A P_{J_j^\alpha(p,\lambda)}|.$ By  Equation \eqref{treefrog}, 
$|P_{J_i^\alpha(p,\lambda )} F_A P_{J_j^\alpha(p,\lambda)}| = o(r^{-2})$, and \eqref{uneqbeta} follows. 
\end{proof}
\begin{lemma}\label{aboveanalysis}
For $q\in N_\alpha S_R(0)$, 
$$\tr   \Phi_{J_k^{\alpha}(p)}(q)=     \frac{i}{2|q|}c_1(E_{J^\alpha_k(p,\lambda)})(S^2)+ O(|q|^{-1}\tau^{\frac{1}{2}}(\frac{|q|}{2})[1+\alpha^{-2}|q|^{-2}]. 
$$
\end{lemma}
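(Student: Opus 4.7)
The plan is to show that $\tr\Phi_{J_k^\alpha(p,\lambda)}$ is effectively spherically symmetric at large radius, and to identify its radial profile by combining the truncated monopole equation with Chern--Weil theory applied to the subbundle $E_{J_k^\alpha(p,\lambda)}$. Taking the trace is crucial: it annihilates every off-block cross term that would otherwise obstruct the calculation.

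First, arguing exactly as in the earlier derivation of $d_A\Phi_\lambda$, Cauchy's formula for the spectral projector yields
\begin{align*}
d_A\Phi_{J_k^\alpha(p,\lambda)} = \ast F^{J_k^\alpha(p,\lambda)} + R_1,
\end{align*}
where $R_1$ is a linear combination of off-$J$ cross terms $\tfrac{\lambda_a-\lambda}{\lambda_a-\lambda_b}(\ast F_A)^b_a\, w_b\otimes w_a^\dagger$ with exactly one of $a,b$ in $J_k^\alpha(p,\lambda)$. Since every summand has $a\neq b$, the trace kills $R_1$, giving $d\,\tr\Phi_{J_k^\alpha(p,\lambda)}=\tr\ast F^{J_k^\alpha(p,\lambda)}$. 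Separately, Chern--Weil applied to $E_{J_k^\alpha(p,\lambda)}$ with its induced projection connection (using $P_{J_k^\alpha}dP_{J_k^\alpha}P_{J_k^\alpha}=0$) gives
\begin{align*}
-2\pi i\,c_1\bigl(E_{J_k^\alpha(p,\lambda)}\bigr)(S^2) = \int_{S_R(0)}\tr F^{J_k^\alpha(p,\lambda)}\big|_{S_R} + \int_{S_R(0)}\tr\bigl(P_{J_k^\alpha}\,dP_{J_k^\alpha}\wedge dP_{J_k^\alpha}\bigr)\big|_{S_R}.
\end{align*}

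The Hodge-star identity $(\ast F)(\partial_r)=F_{\theta\phi}/(r^2\sin\theta)$ on $\IR^3$ converts $\int_{S_R}\tr F^{J_k^\alpha(p,\lambda)}|_{S_R}$ into $\int_{S_R}\tr\nabla_r\Phi_{J_k^\alpha(p,\lambda)}\,dA_{S_R} = R^2\partial_R\bigl(R^{-2}G(R)\bigr)$, where $G(R):=\int_{S_R(0)}\tr\Phi_{J_k^\alpha(p,\lambda)}\,dA_{S_R}$. Since $|\Phi_{J_k^\alpha(p,\lambda)}|=O(R^{-1})$, $R^{-2}G(R)\to 0$ as $R\to\infty$, so integrating the resulting ODE from $R$ to $\infty$ yields $G(R)/(4\pi R^2)=\frac{i c_1}{2R}+\mathrm{error}$. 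Finally, the angular gradient bound $|\nabla^0\Phi_{J_k^\alpha(p,\lambda)}|=O(R^{-2}\tau^{1/2}(R/2))$ inherited from Corollary~\ref{shortcut} (extended to the refined projector using that angular components of $\ast F$ involve $F(\partial_r,\cdot)$) replaces the sphere average by the pointwise value $\tr\Phi_{J_k^\alpha(p,\lambda)}(q)$, contributing the $O(|q|^{-1}\tau^{1/2}(|q|/2))$ summand to the error.

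\textbf{Main obstacle.} The principal technical issue is tracking the $\alpha$-dependence of the Chern--Weil $dP\wedge dP$ correction and of the refined angular estimate. Because eigenvalues within the $\lambda$-block are separated only by $\delta_\alpha(R)\sim\alpha/R$, each derivative of $P_{J_k^\alpha}$ picks up a factor of $R/\alpha$ from the resolvent bound, so the quadratic $dP\wedge dP$ term accumulates two such factors and contributes the $\alpha^{-2}|q|^{-2}$ weighting in the final bound. Carefully combining this with the $O(r^{-2}\tau^{1/2})$ bound on the tangential-to-radial curvature components from Corollary~\ref{shortcut} produces the stated error form $O(|q|^{-1}\tau^{1/2}(|q|/2)[1+\alpha^{-2}|q|^{-2}])$.
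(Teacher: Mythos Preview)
Your proposal is correct and follows essentially the same strategy as the paper: relate $\tr\Phi_{J_k^\alpha(p,\lambda)}$ to the spherical integral of $\tr F^{J_k^\alpha(p,\lambda)}$, compare the latter to the induced curvature $\tr F_{J_k^\alpha(p,\lambda)}$ via the $dP\wedge dP$ correction (whose $\alpha$-dependence you track correctly), apply Chern--Weil, and finally pass from the sphere average to the pointwise value using the angular estimate from Corollary~\ref{shortcut}.

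The only difference is cosmetic. The paper does not set up and integrate an ODE for $G(R)$; instead it invokes the already-proved pointwise identity \eqref{treefrog}, projected by $P_{J_k^\alpha(p,\lambda)}$, to obtain $\Phi_{J_k^\alpha(p)} = -r\ast F_A^{J_k^\alpha(p)}(\tfrac{\partial}{\partial r}) + O(R^{-1}\tau^{1/2}(R/2))$ directly. This collapses your ODE step into a single line: dividing by $r$ and integrating over $S_R(0)$ immediately gives $R^{-1}\int_{S_R}\tr\Phi_{J_k^\alpha}\,d\sigma = \int_{S_R}\tr F^{J_k^\alpha} + O(\tau^{1/2})$, with no need to integrate in $R$ or invoke the decay of $G(R)/R^2$ at infinity. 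Your exact trace identity $d\,\tr\Phi_{J_k^\alpha} = \tr\ast F^{J_k^\alpha}$ is a clean alternative and arguably more transparent, but the paper's route is shorter because the hard work was already packaged into Corollary~\ref{shortcut}.
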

\begin{proof}
 Pre- and post-composing \eqref{treefrog} with $P_{J_k^{\alpha}(p)}$, we have 
\begin{align}\label{crapaud}
\Phi_{J_k^{\alpha}(p)} = -r\ast F_A^{J_k^{\alpha}(p)}(\frac{\p}{\p r}) + O(R^{-1}\tau^{\frac{1}{2}}(\frac{R}{2})).
\end{align}
Let $E_{J_k^\alpha(p,\lambda)}:= P_{J_k^\alpha(p,\lambda)}E$. Let $d_{J_k^\alpha(p,\lambda)}$ denote the rank of $E_{J_k^\alpha(p,\lambda)}$.  This subbundle is well defined in $N_\alpha S_R(0)$. It has the induced connection $P_{J_k^\alpha(p,\lambda)}d_AP_{J_k^\alpha(p,\lambda)},$ with curvature  
\begin{align}
F_{J_k^\alpha(p,\lambda)}:&= P_{J_k^\alpha(p,\lambda)}d_AP_{J_k^\alpha(p,\lambda)}d_AP_{J_k^\alpha(p,\lambda)}\nonumber \\
&= 
 F_A^{J_k^\alpha(p,\lambda)}+P_{J_k^\alpha(p,\lambda)}[d_A,P_{J_k^\alpha(p,\lambda)}][d_A,P_{J_k^\alpha(p,\lambda)}]P_{J_k^\alpha(p,\lambda)}\nonumber\\
&=  F_A^{J_k^\alpha(p,\lambda)}+P_{J_k^\alpha(p,\lambda)}  \frac{w_b\otimes w_a^\dagger}{\lambda_a-\lambda_b}  (I-P_{J_k^\alpha(p,\lambda)}) (\ast F_A)_a^b\wedge (\ast F_A)_c^a\frac{w_a\otimes w_c^\dagger}{\lambda_c-\lambda_a}  P_{J_k^\alpha(p,\lambda)}.
\end{align}
 
Taking traces, we have
\begin{align}\label{possum}
\tr  F_{J_k^\alpha(p,\lambda)}  &= \tr  F_A^{J_k^\alpha(p,\lambda)}-\sum_{b\in J_k^\alpha(p,\lambda)}\sum_{ i\lambda_a\not\in \spec (\Phi_{J_k^\alpha(p,\lambda)})}  \frac{(\ast F_A)_a^b\wedge (\ast F_A)_b^a}{(\lambda_a-\lambda_b)^2}    \nonumber\\
&=\tr  F_A^{J_k^\alpha(p,\lambda)}+O(\tau^{\frac{1}{2}}(\frac{R}{2})\alpha^{-2}R^{-2}).
\end{align}
Here we have used Corollary \ref{shortcut} (and the vanishing of the $dr\wedge dr$ component of $\ast F_A\wedge \ast F_A$) to gain the $\tau^{\frac{1}{2}}(\frac{R}{2})$ factor in the error estimate for \eqref{possum}.

Taking traces and integrating yields 
\begin{align}
R^{-1}\int_{S_R(0)}\tr  i\Phi_{J_k^{\alpha}(p)}d\sigma &=  \int_{S_R(0)}\tr  iF^{J_k^{\alpha}(p)}  + O( \tau^{\frac{1}{2}}(\frac{R}{2}))\nonumber\\
&=  -2\pi c_1(E_{J^\alpha_k(p,\lambda)})(S^2)+ O(\tau^{\frac{1}{2}}(\frac{R}{2})\alpha^{-2} ).
\end{align}
 Hence, $\forall q\in N_\alpha S_R(0)$, 
\begin{align}\label{vix}
 \tr   \Phi_{J_k^{\alpha}(p)}(q)=     \frac{i}{2R}c_1(E_{J^\alpha_k(p,\lambda)})(S^2)+ O(R^{-1}\tau^{\frac{1}{2}}(\frac{R}{2})[1+\alpha^{-2}R^{-2}]), 
\end{align}
as desired.  
\end{proof}
\begin{theorem}\label{specf}
For each $i\lambda\in \spec (\Phi(\infty))$, the spectrum of $|p|\Phi_\lambda(p)$ has a well defined limit as $|p|\to \infty$. At large radius, $E$ decomposes into a direct sum of vector bundles $E=\oplus E(\lambda,\beta)$, such that $\Phi$ acts as $i \lambda+ i \frac{\beta}{r}+o(r)$ on $E(\lambda,\beta)$, with $\beta\in \frac{1}{2}\IZ$.  
\end{theorem}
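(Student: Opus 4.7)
The plan is to combine three ingredients—Corollary \ref{shortcut} for angular variation of the spectrum, Proposition \ref{lunch} for off-block-diagonal decay of $d_A$, and the Chern--Weil identity of Lemma \ref{aboveanalysis}—in three stages: convergence of the rescaled spectrum $\text{spec}(|p|\Phi_\lambda(p))$, construction of a smooth subbundle splitting, and quantization of the asymptotic eigenvalues.

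For the first stage, the rescaled eigenvalues $\{|p|a_j(p,\lambda)\}_{j=1}^{d_\lambda}$ are uniformly bounded (by the earlier bound $|\Phi_\lambda|=O(r^{-1})$) and, for $|p|=|q|=R$, vary by only $o(1)$ along the sphere $S_R$ by Corollary \ref{shortcut}. To control their $R\to\infty$ behavior I would apply Lemma \ref{aboveanalysis} to progressively finer block partitions. For each fixed $\alpha>0$ and each block $J_k^\alpha(p,\lambda)$, the lemma gives
\[
\sum_{j\in J_k^\alpha(p,\lambda)}|q|\,a_j(q,\lambda)\;=\;\tfrac12\, c_1(E_{J_k^\alpha(p,\lambda)})(S^2)+o(1),
\]
pinning each cluster sum to a value in $\tfrac12\IZ$. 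Since the spread within any $\alpha$-block is at most $\alpha$, letting $\alpha\to 0$ adaptively with $R$—slowly enough that the block decomposition remains stable—forces every individual rescaled eigenvalue to converge. Label the resulting distinct limits $\beta_1<\cdots<\beta_N$ with multiplicities $m_1,\ldots,m_N$, and fix any $\alpha<\tfrac12\min_k(\beta_{k+1}-\beta_k)$; then for $R$ sufficiently large, the partition $\{J_k^\alpha(q,\lambda)\}_k$ is locally constant in $q\in S_R$, and the spectral projections $P_{J_k^\alpha(q,\lambda)}$ glue into smooth projections $P_{\lambda,\beta_k}$ on a neighborhood of infinity. Their images $E(\lambda,\beta_k):=P_{\lambda,\beta_k}E$ provide the required decomposition, with Proposition \ref{lunch} supplying the off-diagonal decay of $d_A$ between distinct clusters and the action of $\Phi$ on $E(\lambda,\beta_k)$ being $i\lambda+i\beta_k/r+o(r^{-1})$ by construction.

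The final stage is the quantization $\beta_k\in\tfrac12\IZ$. Applying Lemma \ref{aboveanalysis} directly to $E(\lambda,\beta_k)$ only yields $m_k\beta_k=\tfrac12 c_1(E(\lambda,\beta_k))(S^2)\in \tfrac12\IZ$, i.e.\ $\beta_k\in\tfrac{1}{2m_k}\IZ$. To upgrade this I would smoothly split $E(\lambda,\beta_k)|_{S_R}\cong L_1\oplus\cdots\oplus L_{m_k}$, which is possible over $S^2$ for smooth complex vector bundles. Since $\Phi$ acts as the scalar $i\lambda+i\beta_k/r+o(r^{-1})$ on the whole rank-$m_k$ cluster, it acts by the same scalar on each sub-line-bundle $L_i$, so rerunning the argument of Lemma \ref{aboveanalysis} on $L_i$ yields $2\beta_k=c_1(L_i)\in\IZ$, hence $\beta_k\in\tfrac12\IZ$. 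The main obstacle is precisely this last upgrade: the Chern--Weil computation in Lemma \ref{aboveanalysis} used a spectrally defined projection with a tightly controlled commutator structure, and one must verify that the asymptotic identity $\Phi|_{L_i}\approx -r\ast F_{L_i}(\partial_r)+o(r^{-1})$ from Corollary \ref{shortcut} survives for an arbitrary smooth line sub-bundle of the cluster. The key saving is that $\Phi$ is a scalar on the ambient cluster, so the commutators $[d_A,P_{L_i}]$ inherit only the small off-cluster components of $F_A$, keeping the second fundamental form of $L_i\subset E(\lambda,\beta_k)$ small enough to be absorbed into the $o(r^{-1})$ remainder.
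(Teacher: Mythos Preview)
Your first two stages are broadly in line with the paper, though your convergence argument glosses over the key mechanism. You say ``letting $\alpha\to 0$ adaptively with $R$\ldots forces every individual rescaled eigenvalue to converge,'' but the block decomposition need not remain stable as $\alpha$ shrinks, and the error in Lemma \ref{aboveanalysis} carries an $\alpha^{-2}R^{-2}$ factor. The paper never sends $\alpha\to 0$; instead it runs an integrality bootstrap: if some sub-gap $a_{j+1}-a_j>\beta>R^{-1}$ appears inside a block, apply Lemma \ref{aboveanalysis} to both halves and use that $c_1(E_{J'})/|J'|$ and $c_1(E_{J''})/|J''|$ are rationals with bounded denominator which are forced to coincide once all consecutive gaps are below $\tfrac{1}{100d_\lambda}$. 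This pins every eigenvalue in the block to $c_1(E_{J^\alpha_i})/|J^\alpha_i| + O(\tau^{1/2}(R/2))$ and stabilizes the block structure for all large $R$ at a fixed $\alpha_\infty>0$. Your sketch can be completed this way, but as written it does not supply this step.

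The real gap is in your quantization. Your claim that ``the commutators $[d_A,P_{L_i}]$ inherit only the small off-cluster components of $F_A$'' is false. Writing $(I-P_{L_i}) = (P_{\lambda,\beta_k}-P_{L_i}) + (I-P_{\lambda,\beta_k})$, the off-cluster piece of $[d_A,P_{L_i}]$ is indeed governed by $[d_A,P_{\lambda,\beta_k}]=o(r^{-1})$ via Proposition \ref{lunch}, but the within-cluster piece $(P_{\lambda,\beta_k}-P_{L_i})[d_A,P_{L_i}]P_{L_i}$ is the second fundamental form of $L_i$ inside $E(\lambda,\beta_k)$ and depends entirely on your choice of splitting. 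The fact that $\Phi$ is scalar on the cluster gives $[\Phi,P_{L_i}]=o(r^{-1})$, which says nothing about $[d_A,P_{L_i}]$ tangentially. For a generic smooth splitting over $S^2$ the $c_1(L_i)$ are arbitrary integers summing to $c_1(E(\lambda,\beta_k))$; rerunning Lemma \ref{aboveanalysis} on such an $L_i$ would correctly return that integer, not $2\beta_k$. The paper's route is different: it observes that the induced connection on $E(\lambda,\beta_k)|_{S_R}$ has curvature $-\tfrac{i\beta_k}{r^2}I + o(r^{-2})$, hence converges to a Hermitian--Einstein connection on $\IC\IP^1$; polystability then forces all line summands to have equal first Chern class, giving $2\beta_k\in\IZ$.
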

\begin{proof}
Fix $i\lambda\in \spec (\Phi(\infty))$. It suffices to prove that there exists $\alpha_\infty>0$ and a decomposition $\{1,\ldots,d_\lambda\}=\cup_iJ_i^{\alpha_\infty}(\infty)$ and $R(\alpha_\infty)$ so that
\begin{enumerate}
\item[(i)] For each $i$, whenever  $j\in J_i^{\alpha_\infty}(\infty)$ and $l\not \in J_i^{\alpha_\infty}(\infty)$,   $|a_j(p,\lambda)-a_l(p,\lambda)|>4\alpha_\infty$, for all $p\in B_{R(\alpha_\infty)}(0)^c$. 
\item[(ii)]
For all $i$ and for all $j,m\in   J_i^{\alpha_\infty}(\infty),$ 
$\lim_{|p|\to\infty}|a_j(p,\lambda)-a_l(p,\lambda)|=0.$
\item[(iii)] $\frac{c_1(E(\lambda,\beta))}{ \rank (E(\lambda,\beta))}\in \IZ.$
\end{enumerate}

We now prove (i) and (ii). Consider  $|p|$ and $R$ large.  
Suppose that $\forall q\in S_R(0)$ and for some $j\in J_i^{\alpha}(p,\lambda)$, $a_{j+1}(q,\lambda)-a_{j}(q,\lambda)> \beta,$ with $\beta>R^{-1}$. Let $J':= J_i^{\alpha}(p,\lambda)\cap [0,j]$ and $J'' =J_k^{\alpha}(p,\lambda)\setminus J'.$ 
Then   Lemma \ref{aboveanalysis} implies  
\begin{align*} 
 \frac{1}{2i \pi}\tr   \Phi_{J'}(q)=     c_1(E_{J'})(S^2)+ O(\tau^{\frac{1}{2}}(\frac{R}{2})),%  + O(\hat\tau(R)^{\frac{1}{2}}),
\end{align*}
and
\begin{align*} 
 \frac{1}{2i \pi}\tr   \Phi_{J''}(q)=     c_1(E_{J''})(S^2)+ O(\tau^{\frac{1}{2}}(\frac{R}{2})),%  + O(\hat\tau(R)^{\frac{1}{2}}).
\end{align*}
Suppose $a_{i+1}(q,\lambda)-a_i(q,\lambda)<\frac{1}{100d_\lambda}$ for each $i,i+1\in J^\alpha_i(p,\lambda),$ $\forall q\in S_R(0)$. Then integrality implies
\begin{align*}
c_1(E_{J'})(S^2)= \frac{|J'|}{|J^\alpha_i(p,\lambda)|}c_1(E_{J^\alpha_i(p,\lambda)})(S^2)\text{ and }c_1(E_{J''})(S^2)= \frac{|J''|}{|J^\alpha_k(p,\lambda)|}c_1(E_{J^\alpha_i(p,\lambda)})(S^2),
\end{align*}
where for any  $J\subset \{1,\ldots,d_\lambda\}$,  $|J|$ denotes its cardinality. 
Then for the distinguished index $j$ introduced above, we have 
\begin{align}
&|J''|\frac{a_{j+1}}{2  \pi}\leq c_1(E_{J''})(S^2)+O(\tau^{\frac{1}{2}}(\frac{R}{2}))\nonumber\\  
\Rightarrow& \frac{a_{j+1}}{2  \pi}\leq   \frac{c_1(E_{J^\alpha_i(p,\lambda)})(S^2)}{|J^\alpha_i(p,\lambda)|}+O(\tau^{\frac{1}{2}}(\frac{R}{2})). 
\end{align} 
Similarly $$\frac{a_{j}}{2  \pi}\geq   \frac{c_1(E_{J^\alpha_i(p,\lambda)})(S^2)}{|J^\alpha_i(p,\lambda)|}+O(\tau^{\frac{1}{2}}(\frac{R}{2})).$$    
These inequalities imply 
$$a_{j+1}(q,\lambda)-a_{j}(q,\lambda)= O(\tau^{\frac{1}{2}}(\frac{R}{2})).$$ 
Hence $\forall i$ and $\forall l\in J_i^{\alpha}(p,\lambda)$, 
\begin{align}\label{ii}
a_{l}(q,\lambda)=  \frac{c_1(E_{J^\alpha_i(p,\lambda)})(S^2)}{|J^\alpha_i(p,\lambda)|}+O(\tau^{\frac{1}{2}}(\frac{R}{2})).
\end{align}
Thus, for all $i$, and for $\tau^{\frac{1}{2}}(\frac{R}{2})$ sufficiently small (depending on the distances between distinct elements of the set $\cup_{j=1}^{d_\lambda}\frac{1}{j}\IZ$) the  gaps between the spectrum of $\Phi_{J^\alpha_i(p,\lambda)}$ and $\Phi_{J^\alpha_{i+1}(p,\lambda)}$ have fixed lower bounds. 
 Consequently, for any $\alpha$ sufficiently small, there exists $R(\alpha)$ large so that for $|p|>R(\alpha)$,  $E_{J^\alpha_m(p,\lambda)}$ is defined on $B_{R(\alpha)}^c(0)$, $\forall m$, and with a uniform lower bound in the gaps in the spectrum between $r\Phi_{J^\alpha_m(p,\lambda)}(q)$ and $r\Phi_{J^\alpha_j(p,\lambda)}(q)$ for $j\not = m$, and $|q|\geq R(\alpha)$. This establishes condition (i). Condition (ii) immediately follows from \eqref{ii}.

To prove (iii), note that by Corollary \ref{shortcut}, the connection $A$ restricted to $S_R(0)$, converges as $R\to\infty$ to a Hermitian Einstein connection on $\IC\IP^1$. Hence $E(\lambda,\beta)$ is polystable, decomposing into a direct sum of line bundles with the same first Chern class. (See also \cite[Section 2]{CN22}.)
\end{proof}

\section{\texorpdfstring{$k$}{k}-centered Taub-NUT}
Let  $M$ be a $k$-centered Taub-NUT manifold, $TN_k$, with the convention that for $k=0$,  $TN_0$ is $S^1\times \IR^3$. We use the notation introduced in Section \ref{Intro} to describe the geometry of $M$. 
We orient $TN_k$ so that the volume forms on $TN_k$ and $\IR^3$ are related by
\begin{align}
 dv_M = V\Pi^*dv_{\IR^3}\wedge d\theta.
\end{align}
In particular, for a differential one-form $\phi$ annihilated by inner product with $\frac{\p}{\p \theta}$, we have 
\begin{align}\label{ast1}
\ast_M\phi = V (\ast_{\IR^3}\phi)\wedge (d\theta+\omega).
\end{align}
Henceforth we simply write $dv$ for $dv_M$. 
Let $\mathcal{R}$ denote the Riemann curvature tensor of $TN_k$.  Its norm decays cubically. 

Let $(E,A)$ be a rank $N$ hermitian vector bundle on $TN_k$, with connection whose curvature, $F_A$ 
  is square integrable and anti-self-dual.  We denote the associated covariant derivative as $\nabla^A$ or simply $\nabla$, if the connection is clear from context.   
 Set 
$$T_I(R):= \int_{\Pi^{-1}(B_R(0)^c)}|F_A|^2dv,$$
and 
$$S_I(R):= \|F_A\|_{L^\infty(\Pi^{-1}(B_R(0)^c))}.$$
Corollary \ref{coro1} extends to this context to give us $\lim_{R\to\infty}S_I(R) = 0$. Corollary \ref{moseri} and Lemma \ref{goodcoro} also extend to this context with minor changes, as we now show. Changing the symbol $\Phi$ used in \cite[Section 4.2]{First} to $\Psi$  in order to avoid confusion with the Higgs fields of preceding sections, we define for sections $\psi$ of a hermitian vector bundle over $TN_k$, 
$$\Psi(\psi)(x):= \int_{\Pi^{-1}(x)}|\psi|^2d\theta,$$ 
and compute, for some constant $c>0$, (see \cite[Eq.~(51)]{First})
\begin{align}
\Delta\frac{1}{2}|\Psi(F_A)|^2 \leq -2V|\Psi(\nabla F_A)|^2+c(\|\mathcal{R}\|_{L^\infty(\Pi^{-1}(x))}+\|F_A\|_{L^\infty(\Pi^{-1}(x))})|\Psi( F_A)|^2). 
\end{align}
This adds an $\|\mathcal{R}\|_{L^\infty(\Pi^{-1}(x))}$ term to  the estimates of Lemma \ref{moseri} and Corollary \ref{goodcoro}, but this additional $O(\frac{1}{r^3})$ term can be absorbed into the other terms. So, Corollary \ref{moseri} and Lemma \ref{goodcoro} extend to this context.

For $TN_k$  we no longer have a Higgs field $\Phi$. In the following sections, we construct a spectral truncation $\Phi^I_\lambda$ of the unbounded operator $\nabla_\theta:= \nabla_{\frac{\p}{\p\theta}}$,  analogous to $\Phi_\lambda$. We then repeat the arguments  introduced for monopoles to establish  quadratic curvature decay and to determine the asymptotic structure of the connection. As with monopoles, to define such a spectral truncation in a neighborhood of $\infty$, we must first show that the spectrum of $\nabla_\theta$ 
is asymptotically constant. In order to prove this without first having a Higgs field, we will require spectral functions analogous to $\zeta$ functions. In the next section, we recall basic facts about connections and then introduce spectral $\zeta$ functions. 

\section{Hilbert bundles and Spectral Functions}
\subsection{Geometry of Hilbert bundles}
Consider the Hilbert 
 bundle $\mathcal{L}$ over $\IR^3\setminus\{\nu_1,\ldots,\nu_k\}$  
whose fibers are $\mathcal{L}_b:= L^2(\Pi^{-1}(b),E)$. Endow this bundle with the metric $$\langle s_1,s_2\rangle_{\mathcal{L}}(b):= \int_0^{2\pi}\langle s_1,s_2\rangle(b,\theta)d\theta.$$
In particular, we have dropped the $V^{-\frac{1}{2}}$ factor that would arise from simply integrating over the fiber with respect to the Riemannian structure. 
Denote the horizontal lift of a vector field $X$ on $\IR^3$ to $TN_k$ by 
$$X^h := X - \omega(X)\frac{\p}{\p \theta}.$$
with respect to a local trivialization of the circle bundle. 
With this notation, $\mathcal{L}$ admits the metric compatible connection given by 
$$\nabla_X^{\mathcal{L}}s := \nabla_{ X^h}^As.$$
We omit the $\mathcal{L}$ superscript when no confusion should occur. The curvature $F_{\mathcal{L}}$ is given by the unbounded operator
\begin{align}
F_{\mathcal{L}}(\frac{\p}{\p x^i},\frac{\p}{\p x^j})= F_A((\frac{\p}{\p x^i})^h,(\frac{\p}{\p x^j})^h)-d\omega(\frac{\p}{\p x^i},\frac{\p}{\p x^j})\nabla^A_{ \theta}.
\end{align}
The infinite dimensional analog of the adjoint bundle of $\mathcal{L}$ is the bundle $ad(\mathcal{L})$ whose fiber at $b$ is the space of Hilbert-Schmidt operators on $\mathcal{L}_b$. This bundle is hermitian with inner product $\tr  AB^\dagger = \langle A,B\rangle$ and hermitian connection 
$$\nabla^{ad(\mathcal{L})}_XA:= [\nabla^{\mathcal{L}}_X,A],$$
and curvature 
$$F_{ad(\mathcal{L})}A := [F_{\mathcal{L}},A].$$
\subsection{Spectral functions}\label{subsecsf}
For $w>0$ and $x\in \IR^3\setminus \{\nu_1,\ldots,\nu_k\}$, set 
$$\zeta(x,w,l):= \Tr_{L^2(\Pi^{-1}(x))}(w^2-\nabla_\theta^2)^{\frac{l}{2}}.$$
Let $\mathcal{C}$ denote the two vertical lines surrounding $i\IR\subset \IC$ consisting of $(i\IR+a)\cup (i\IR-a)$, for some $a>0$, oriented counter clockwise. Then for $l\in \IN$,
\begin{align}
\zeta(x,w,2l) = \Tr_{L^2(\Pi^{-1}(x))}\frac{1}{2\pi i}\int_{\mathcal{C}}(w^2-z^2)^{l}(z-\nabla_\theta)^{-1}dz.
\end{align} 
\begin{lemma}
Let $f(z)$ be a holomorphic function on $-\delta<\Re\, z<\delta$, for some $\delta>0$. Assume  $|f(x+iy)|= O(\frac{1}{1+y^2})$  on this domain. Then 
\begin{align}\label{lemtr1}
\frac{\p}{\p x^j}\Tr\, f(\nabla_\theta) = \frac{1}{2\pi i}\Tr\, \int_\mathcal{C}f'(z)(z-\nabla_\theta)^{-1}F_A((\frac{\p}{\p x^j})^h,\frac{\p}{\p\theta}) dz,
\end{align}
and
\begin{align}\label{lemtr2}
 &\sum_j\frac{\p^2}{(\p x^j)^2}\Tr\, f(\nabla_\theta)\nonumber\\
 &=\sum_j  \frac{1}{2\pi i}\Tr\int_\mathcal{C}f'(z)(z-\nabla_\theta)^{-1}dz \left(F_A(\nabla_{(\frac{\p}{\p x^j})^h}(\frac{\p}{\p x^j})^h,\frac{\p}{\p\theta}) +F_A((\frac{\p}{\p x^j})^h,\nabla_{(\frac{\p}{\p x^j})^h}\frac{\p}{\p\theta}) \right)  \nonumber\\
&\quad +  \sum_j  \frac{1}{2\pi i}\Tr\int_\mathcal{C}f'(z) (z-\nabla_\theta)^{-1}F_A((\frac{\p}{\p x^j})^h,\frac{\p}{\p\theta})(z-\nabla_\theta)^{-1}F_A((\frac{\p}{\p x^j})^h,\frac{\p}{\p\theta}) dz.\end{align}
\end{lemma}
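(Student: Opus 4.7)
The plan is to derive both formulas by differentiating the contour representation
$$ f(\nabla_\theta)=\frac{1}{2\pi i}\int_{\mathcal C} f(z)(z-\nabla_\theta)^{-1}\,dz,$$
which is valid on the strip because $\nabla_\theta$ is skew-adjoint (so its spectrum lies in $i\IR$ and the resolvent is bounded on $\mathcal C$), and because the hypothesized decay $f(x+iy)=O(1/(1+y^2))$ makes everything trace-class and integrable in $z$. First I would verify the ``trace commutes with derivative'' identity: because the trace map $\tr:ad(\mathcal L)\to \underline{\IC}$ intertwines $\nabla^{ad(\mathcal L)}$ with the trivial connection, one has $\partial_j\Tr B=\Tr\nabla^{ad}_{\partial/\partial x^j}B$ for any trace-class section $B$. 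Applying this to $B=(z-\nabla_\theta)^{-1}$ and using the standard resolvent identity
$$\nabla^{ad}_{\partial/\partial x^j}(z-\nabla_\theta)^{-1}=(z-\nabla_\theta)^{-1}[\nabla^{\mathcal L}_{\partial/\partial x^j},\nabla_\theta](z-\nabla_\theta)^{-1},$$
reduces the computation to identifying the commutator $[\nabla^{\mathcal L}_{\partial/\partial x^j},\nabla_\theta]$.

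Since $\omega$ is pulled back from the base, $[(\partial/\partial x^j)^h,\partial/\partial\theta]=0$, so this commutator is just the curvature $F_A((\partial/\partial x^j)^h,\partial/\partial\theta)$. Combining this with cyclicity of trace produces $\Tr[(z-\nabla_\theta)^{-2}F_A((\partial/\partial x^j)^h,\partial/\partial\theta)]$, and an integration by parts in $z$ (using $(z-\nabla_\theta)^{-2}=-\frac{d}{dz}(z-\nabla_\theta)^{-1}$; boundary terms on $\mathcal C$ vanish by the decay of $f$) converts this to \eqref{lemtr1}.

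For \eqref{lemtr2} I would apply $\partial_j$ once more to the integrand of \eqref{lemtr1} and use the Leibniz rule on $\nabla^{ad}_{\partial/\partial x^j}$. Two classes of terms arise. The first, when the derivative hits the resolvent, produces (after the same resolvent identity and Step 3) the double-resolvent term on the third line of \eqref{lemtr2}. The second, when the derivative hits $F_A((\partial/\partial x^j)^h,\partial/\partial\theta)$ viewed as an endomorphism-valued function of $x$, expands via the Leibniz rule for the curvature $F_A$ regarded as an $\End(E)$-valued $2$-form on $M$ into the three pieces
$$(\nabla^A_{(\partial/\partial x^j)^h}F_A)((\partial/\partial x^j)^h,\partial/\partial\theta)+F_A(\nabla_{(\partial/\partial x^j)^h}(\partial/\partial x^j)^h,\partial/\partial\theta)+F_A((\partial/\partial x^j)^h,\nabla_{(\partial/\partial x^j)^h}\partial/\partial\theta),$$
where the Levi-Civita connection on $M$ enters because the horizontal lift and $\partial/\partial\theta$ are not parallel in $M$.

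The key point, and the step I expect to be the main obstacle, is showing that the ``Bianchi piece'' $\sum_j (\nabla^A_{(\partial/\partial x^j)^h}F_A)((\partial/\partial x^j)^h,\partial/\partial\theta)$ vanishes, for it is absent in the claimed formula. I would introduce the orthonormal frame $e_i=V^{-1/2}(\partial/\partial x^i)^h$ ($i=1,2,3$), $e_4=V^{1/2}\partial/\partial\theta$ adapted to the Taub-NUT metric and observe that
$$(d_A^*F_A)(e_4)=-\sum_{\alpha=1}^4(\nabla^A_{e_\alpha}F_A)(e_\alpha,e_4)=-V^{-1/2}\sum_{j=1}^3(\nabla^A_{(\partial/\partial x^j)^h}F_A)((\partial/\partial x^j)^h,\partial/\partial\theta),$$
the $\alpha=4$ term dropping out by antisymmetry of $F_A$. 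Since the connection is anti-self-dual, $d_A^*F_A=-*d_A*F_A=*d_AF_A=0$ by the Bianchi identity, so the Bianchi piece vanishes. Assembling the surviving terms yields \eqref{lemtr2}. The remaining technicalities, absolute convergence of the $z$-integral, justification of interchanging $\partial_j$ with $\int_{\mathcal C}$ and with the trace, and validity of integration by parts in $z$, all reduce to the decay hypothesis on $f$ and the Hilbert--Schmidt bounds on the resolvents.
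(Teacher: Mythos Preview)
Your proposal is correct and follows essentially the same approach as the paper: holomorphic functional calculus, the resolvent identity for $[\nabla^{\mathcal L}_{\partial/\partial x^j},(z-\nabla_\theta)^{-1}]$, cyclicity of the trace, and integration by parts in $z$. The paper justifies the ``trace commutes with derivative'' step slightly differently, by picking a horizontally parallel section $\phi$ and differentiating $\int_{\Pi^{-1}(x)}\langle (z-\nabla_\theta)^{-1}\phi,\phi\rangle\,d\theta$ directly, but this is equivalent to your abstract formulation via $\nabla^{ad(\mathcal L)}$. On the key point you flagged---the vanishing of the Bianchi piece $\sum_j(\nabla^A_{(\partial/\partial x^j)^h}F_A)((\partial/\partial x^j)^h,\partial/\partial\theta)$---you are in fact \emph{more} explicit than the paper, which passes silently from $\sum_j[\nabla_{(\partial/\partial x^j)^h},F_A((\partial/\partial x^j)^h,\partial/\partial\theta)]$ to the two surviving Leibniz terms; your identification of this sum with $-V^{1/2}(d_A^*F_A)(e_4)=0$ via the Yang--Mills equation is the right justification.
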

\begin{proof}
We use the holomorphic functional calculus to study the trace: 
\begin{align}
 \Tr\, f(\nabla_\theta) =  \frac{1}{2\pi i}\int_\mathcal{C}f(z)(z-\nabla_\theta)^{-1}dz. 
\end{align}
We first make  simple observations on the derivatives of traces. 
 Let $\phi$ be a section of $E$ defined on  $\Pi^{-1}(B_{\delta}(x))$ for some $\delta>0$ and 
 satisfying $\nabla_{(\frac{\p}{\p x^j})^h} \phi = 0$ on $\Pi^{-1} (x)$.  
\begin{align}
  &\frac{\p}{\p x^j} \int_{\Pi^{-1}(x)}\langle (z-\nabla_\theta)^{-1}\phi,\phi\rangle d\theta=  \int_{\Pi^{-1}(x)}(\frac{\p}{\p x^j})^h\langle (z-\nabla_\theta)^{-1}\phi,\phi\rangle d\theta\nonumber\\
&=  \int_{\Pi^{-1}(x)} \langle \nabla_{(\frac{\p}{\p x^j})^h}(z-\nabla_\theta)^{-1}\phi,\phi\rangle =  \int_{\Pi^{-1}(x)} \langle [\nabla_{(\frac{\p}{\p x^j})^h},(z-\nabla_\theta)^{-1}]\phi,\phi\rangle d\theta\nonumber\\
&=  \int_{\Pi^{-1}(x)} \langle (z-\nabla_\theta)^{-1} F_A((\frac{\p}{\p x^j})^h,\frac{\p}{\p\theta})(z-\nabla_\theta)^{-1}\phi,\phi\rangle d\theta.
\end{align}
From this, it is easy to see that 
\begin{align}
\frac{\p}{\p x^j}\Tr\, f(\nabla_\theta) &=  \frac{1}{2\pi i}\Tr\int_\mathcal{C}f(z)(z-\nabla_\theta)^{-1}F_A((\frac{\p}{\p x^j})^h,\frac{\p}{\p\theta})(z-\nabla_\theta)^{-1}dz\nonumber\\
&=  \frac{1}{2\pi i}\Tr\int_\mathcal{C}f(z)(z-\nabla_\theta)^{-2}F_A((\frac{\p}{\p x^j})^h,\frac{\p}{\p\theta}) dz\nonumber\\
&=  \frac{1}{2\pi i}\Tr\int_\mathcal{C}f'(z)(z-\nabla_\theta)^{-1}F_A((\frac{\p}{\p x^j})^h,\frac{\p}{\p\theta}) dz.
\end{align}
Similarly, 
\begin{align}
 &\sum_j\frac{\p^2}{(\p x^j)^2}\Tr\, f(\nabla_\theta)\nonumber\\
 &=  \sum_j  \frac{1}{2\pi i}\Tr\int_\mathcal{C}f'(z)[\nabla_{(\frac{\p}{\p x^j})^h},(z-\nabla_\theta)^{-1}F_A((\frac{\p}{\p x^j})^h,\frac{\p}{\p\theta})] dz\nonumber\\
&=  \sum_j  \frac{1}{2\pi i}\Tr\int_\mathcal{C}f'(z) (z-\nabla_\theta)^{-1}F_A((\frac{\p}{\p x^j})^h,\frac{\p}{\p\theta})(z-\nabla_\theta)^{-1}F_A((\frac{\p}{\p x^j})^h,\frac{\p}{\p\theta}) dz\nonumber\\
&+  \sum_j  \frac{1}{2\pi i}\Tr\int_\mathcal{C}f'(z)(z-\nabla_\theta)^{-1}[\nabla_{(\frac{\p}{\p x^j})^h},F_A((\frac{\p}{\p x^j})^h,\frac{\p}{\p\theta})] dz\nonumber\\
&=  \sum_j  \frac{1}{2\pi i}\Tr\int_\mathcal{C}f'(z) (z-\nabla_\theta)^{-1}F_A((\frac{\p}{\p x^j})^h,\frac{\p}{\p\theta})(z-\nabla_\theta)^{-1}F_A((\frac{\p}{\p x^j})^h,\frac{\p}{\p\theta}) dz\nonumber\\
&+  \sum_j  \frac{1}{2\pi i}\Tr\int_\mathcal{C}f'(z)(z-\nabla_\theta)^{-1}dz [F_A(\nabla_{(\frac{\p}{\p x^j})^h}(\frac{\p}{\p x^j})^h,\frac{\p}{\p\theta})+F_A((\frac{\p}{\p x^j})^h,\nabla_{(\frac{\p}{\p x^j})^h}\frac{\p}{\p\theta})].\end{align}
\end{proof}

\begin{proposition}
The spectrum of $\nabla_\theta$ on $L^2(\Pi^{-1}(p))$ converges to a limiting spectrum as $|p|\to \infty.$
\end{proposition}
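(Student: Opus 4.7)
The plan is to imitate Proposition \ref{togap} and Corollary \ref{spec0} from the monopole case, with $\Tr f(\nabla_\theta)$ for suitable test functions $f$ playing the role of $\tr\Phi^m$. The preceding lemma computes the Euclidean Laplacian of $x\mapsto\Tr f(\nabla_\theta)(x)$ on the base in terms of fiberwise traces involving resolvents of $\nabla_\theta$ and components of $F_A$, and so serves as the analog of the Bochner identity $|\Delta\tr\Phi^m|\leq c_{m,A}|F_A|^2$ used in the monopole proof.

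First I would fix a holomorphic test function $f$ on a strip about $i\IR$ satisfying the hypotheses of the preceding lemma---concretely $f(z) = (w^2 - z^2)^{-N}$ for some integer $N\geq 1$ (so that $f(\nabla_\theta)$ is trace class on each $\mathcal{L}_b$ uniformly in $b$ by the linear growth of $\mathrm{spec}(\nabla_\theta)$), or a smooth compactly supported function of $-i\nabla_\theta$ represented through the Helffer--Sj\"ostrand calculus. For $p,q$ at large radius in $\IR^3\setminus\{\nu_1,\ldots,\nu_k\}$, I would apply the Euclidean Green's identity on $B_L(0)\setminus(B_{R_0}(0)\cup\bigcup_\sigma B_\epsilon(\nu_\sigma))$ and pass to the limits $L\to\infty$, $\epsilon\to 0$ to get
\begin{align*}
4\pi\bigl(\Tr f(\nabla_\theta)(p)-\Tr f(\nabla_\theta)(q)\bigr) = \int\Bigl(\tfrac{1}{r_p}-\tfrac{1}{r_q}\Bigr)\Delta\Tr f(\nabla_\theta)\,dv_{\IR^3} + (\text{boundary terms}),
\end{align*}
with the $L$-sphere and $\nu_\sigma$-sphere boundary contributions handled as in Proposition \ref{togap} together with local regularity of $f(\nabla_\theta)$.

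Next I would bound $\Delta\Tr f(\nabla_\theta)$ using the preceding lemma. In the leading term $\Tr f'(z)(z-\nabla_\theta)^{-1}F_A(z-\nabla_\theta)^{-1}F_A$, the operator $(z-\nabla_\theta)^{-1}F_A((\cdot)^h,\tfrac{\p}{\p\theta})$ is Hilbert--Schmidt with norm squared controlled---via the $S^1$-Green's kernel of $z-\nabla_\theta$ composed with the fiberwise $L^\infty$-multiplication by $F_A$---by the fiber integral $\Psi(F_A)(x)$; the commutator-type terms involving $\nabla_{(\frac{\p}{\p x^j})^h}(\tfrac{\p}{\p x^j})^h$ and $\nabla_{(\frac{\p}{\p x^j})^h}\tfrac{\p}{\p\theta}$ contribute only $O(r^{-2}S_I(|x|))$, since $|d\omega| = |{\ast}dV| = O(r^{-2})$. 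The change of variables $dv_M = V\,\Pi^*dv_{\IR^3}\wedge d\theta$, with $V$ bounded near infinity, then turns the leading part of the Green's integral into a bounded multiple of $\int_M\tfrac{1}{r_p}|F_A|^2\,dv_M$---precisely the integral controlled in Proposition \ref{togap}. Splitting into a ball about $p$ plus its complement, using $F_A\in L^2(M)$ and the instanton extensions of Corollary \ref{coro1} and Corollary \ref{goodcoro} already noted in this section, drives the Green's integral to $0$ as $|p|,|q|\to\infty$; hence $\Tr f(\nabla_\theta)(p)$ is Cauchy.

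Finally, Cauchy convergence of $\Tr f(\nabla_\theta)$ for a sufficient class of $f$ forces spectral convergence. Because $\nabla_\theta$ is a skew-adjoint first-order operator on $S^1$ (a bounded zero-order perturbation of $\tfrac{\p}{\p\theta}$ after local trivialization of $E$ along the fiber), it has discrete, linearly growing spectrum on each $\mathcal{L}_b$. Any spectral projection onto a bounded interval $I$ whose endpoints are bounded away from the prospective limiting spectrum is a contour integral of $(z-\nabla_\theta)^{-1}$ and can be produced from $f(\nabla_\theta)$ by approximating $\chi_I$ with a sequence of admissible $f$'s; Cauchy convergence of the corresponding traces then pins down the eigenvalues of $\nabla_\theta$ inside $I$ with their multiplicities in the limit $|p|\to\infty$. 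The main obstacle is the Hilbert--Schmidt bookkeeping of the previous paragraph: although $F_A((\cdot)^h,\tfrac{\p}{\p\theta})$ is only a bounded (not Hilbert--Schmidt) multiplication operator on each fiber, one must extract from the Laplacian formula the fact that its composition with the Green's kernel of $(z-\nabla_\theta)^{-1}$ is Hilbert--Schmidt with norm governed by $\Psi(F_A)$ alone, so that $F_A\in L^2(M)$ suffices to close the Green's argument---mirroring exactly the role of $\int|F_A|^2<\infty$ in Proposition \ref{togap}.
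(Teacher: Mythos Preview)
Your proposal is correct and follows the paper's approach: compute $\Delta\Tr f(\nabla_\theta)$ via the preceding lemma, run the Green's-function argument of Proposition~\ref{togap} to get Cauchy convergence of $\Tr f(\nabla_\theta)$, and deduce spectral convergence from trace convergence. The paper differs only in execution: it expands everything in a $\nabla_\theta$-eigenframe $\{e^{im\theta}s_a\}$ rather than using your abstract Hilbert--Schmidt bookkeeping, and for the final step it specializes to $f(z)=(u-iv+z)^{-2}$ and exhibits the explicit lower bound $|\sum_{\lambda\in\bar\lambda_a}f(i\lambda)|\geq (u^2+(\mu_a-v)^2)^{-1}-\pi^2$, which for small $u$ directly counts the holonomy eigenvalues near any given $v$. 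This is a cleaner endgame than your $\chi_I$-approximation, which as written has a mild circularity---you assume $\partial I$ is uniformly bounded away from the spectrum before you know the spectrum converges---but that is easily patched by first noting that the holonomy eigenvalues live in the compact set $[0,1)$, extracting subsequential limits, and then using trace convergence to show all subsequential limits coincide.
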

\begin{proof}
Let $\{e^{-2\pi i\lambda_a(x)}\}_{a=1}^N$ denote the eigenvalues of the holonomy of $\nabla_\theta $ around $\Pi^{-1}(x)$. Let $\{s_a\}_{a=1}^N$ be a smooth frame for $E|_{\Pi^{-1}(x)}$, satisfying $\nabla_\theta s_a =i\lambda_a s_a$, with $\lambda_a\in [0,1)$. Then a unitary basis for $L^2(\frac{d\theta}{2\pi})$ sections of $E$ on $\Pi^{-1}(x)$ is given by  $\cup_{a=1}^N\{e^{ im\theta}s_a\}_{m=-\infty}^\infty.$ 
Hence, to prove convergence of the spectrum of $\nabla_\theta$, it suffices to prove the convergence of the holonomy spectrum.

Expand $F_A((\frac{\p}{\p x^j})^h,\frac{\p}{\p\theta})$ as 
$$F_A((\frac{\p}{\p x^j})^h,\frac{\p}{\p\theta}) = e^{i\mu \theta}(F_{j\theta}^\mu )_a^bs_b\langle \cdot, s_a\rangle .$$
With this notation, we rewrite \eqref{lemtr1} and \eqref{lemtr2} as

\begin{align}\label{elemtr1}
\frac{\p}{\p x^j}\Tr\, f(\nabla_\theta) &= \sum_{m,a}\frac{1}{2\pi i}  \int_\mathcal{C}f'(z)(z-i(m+\lambda_a))^{-1}dz(F_{j\theta}^0)_a^a  \nonumber\\
&= \sum_{m,a} f'(i(m+\lambda_a)) (F_{j\theta}^0)_a^a ,
\end{align}
and
\begin{align}\label{elemtr2}
 &\sum_j\frac{\p^2}{(\p x^j)^2}\Tr\, f(\nabla_\theta) \nonumber \\
 =&\sum_{j,m,a}  f'(i(m+\lambda_a)) \left(F_A^0(\nabla_{(\frac{\p}{\p x^j})^h}(\frac{\p}{\p x^j})^h,\frac{\p}{\p\theta})_a^a  +F_A^0((\frac{\p}{\p x^j})^h,\nabla_{(\frac{\p}{\p x^j})^h}\frac{\p}{\p\theta})_a^a\right)  \nonumber \\
&-  \sum_{j,a,b,m,\mu }   \frac{f'(i(m+\lambda_a))-f'(i(\mu +m+\lambda_b))}{i(\lambda_a-\lambda_b-\mu )}|(F_{j\theta}^\mu )_a^b|^2.
\end{align}
Multiply \eqref{elemtr2} by $\frac{1}{r_p}-\frac{1}{r_q}$, integrate over $B_R(0)\setminus B_L(0)$, with $L<|p|\leq |q|<R,$ and use the divergence theorem to obtain the following. 
\begin{align}\label{delemtr2.0}
&4\pi \Tr_{L^2(\Pi^{-1}(p)) }f(\nabla_\theta)-4\pi \Tr_{L^2(\Pi^{-1}(q))} f(\nabla_\theta) \nonumber \\
 &+\int_{S_R(0)} \left(\frac{r_q-r_p}{r_pr_q}\frac{\p}{\p r}+\frac{1}{r_p^2}\frac{\p r_p}{\p r}
 -\frac{1}{r_q^2}\frac{\p r_q}{\p r}\right)\Tr\, f(\nabla_\theta)d\sigma\nonumber \\
 &-\int_{S_L(0)} (\frac{r_q-r_p}{r_pr_q}\frac{\p}{\p r}+\frac{1}{r_p^2}\frac{\p r_p}{\p r}
 -\frac{1}{r_q^2}\frac{\p r_q}{\p r})\Tr\, f(\nabla_\theta)d\sigma\nonumber \\
 =&\int_{B_R(0)\setminus B_L(0)}
     \left[\sum_{j,m,a}  f'(i(m+\lambda_a)) 
     \left(F_A^0(\nabla_{(\frac{\p}{\p x^j})^h}(\frac{\p}{\p x^j})^h,\frac{\p}{\p\theta})_a^a\right.\right.\nonumber \\
   &  \hspace{5cm}\left.  +F_A^0((\frac{\p}{\p x^j})^h,\nabla_{(\frac{\p}{\p x^j})^h}\frac{\p}{\p\theta})_a^a
     \right)   \nonumber \\
 &\left. -  \sum_{j,a,b,m,\mu }   \frac{f'(i(m+\lambda_a))-f'(i(\mu +m+\lambda_b))}{i(\lambda_a-\lambda_b-\mu )}|(F_{j\theta}^\mu )_a^b|^2\right](\frac{1}{r_p}-\frac{1}{r_q})dv.
\end{align}
The $S_R(0)$ integrals are $o(1)$ as $R\to\infty$. Hence, we can take the $R\to\infty$ limit to obtain
\begin{align}\label{ldelemtr2}
 &4\pi \Tr_{L^2(\Pi^{-1}(p))} f(\nabla_\theta)-4\pi \Tr_{L^2(\Pi^{-1}(q))} f(\nabla_\theta)\nonumber \\
=&\int_{S_L(0)} (\frac{r_q-r_p}{r_pr_q}\frac{\p}{\p r}+\frac{1}{r_p^2}\frac{\p r_p}{\p r}-\frac{1}{r_q^2}\frac{\p r_q}{\p r})\Tr\, f(\nabla_\theta)d\sigma\nonumber \\
+&   \int_{ B_L(0)^c}(\frac{1}{r_p}-\frac{1}{r_q})[\sum_{j,m,a}  f'(i(m+\lambda_a) \left(F_A^0(\nabla_{(\frac{\p}{\p x^j})^h}(\frac{\p}{\p x^j})^h,\frac{\p}{\p\theta})_a^a  \right.\nonumber \\
 &\hspace{5.2cm}\left.+F_A^0((\frac{\p}{\p x^j})^h,\nabla_{(\frac{\p}{\p x^j})^h}\frac{\p}{\p\theta})_a^a \right) \nonumber \\
&-  \sum_{j,a,b,m,\mu }   \frac{f'(i(m+\lambda_a))-f'(i(\mu +m+\lambda_b))}{i(\lambda_a-\lambda_b-\mu )}|(F_{j\theta}^\mu )_a^b|^2]dv.
\end{align}
Repeating the estimates of Proposition \ref{togap} for $L$ fixed gives 
\begin{align}\label{delemtr2}
& \Tr_{L^2(\Pi^{-1}(p)) }f(\nabla_\theta)-  \Tr_{L^2(\Pi^{-1}(q))} f(\nabla_\theta) = O(\frac{1}{|p|}+T_I(\frac{|p|}{4})).
\end{align}
We note that the implicit constants in $O(\frac{1}{|p|}+T_I(\frac{|p|}{4}))$ depend on the choice of $f$. For example, if $f(z) = (u-iv+z)^{-l}$ for $0<u<1$, $v\in \IR$, $l\geq 2$, making the $u$ dependence explicit gives 
$$\Tr_{L^2(\Pi^{-1}(p)) }f(\nabla_\theta)-  \Tr_{L^2(\Pi^{-1}(q))} f(\nabla_\theta) = O(u^{-3}(\frac{1}{|p|}+T_I(\frac{|p|}{4}))).  $$
Taking the limit as $|q|\to\infty$ in \eqref{delemtr2} gives a limiting value for the trace which satisfies:
\begin{align}\label{ldelemtr2.0}
&   \lim_{|q|\to\infty}\Tr_{L^2(\Pi^{-1}(q))} f(\nabla_\theta) =  \Tr_{L^2(\Pi^{-1}(p))} f(\nabla_\theta)+O(\frac{1}{|p|}+T_I(\frac{|p|}{4})).
\end{align}
Since the traces converge to a limiting value, so too does the spectrum of $\nabla_\theta$. For finite rank operators, this is immediate. It is also easy to show in our context. 
The spectrum of $\nabla_\theta$ on $L^2(L^2(\Pi^{-1}(p))$ decomposes into $N$ (possibly with multiplicity) residue classes mod $i\IZ$. Denote these classes $\{i\bar\lambda_a(p)\}_{a=1}^N$. We correspondingly decompose 
$$\Tr_{L^2(\Pi^{-1}(p))} f(\nabla_\theta)=\sum_{a=1}^N\sum_{\lambda\in \bar\lambda_a(p)}f(i\lambda).$$
Specialize now to $f(z)= (u-iv+z)^{-2}$, with $u,v\in \IR$ and $u>0$. Choose $\mu_a(p)\in \bar\lambda_a$ that minimizes $|\lambda-v|$ for $\lambda\in \bar\lambda_a(p).$ Then we have 
\begin{align}|\sum_{\lambda\in \bar\lambda_a(p)}f(i\lambda)|&\geq   |(u+i(\mu_a-v))^{-2}|-|\sum_{m\in\IZ\setminus\{0\}}^\infty(u +i(\mu_a-v)+im)^{-2}|\nonumber\\
&\geq   (u^2+ (\mu_a-v)^2)^{-1}- \sum_{m\in\IZ\setminus\{0\}}^\infty(u^2 + (m-\frac{1}{2})^2)^{-2}\nonumber\\
&\geq   (u^2+ (\mu_a-v)^2)^{-1}-  \frac{\pi^{4}}{90} .
\end{align}
Thus we can determine the number of $\mu_a$ near $v$, for any  $v$, for $R$ sufficiently large (depending on $u$). Therefore  the convergence of the traces implies the convergence of the $i\bar\lambda_a(p)$ and hence the convergence of the spectrum of $\nabla_\theta.$ 
\end{proof}
Write $\spec (\nabla_\theta(\infty))$ for the limiting spectrum. 
\section{Spectral Truncation for Instantons}
Having achieved spectral convergence, we now define well behaved spectral truncations. Let $4\delta$ denote the smallest distance  between 2 distinct elements   of $\spec(\nabla_\theta(\infty))$. Let $i\lambda\in \spec (\nabla_\theta(\infty)).$ Then there exists $R_0>0$, so that for $|x|\geq R_0$, $-i\nabla_\theta$ has no spectrum in 
 $\mathop{\cup}\limits_{i\lambda \in \spec(\nabla_\theta(\infty))} \left(  [\lambda- 2\delta , \lambda- \delta ]\cup [\lambda+  \delta  , \lambda+ 2\delta]\right).$   

Given $i\lambda\in Spec(\nabla_\theta(\infty))$, define 
$$P_\lambda^I:= \frac{1}{2\pi i}\int_{|z|=\delta}(z+i\lambda-\nabla_\theta)^{-1}dz,$$
$$\Phi^I_\lambda:= \frac{1}{2\pi i}\int_{|z|=\delta}z(z+i\lambda-\nabla_\theta)^{-1}dz,$$
$$F^\lambda :=P^I_\lambda F_AP^I_\lambda.$$
Summing over $i\lambda\in \spec (\nabla_\theta(\infty)),$ define 
$$\Phi^I_D:=\sum_\lambda\Phi^I_\lambda,$$
$$F^D:= \sum_\lambda F^\lambda, \text{ and }F^B = F_A-F^D.$$
$$T_I^D(R):= \int_{B_R^c(0)}|F^D|^2dv \text{ and }T_I^B(R):= \int_{B_R^c(0)}|F^B|^2dv.$$
We now detail and exploit the analogy between $\Phi^I_\lambda$ in the instanton case and $\Phi_\lambda$ in the monopole case. 
The proof of \eqref{offdiag2} extends without modification to instantons, giving for $R_2>R_1>\frac{3R_2}{4}$, and $\frac{R_2}{2}>R_{00}$, that $\forall n\in \IN$, $\exists C_n>0$ such that 
\begin{align}\label{offdiagI}
 T^B_I(R_2)&\leq  \delta^{-2}64(\frac{1}{(R_2-R_1)^2}+S_I(R_1))T_I(R_1)\\
&\leq  \delta^{-2}C_n(\frac{1}{ (R_2-R_1)^2}+S_I(\frac{R_2}{2})^{(\frac{3}{4})^{n}}T_I(\frac{R_2}{2})^{2-2(\frac{3}{4})^{n}})T_I(R_1).
\end{align}  
Compute
\begin{align*}
&[\nabla_{(\frac{\p}{\p x^j})^h},\Phi^I_\lambda]  \\
= &\frac{1}{2\pi i} \int_{|z|=\delta}z(z+i\lambda-\nabla_\theta)^{-1}[\nabla_{(\frac{\p}{\p x^j})^h},\nabla_\theta](z+i\lambda-\nabla_\theta)^{-1}dz \\
= &\frac{1}{2\pi i} \int_{|z|=\delta}z(z+i\lambda-\nabla_\theta)^{-1} F_A((\frac{\p}{\p x^j})^h,\frac{\p}{\p \theta})(z+i\lambda-\nabla_\theta)^{-1}dz \\
= &\frac{1}{2\pi i} \int_{|z|=\delta}z(z+i\lambda-i\lambda_b-i\mu-im)^{-1} e^{i(\mu +m)\theta}\times \\
&  \times(F_{j\theta}^\mu )_a^bs_b(z+i\lambda -i\lambda_a-im)^{-1}dz \langle ,e^{im\theta}s_a\rangle_{L^2(S^1)} \\
=&  \left[\sum_{|\lambda-\lambda_b-\mu-m|<\delta}\frac{ \lambda_b+\mu+m -\lambda}{\lambda_b-\lambda_a +\mu}  -\sum_{|\lambda-\lambda_a-m|<\delta}\frac{\lambda_a+m-\lambda}{\lambda_b-\lambda_a +\mu}\right] e^{i(\mu +m)\theta}\times\\
&\hfill \times(F_{j\theta}^\mu )_a^bs_b\langle ,e^{im\theta}s_a\rangle_{L^2(S^1)} \\
= &   P^I_\lambda F_{j\theta}P^I_\lambda 
+    \sum_{\substack{|\lambda-\lambda_b-\mu-m|<\delta\\|\lambda-\lambda_a-m|>\delta}}\frac{ \lambda_b+\mu+m -\lambda}{\lambda_b-\lambda_a +\mu}    e^{i(\mu +m)\theta}(F_{j\theta}^\mu )_a^bs_b\langle ,e^{im\theta}s_a\rangle_{L^2(S^1)} \\
&-    \sum_{\substack{|\lambda-\lambda_b-\mu-m|>\delta\\|\lambda-\lambda_a-m|<\delta}}\frac{\lambda_a+m-\lambda}{\lambda_b-\lambda_a +\mu}  e^{i(\mu +m)\theta}(F_{j\theta}^\mu )_a^bs_b\langle ,e^{im\theta}s_a\rangle_{L^2(S^1)} \\
= &  P^I_\lambda F_{j\theta}P^I_\lambda
+O\left(|P^I_\lambda F_{j\theta}(I-P^I_\lambda)| + |(I-P^I_\lambda) F_{j\theta} P^I_\lambda |\right).
\end{align*}

 \begin{align}\label{deriv1}
\frac{\p}{\p x^j}\frac{1}{2}|\Phi^I_\lambda|^2& = -\frac{\p}{\p x^j}\Tr\frac{1}{2\pi i}\int_{|z|=\delta}\frac{z^2}{2}(z+i\lambda-\nabla_\theta)^{-1}dz %\nonumber\\
 = \langle\Phi^I_\lambda,F_{j\theta}\rangle.
 \end{align}

\begin{align}\label{amorpho}
\Delta\frac{1}{2}\Tr\, (\Phi^I_\lambda)^2 &= - \Tr\frac{1}{2\pi i}\int_{|z|=\delta}z(z+i\lambda-\nabla_\theta)^{-1}F_{j\theta}(z+i\lambda-\nabla_\theta)^{-1}F_{j\theta}dz\nonumber\\
&\quad -  \Tr\frac{1}{2\pi i}\int_{|z|=\delta}z(z+i\lambda-\nabla_\theta)^{-1}[\nabla_{(\frac{\p}{\p x^j})^h},F_{j\theta}]dz\nonumber\\
&=  \sum_{\substack{|\lambda- m- \lambda_a|<\delta\\|\lambda- m-\mu- \lambda_a|<\delta}} 
|(F_{j\theta}^\mu)_a^b|^2  \nonumber\\
&\quad - \sum_{\substack{|\lambda- m- \lambda_a|<\delta\\|\lambda- m-\mu- \lambda_a|>\delta}}  ( \lambda-m-\lambda_a ) (\lambda_a  -\mu-\lambda_b)^{-1}
|(F_{j\theta}^\mu)_a^b|^2   \nonumber\\
&\quad - \sum_{\substack{|\lambda- m- \lambda_a|>\delta\\|\lambda- m-\mu- \lambda_a|<\delta}}  (\lambda-m-\mu-\lambda_b)
(\mu+\lambda_b  -\lambda_a)^{-1}|(F_{j\theta}^\mu)_a^b|^2 dz \nonumber\\
&\quad -  \Tr\Phi^I_\lambda[\nabla_{(\frac{\p}{\p x^j})^h},F_{j\theta}].
\end{align}
We examine the last term. We compute 
\begin{align}
 &\sum_j \left(\nabla_{(\frac{\p}{\p x^j})^h}(\frac{\p}{\p x^j})^h\right)\wedge \frac{\p}{\p \theta}+ (\frac{\p}{\p x^j})^h\wedge \nabla_{ (\frac{\p}{\p x^j})^h}\frac{\p}{\p\theta} \nonumber\\
& = -\sum_j V^{-1}V_{,j}(\frac{\p}{\p x^j})^h \wedge \frac{\p}{\p \theta}+ (\frac{\p}{\p x^j})^h\wedge \frac{1}{2}  V^{-2}  (\omega_{i,j}-\omega_{j,i}) (\frac{\p}{\p x^i})^h \nonumber\\
& = \frac{k}{2lr^2}[(\frac{\p}{\p r})^h \wedge \frac{\p}{\p \theta}- \frac{1}{2l}  \ast dr(\frac{\p}{\p x^i},\frac{\p}{\p x^j})(\frac{\p}{\p x^i})^h\wedge (\frac{\p}{\p x^j})^h ]+O(r^{-3}).
\end{align}
Up to the $O(r^{-3})$ error term, this is metrically dual to the anti-self-dual  two-form  
\begin{align}
 &  -V^{-1}dV \wedge d\theta + \ast_{\IR^3}dV.
\end{align}
Hence  the value of \(F_A\) on this sum of two bivector is twice its value on the first bivector, so 
\begin{align}\label{verde}
[\nabla_{(\frac{\p}{\p x^j})^h},F_{j\theta}] = \frac{k}{lr^2}F_A((\frac{\p}{\p r})^h ,\frac{\p}{\p \theta})+O(r^{-5}),
\end{align}
and for any section $\phi$ of $ad(E)$, we have 
\begin{align}\label{topcharge}
\int_{S_R(0)}\Tr\, \phi [\nabla_{(\frac{\p}{\p x^j})^h},F_{j\theta}]d\sigma = \frac{k}{R^2}\int_{S_R(0)}\Tr\, \phi  F_{A}+\int_{S_R(0)}O(R^{-3}|\phi||F_A|)d\sigma  ,
\end{align}
where we have used the fact that $ -V^{-1}dV \wedge d\theta + \ast_{\IR^3}dV $ is the projection of $2\ast_{\IR^3}dV$ onto the anti-self-dual forms. With these observations, \eqref{amorpho} yields the following estimate. 
\begin{align}\label{bracket}
&\frac{1}{2}| F^\lambda |^2+O(r^{-2}|\Phi^I_\lambda||F_A|) -|F^B|^2 \leq \Delta\frac{1}{2}\Tr\, (\Phi^I_\lambda)^2\leq  \frac{1}{2}| F^\lambda |^2+O(r^{-2}|\Phi^I_\lambda||F_A|)+ |F^B|^2.
\end{align}
The $O(r^{-2}|\Phi^I_\lambda||F_A|)$ term is the only term in this expression that does not have a comparable term in the monopole analysis, but it is decaying more rapidly  than the expected $O(r^{-4})$ for the $O(|F_A|^2)$ terms. Hence, it does not materially alter the analysis carried out for monopoles.  Following the monopole argument in Section 4, and keeping track of the additional factors of $\frac{1}{2}$ in the instanton case, equation \eqref{sunwtintl} becomes 
\begin{align}\label{refuel}
 &\frac{1}{2}T^D_I(R)-T^B_I(R)-C\int_{B_R(0)^c}r^{-2}|\Phi^I_D||F_D|dv\nonumber\\
&\leq -\frac{1}{4}R\frac{\p}{\p R}T^D_I(R)+\frac{1}{4}T^D_I(R)-\frac{1}{4}\int_{B_R(0)^c}\frac{R}{r}|F^D|^2dv\nonumber\\
&\quad +\frac{1}{2}T^B_I(R)+ C\int_{B_R(0)^c}r^{-2}|\Phi^I_D||F_D|dv.
\end{align}
 Hence 
\begin{align}
 &  \frac{\p}{\p R}(RT^D_I(R)) \leq   - \int_{B_R(0)^c}\frac{R}{r}|F^D|^2dv+6T^B_I(R)+ 8C\int_{B_R(0)^c}r^{-2}|\Phi^I_D||F_D|dv,
\end{align}
and for all $\epsilon$, 
\begin{align}\label{srefuel}
  \frac{\p}{\p R}(R^{1-\epsilon}T^D_I(R)) \leq   &- R^{-\epsilon}\int_{B_R(0)^c}(\epsilon+\frac{R}{r}|F^D|^2)dv+6R^{-\epsilon}T^B_I(R)\nonumber\\
&+ 8CR^{-\epsilon}\int_{B_R(0)^c}r^{-2}|\Phi^I_D||F_D|dv,
\end{align}
Hence, either $R^{1-\epsilon}T^D_I(R)$ is decreasing or  
  \begin{align}\label{ineq:XieR}
 %\hspace*{-2.5cm}\!\!\!
 & \text{\bf Inequality  }  X_I(\epsilon,R) :& %\hspace{1.5cm} 
    &\int_{B_R(0)^c}(\epsilon+\frac{R}{r}|F^D|^2)dv %\nonumber\\
 \leq 6 T^B_I(R)+ 8C \int_{B_R(0)^c}r^{-2}|\Phi^I_D||F_D|dv. \nonumber
\end{align}

We now extend Proposition \ref{proppart} to instantons. 
\begin{proposition}\label{proppartI}
For every $\epsilon>0$,  there is $A_\epsilon>0$ such that 
\begin{align}
T_I(R)\leq A_\epsilon R^{\epsilon - 1}.
\end{align}
\end{proposition}
\begin{proof}
From \eqref{offdiagI} we have the estimate, for $R$ sufficiently large,
\begin{align}\label{offdiag3I}
 T^B_I(R)&\leq   4\delta^{-2}C_n \frac{T_I(\frac{R}{2})}{ R^2}+\delta^{-2}C_nT_I(\frac{R}{2})^{3-2(\frac{3}{4})^{n}}.
\end{align}  
 
If for some $\epsilon<1$, inequality  $X_I(\epsilon,R)$ holds, we have  
\begin{align}\label{crudeXI}
 T_I(R)& \leq   7\epsilon^{-1}\delta^{-2}C_n (4\frac{T_I(\frac{R}{2})}{ R^2}+ T_I(\frac{R}{2})^{3-2(\frac{3}{4})^{n}})+\epsilon^{-1}8C \int_{B_R(0)^c}r^{-2}|\Phi^I_D||F_D|dv\nonumber\\
& \leq   7\epsilon^{-1}\delta^{-2}C_n (4\frac{T_I(\frac{R}{2})}{ R^2}+ T_I(\frac{R}{2})^{3-2(\frac{3}{4})^{n}})+\epsilon^{-2}\|\Phi^I_D\|_{L^\infty(B_R(0)^c)} 32C^2 \int_{B_R(0)^c}r^{-4}dv\nonumber\\
&\quad+\frac{1}{2}
\|\Phi^I_D\|_{L^\infty(B_R(0)^c)} T_I(R).
\end{align}
For $R$ sufficiently large that $\|\Phi^I_D\|_{L^\infty(B_R(0)^c)} \leq \frac{1}{4}$, and for  $c:=\frac{2^{10}C^2\pi}{7}$, rewrite this inequality as 
\begin{align}\label{crudeX2I}
  T_I(R)& \leq    \frac{8C_n}{\epsilon\delta^{2}} (4\frac{T_I(\frac{R}{2})}{ R^2}+ T_I(\frac{R}{2})^{3-2(\frac{3}{4})^{n}})+\frac{c}{\epsilon^{2}}  \|\Phi^I_D\|_{L^\infty(B_R(0)^c)}R^{-1}.
\end{align}
Hence 
\begin{align}\label{crudeX3I}
 R^{1-\epsilon}T_I(R)& \leq    \frac{8C_n}{\epsilon\delta^{2}} 2^{1-\epsilon}(\frac{4}{ R^2}+ T_I(\frac{R}{2})^{2-2(\frac{3}{4})^{n}})(\frac{R}{2})^{1-\epsilon}T_I(\frac{R}{2})+\frac{c}{\epsilon^{2}}  \|\Phi^I_D\|_{L^\infty(B_R(0)^c)}R^{-\epsilon}.
\end{align}
There exists $R_{n,\epsilon}$ so that for $R\geq R_{n,\epsilon},$  one has   $\frac{8C_n}{\epsilon\delta^{2}} 2^{1-\epsilon}(\frac{4}{ R^2}+ T_I(\frac{R}{2})^{2-2(\frac{3}{4})^{n}})\leq \frac{1}{2}$, and $\frac{c}{\epsilon^{2}}  \|\Phi^I_D\|_{L^\infty(B_R(0)^c)}R^{-\epsilon}\leq \frac{1}{4}$; therefore, if inequality $X_I(\epsilon,R)$  holds for $R\geq R_{n,\epsilon}$, we have 
\begin{align}\label{step1I}
R^{1-\epsilon}T_I(R)& \leq   \max\{(\frac{R}{2})^{1-\epsilon}T_I(\frac{R}{2}), \frac{1}{4}\}.
\end{align} 
Given inequality \eqref{step1I}, the proof now proceeds exactly as the proof of Proposition \ref{proppart}, except now the iteration  can also stop (with a successful conclusion)  as soon as    
$\frac{1}{4}=\max\{(\frac{R}{2})^{1-\epsilon}T_I(\frac{R}{2}), \frac{1}{4}\}.$ 
We do not repeat the details. 
 
\end{proof}
With Proposition \ref{proppartI} in hand, the remaining estimates from Section 5 immediately extend to the instanton cases. We record them without further comment. 

\begin{theorem}\label{goodtheoremI}
For some $C_1,C_{2,\epsilon}>0$, $T_I(R)\leq C_1R^{-1},$ and $T^B_I(R)\leq C_{2,\epsilon}R^{-3+\epsilon}.$
\end{theorem}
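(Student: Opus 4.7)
The plan is to mirror the monopole proof of Theorem \ref{goodtheorem}, the essential novelty being the treatment of the new cross term $8C\int_{B_R(0)^c}r^{-2}|\Phi^I_D||F_D|\,dv$ appearing in \eqref{srefuel} which has no counterpart in \eqref{dortmund}. For the $T^B_I$ bound I would substitute $T_I(R/2)\leq C_\epsilon(R/2)^{\epsilon-1}$ from Proposition \ref{proppartI} into the off-diagonal estimate \eqref{offdiag3I}, obtaining
$$T^B_I(R)\leq C\bigl(R^{-3+\epsilon}+R^{(\epsilon-1)(3-2(3/4)^n)}\bigr).$$
Given any $\epsilon'>0$, choosing $\epsilon>0$ small and $n\in\IN$ large forces the exponent $(\epsilon-1)(3-2(3/4)^n)$ below $-3+\epsilon'$, yielding $T^B_I(R)\leq C_{2,\epsilon'}R^{-3+\epsilon'}$.

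For the $T_I$ bound itself, I would start from \eqref{srefuel} and apply the Young inequality
$$8C\,r^{-2}|\Phi^I_D||F_D|\leq \tfrac{R}{r}|F_D|^2 + \tfrac{16C^2}{R}\,r^{-3}|\Phi^I_D|^2,$$
so that the first piece precisely cancels the negative term $-\int_{B_R(0)^c}\tfrac{R}{r}|F^D|^2\,dv$, leaving
$$\frac{d}{dR}\bigl(RT^D_I(R)\bigr)\leq 6T^B_I(R)+\frac{16C^2}{R}\int_{B_R(0)^c}r^{-3}|\Phi^I_D|^2\,dv.$$
Once one shows the remainder integral decays at any positive power rate in $R$, the right-hand side is integrable, so $RT^D_I(R)$ is bounded and $T^D_I(R)\leq C_1/R$; together with the first step this delivers $T_I(R)\leq C_1R^{-1}$.

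The main obstacle is that the contour-integral bound $\|\Phi^I_D\|_{L^\infty}\leq K$ by itself only provides a logarithmically divergent control of $\int_{B_R(0)^c}r^{-3}|\Phi^I_D|^2\,dv$, so one first needs genuine pointwise decay of $\Phi^I_D$. My plan for this is to derive the instanton analog of the weighted identity \eqref{unwtintlsum}, which together with Proposition \ref{proppartI} yields $R^{-1}\int_{S_R(0)\times S^1}|\Phi^I_\lambda|^2\,d\sigma\,d\theta\leq CT_I(R)\leq CR^{\epsilon-1}$, and hence $\int_{B_{2R}(0)\setminus B_{R/2}(0)}|\Phi^I_D|^2\,dv\leq CR^{1+\epsilon}$ by integrating over a dyadic radial annulus. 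Moser iteration applied to the almost-subharmonic inequality $\Delta|\Phi^I_\lambda|^2\geq -2|F^B|^2-Cr^{-2}|\Phi^I_\lambda||F_A|$ from \eqref{bracket}, whose two error terms are integrably small thanks to Step 1 and the extension of Corollary \ref{coro1} to instantons, then yields the pointwise bound $|\Phi^I_D|(p)=O(|p|^{-1+\epsilon/2})$. Inserting this into the remainder integral gives $\int_{B_R(0)^c}r^{-3}|\Phi^I_D|^2\,dv=O(R^{-2+\epsilon})$ and the conclusion follows as above.
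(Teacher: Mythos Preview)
Your outline is sound and in fact more careful than the paper, which simply asserts that ``the remaining estimates from Section 5 immediately extend'' without addressing the cross term $8C\int_{B_R(0)^c}r^{-2}|\Phi^I_D||F_D|\,dv$ at all. Your treatment of $T^B_I$ and the Young inequality absorbing the $|F_D|^2$ piece into the negative term of \eqref{srefuel} are both correct.

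There is, however, a genuine gap in your route to the pointwise decay $|\Phi^I_D|=O(r^{-1+\epsilon/2})$. You invoke Moser iteration on $\Delta|\Phi^I_\lambda|^2\geq -2|F^B|^2-Cr^{-2}|\Phi^I_\lambda||F_A|$ and say the error terms are ``integrably small thanks to Step~1 and the extension of Corollary~\ref{coro1}.'' But Corollary~\ref{coro1} only gives $S_I(R)\to 0$ with no rate. The inhomogeneity $g_2:=Cr^{-2}|\Phi^I_\lambda||F_A|$ satisfies only $|g_2|\le C R^{-2}S_I(R/2)$ pointwise on $B_{R/2}(p)$, so the Green's-function contribution to the mean-value inequality is bounded by $\int_{B_{R/2}(p)}g_2/|x-p|\,dv\le C S_I(R/2)$, which is merely $o(1)$. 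This yields only $|\Phi^I_\lambda|^2=o(1)$, not the quantitative $O(R^{-2+\epsilon})$ you need; feeding $o(1)$ decay of $|\Phi^I_D|$ back into \eqref{srefuel} still leaves a non-integrable right-hand side.

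The fix is to first extract a rate for $S_I$. Feed $T_I(R)\le C_\epsilon R^{\epsilon-1}$ from Proposition~\ref{proppartI} into the iterated Moser estimate \eqref{oddineq3}: for each $n$, either $S_I(R)<4n^2R^{-2}$ or $S_I(R)\le C_3 S_I(R/2)^{(3/4)^n}T_I(R/2)^{2-2(3/4)^n}$, and iterating gives $S_I(R)=O(R^{-2+\epsilon'})$ for every $\epsilon'>0$. Once you have this, the simplest path bypasses your surface-integral/Moser detour entirely: since $|\nabla^{\mathcal L}_{\partial_r}\Phi^I_\lambda|\le C|F_A|\le CS_I(r)$ (from the computation of $[\nabla_{(\partial_j)^h},\Phi^I_\lambda]$ following \eqref{offdiagI}) and $\Phi^I_\lambda(q)\to 0$ as $|q|\to\infty$, the fundamental theorem of calculus along rays yields
\[
|\Phi^I_\lambda|(p)\le C\int_{|p|}^\infty S_I(t)\,dt = O(|p|^{-1+\epsilon'}).
\]
Inserting this into your remainder gives $\frac{1}{R}\int_{B_R(0)^c}r^{-3}|\Phi^I_D|^2\,dv=O(R^{-3+2\epsilon'})$, which is integrable, and the conclusion follows exactly as you wrote. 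Alternatively, the $S_I$ bound also repairs your Moser step on $|\Phi^I_\lambda|^2$, since the inhomogeneity now has pointwise decay $O(R^{-4+\epsilon'})$.
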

\begin{corollary}
$S_I(R)=O(R^{-2}).$ 
\end{corollary}
\begin{corollary}
$|\Phi^I_\lambda|= O(R^{-1}).$
\end{corollary}
\begin{corollary}\label{grad1I}
$\|R^{2+m}\nabla^m F_A\|_{L^\infty}<\infty.$
\end{corollary}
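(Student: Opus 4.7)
The plan is to mimic directly the argument used for Corollary \ref{grad1} in the monopole setting, absorbing the extra geometric terms that appear because $TN_k$ is no longer flat. The key ingredients already in hand are the quadratic curvature decay $S_I(R)=O(R^{-2})$, the total energy bound $T_I(R)=O(R^{-1})$, and the cubic decay of the Riemann tensor of $TN_k$.

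First, for $p\in M$ with $|\Pi(p)|=R$ sufficiently large, I would apply the Bochner formula for an anti-self-dual connection on a $4$-manifold,
\begin{align*}
\Delta\tfrac{1}{2}|F_A|^2 = -|\nabla F_A|^2+3\langle[F_{jk},F_{km}],F_{jm}\rangle +\langle \mathcal{R}\ast F_A,F_A\rangle,
\end{align*}
and integrate against $\eta(\tfrac{2|\Pi(x)-\Pi(p)|}{R})^2$, a cutoff depending only on the $\IR^3$ base coordinate (so that it is annihilated by $\frac{\p}{\p\theta}$ and behaves well with respect to $\Pi$). Integration by parts, together with the bounds $|\nabla\eta|=O(R^{-1})$, $S_I(R)=O(R^{-2})$, $T_I(R)=O(R^{-1})$, and $|\mathcal{R}|=O(R^{-3})$, yields
\begin{align*}
\int_{\Pi^{-1}(B_{R/4}(\Pi(p)))}|\nabla F_A|^2\,dv = O(R^{-3}).
\end{align*}

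Second, I would apply Moser iteration to $|\nabla F_A|^2$. Differentiating the Yang-Mills equation $d_A^\ast F_A=0$ (which follows from the Bianchi identity and anti-self-duality) produces a Bochner-type inequality for $|\nabla F_A|^2$ of the schematic form
\begin{align*}
\Delta\tfrac{1}{2}|\nabla F_A|^2 \leq -|\nabla^2 F_A|^2+C(|F_A|+|\mathcal{R}|+|\nabla\mathcal{R}|)|\nabla F_A|^2+\cdots,
\end{align*}
in which all zero-order coefficients are $O(R^{-2})$. The standard Moser iteration argument on a ball of radius $\sim R$ (as in Lemma \ref{moseri}, applied here to $|\nabla F_A|^2$) then converts the $L^2$ estimate into the pointwise bound $|\nabla F_A|^2(p)\leq CR^{-3}\cdot R^{-3}=O(R^{-6})$, i.e.\ $|\nabla F_A|=O(R^{-3})$.

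Finally, I would induct on $m$. Having obtained $\|R^{2+j}\nabla^j F_A\|_{L^\infty}<\infty$ for $0\leq j\leq m$, one differentiates the Bianchi identity $m+1$ times and applies the same Bochner-Moser scheme to $|\nabla^{m+1}F_A|^2$; all lower-order terms have been controlled by the inductive hypothesis, and the cubic decay of the curvature of $TN_k$ and its covariant derivatives ensures the extra geometric terms are absorbed. The main obstacle—rather than any serious analytic difficulty—is bookkeeping: one must check carefully that the metric's departure from flatness at infinity introduces only $O(r^{-3})$ corrections, which is exactly the regime in which the monopole-case proof of \cite[Proposition 16]{First} adapts verbatim. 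For this reason I would write the proof simply by citing the monopole case and noting that the ALF geometric terms are subleading.
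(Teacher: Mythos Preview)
Your proposal is correct and follows precisely the paper's approach: the paper simply states that ``the remaining estimates from Section 5 immediately extend to the instanton cases'' and records Corollary \ref{grad1I} without further comment, so your expansion---cutoff-times-Bochner to get the local $L^2$ bound on $\nabla F_A$, then Moser iteration, then induction on $m$ citing \cite[Proposition 16]{First}---is exactly the argument they intend, with the only addition being your (correct) observation that the cubically decaying Riemann tensor of $TN_k$ contributes only harmless $O(r^{-3})$ terms.
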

\begin{corollary}\label{Best1I}
$$T^B_I(R) = O(\delta^{-2m}R^{-1-2m}), \forall m,$$
and
$$ \|R^{m} F_A^B\|_{L^\infty}<\infty, \forall m.$$
\end{corollary}
\section{Angular Instanton Estimates}
The results from Section \ref{angsec} extend with little change to the instanton context. 
\begin{proposition}\label{exper1I}
$$\tau_I(L):= \int_{B_L(0)^c} \frac{1}{r}|\nabla (r\Phi^I_{\lambda})|^2  dv<\infty,$$ 
for some $c>0$, 
$$|\nabla (r\Phi^I_{\lambda})|^2\leq cr^{-2}\tau_I(\frac{r}{2}).$$
and 
$$\hat\tau_I(L):=\int_{ B_L(0)^c} r|\nabla(\nabla_r(r\Phi^I_{\lambda}))|^2dv<\infty.$$
\end{proposition}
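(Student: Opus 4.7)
The plan is to mimic the monopole proof of Proposition \ref{exper1} almost verbatim, substituting $\Phi^I_\lambda$ for $\Phi_\lambda$ and using the Hilbert--Schmidt pointwise norm on $ad(\mathcal{L})$, while carefully tracking the additional error terms introduced by the Taub--NUT geometry and by the fact that $[\nabla_{(\partial/\partial x^j)^h},\nabla_\theta]=F_A((\partial/\partial x^j)^h,\partial/\partial\theta)$ is a curvature term rather than zero. The crucial inputs, all proved earlier in this section, are: $|\Phi^I_\lambda|=O(R^{-1})$, $|F_A|=O(R^{-2})$, $|\nabla^m F_A|=O(R^{-2-m})$, together with $T_I(R)=O(R^{-1})$ and the decay of $\mathcal{R}$ and of the Christoffel data $V^{-1}V_{,j},\,V^{-2}(d\omega)_{ij}=O(r^{-2})$.

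First I would derive on $\IR^3\setminus\{\nu_1,\dots,\nu_k\}$, viewed as the base of the Hilbert bundle, the identity
\begin{align*}
\Delta\tfrac{1}{2}|r\Phi^I_\lambda|^2 &=-|\nabla(r\Phi^I_\lambda)|^2-\tfrac{1}{r}\tfrac{\partial}{\partial r}|r\Phi^I_\lambda|^2+r^2\langle \Phi^I_\lambda,\nabla^{ad(\mathcal{L})*}\nabla^{ad(\mathcal{L})}\Phi^I_\lambda\rangle+E(r),
\end{align*}
where $E(r)=O(r^{-2}|\Phi^I_\lambda||F_A|)+O(r^{-3}|r\Phi^I_\lambda|^2)$ collects all the Taub--NUT metric corrections (Christoffel symbols from $V$ and $\omega$, Riemann tensor contractions). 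The analog of \eqref{metoo2}, combined with the already established estimate \eqref{bracket} and the curvature bounds, gives $|\nabla^{ad(\mathcal{L})*}\nabla^{ad(\mathcal{L})}\Phi^I_\lambda|=O(r^{-2}|F_A|)$, so the cross term $r^2\langle\Phi^I_\lambda,\nabla^*\nabla\Phi^I_\lambda\rangle$ is absolutely integrable against $r^{-1}\,dv$, and similarly for $E(r)$.

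Next I would multiply this identity by $r^{-1}$ and integrate over $\Pi^{-1}(B_L(0)\setminus B_R(0))$ using the divergence theorem, producing (as in \eqref{panam})
\begin{align*}
L^2\tfrac{\partial}{\partial L}\Bigl(L^{-3}\!\!\int_{\Pi^{-1}(S_L(0))}\tfrac{1}{2}|r\Phi^I_\lambda|^2\,d\sigma\Bigr)&=\int_{\Pi^{-1}(B_L\setminus B_R)}\!\!r^{-1}|\nabla(r\Phi^I_\lambda)|^2\,dv+\text{absolutely integrable},
\end{align*}
plus an analogous boundary contribution at $R$. Since $|\Phi^I_\lambda|=O(R^{-1})$, the quantity $L^{-3}\int_{\Pi^{-1}(S_L)}|r\Phi^I_\lambda|^2\,d\sigma$ is $O(L^{-1})$, so its derivative is negative on an unbounded sequence $L_n\to\infty$; passing to the limit along such a sequence yields finiteness of $\tau_I(L)$ for any $L$ sufficiently large.

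The pointwise bound $|\nabla(r\Phi^I_\lambda)|^2\le c\,r^{-2}\tau_I(r/2)$ will follow by the same Moser iteration argument as in Proposition \ref{exper1}, applied to the Bochner inequality
\begin{align*}
\Delta\tfrac{1}{2}|\nabla(r\Phi^I_\lambda)|^2\le \tfrac{c}{r^2}|\nabla(r\Phi^I_\lambda)|^2+O(r^{-N}),
\end{align*}
whose derivation is literally that of \eqref{autumn} with the monopole commutator $[F_{mr},\Phi_\lambda]$ replaced by the operator commutator $[F_{\mathcal{L}}(\partial/\partial x^m,\partial/\partial r),\Phi^I_\lambda]$; the rapid decay of all factors makes the error term negligible. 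Finally, $\hat\tau_I(L)<\infty$ is obtained by multiplying the Bochner identity for $\frac{1}{2}|\nabla(r\Phi^I_\lambda)|^2$ by $r$ and integrating, exactly as in \eqref{rake}, using $\tau_I(L)<\infty$ as an input.

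The main obstacle is the bookkeeping of the extra error terms coming from the Taub--NUT Christoffel symbols and the horizontal lift: one must verify that every new term produced by $\nabla_{(\partial/\partial x^j)^h}(\partial/\partial x^j)^h$, by $\nabla_{(\partial/\partial x^j)^h}(\partial/\partial \theta)$ (already computed in \eqref{verde}), and by the nonzero commutators $[\nabla_{(\partial/\partial x^j)^h},\nabla_\theta]=F_A((\partial/\partial x^j)^h,\partial/\partial\theta)$ decays at least like $r^{-2}|F_A|$ or $r^{-3}$, so that none of them obstruct the convergence of the various weighted integrals. Once this is checked, everything else is a direct transcription of the monopole argument, so we would state only the Bochner identity explicitly and refer to Proposition \ref{exper1} for the remaining details.
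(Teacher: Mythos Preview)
Your proposal is correct and follows the paper's approach essentially line by line: the paper explicitly computes the instanton analogs of \eqref{boeing} and \eqref{autumn} (labelled \eqref{boeingI} and \eqref{3xcheck}), deriving in particular $\nabla^*\nabla\Phi^I_\lambda=\tfrac{k}{lr^2}\nabla_r\Phi^I_\lambda+O(r^{-5})$ via \eqref{verde}, and then declares the remainder of the argument identical to Proposition~\ref{exper1}. Your identification of the extra Taub--NUT error terms and their orders matches the paper's; the only difference is that the paper writes out the two Bochner identities in full rather than packaging the corrections into an abstract $E(r)$.
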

\begin{proof}
The proof is identical to the proof of Lemma \ref{exper1} once we check that the analogs of \eqref{boeing} and \eqref{autumn} hold in the instanton context. 
\begin{align}\label{boeingI}
\Delta \frac{1}{2}|r\Phi^I_\lambda|^2 &=   4r\Tr\, \Phi^I_\lambda  F_{r\theta}  
-   \frac{1}{2}r^2 |  F^{\lambda}|^2 +  r^2\Tr\, \Phi^I_\lambda[\nabla_{(\frac{\p}{\p x^j})^h},F_{j\theta}]
-3|\Phi^I_\lambda|^2 \nonumber\\
&\quad +  2\Tr\frac{1}{2\pi i}P^I_\lambda \int_{\mathcal{C}_\lambda}  zr^2(z+i\lambda-i\nabla_\theta)^{-1}F_{j\theta}
 (z+i\lambda-i\nabla_\theta)^{-1}(I-P^I_\lambda )F_{j\theta}dzP^I_\lambda\nonumber\\
&=  - 4r\langle \Phi^I_\lambda , F_{r\theta}  \rangle
-   \frac{1}{2}  | r F^{\lambda}|^2 -3|\Phi^I_\lambda|^2 +O(r^{-3})\nonumber\\
&= -  |   \nabla(r\Phi^I_\lambda)|^2
-    \frac{1}{r}\nabla_r |r\Phi^I_\lambda |^2 +O(r^{-3}).
\end{align}
This is the requisite analog of \eqref{boeing} to the instanton context. 
We estimate $\nabla^*\nabla\Phi^I_\lambda.$
\begin{align}
\nabla^*\nabla\Phi^I_\lambda&=-[\nabla_{(\frac{\p}{\p x^j})^h},[\nabla_{(\frac{\p}{\p x^j})^h},\Phi^I_\lambda]]\nonumber\\
&= \frac{2}{2\pi i} \int_{|z|=\delta}z (z+i\lambda-\nabla_\theta)^{-1} F_A((\frac{\p}{\p x^j})^h,\frac{\p}{\p \theta})\times\nonumber\\
&\phantom{F_A((\frac{\p}{\p x^j})^h,\frac{\p}{\p \theta})}
\times(z+i\lambda-\nabla_\theta)^{-1} F_A((\frac{\p}{\p x^j})^h,\frac{\p}{\p \theta})(z+i\lambda-\nabla_\theta)^{-1}dz\nonumber\\
&\quad + \frac{1}{2\pi i} \int_{|z|=\delta}z(z+i\lambda-\nabla_\theta)^{-1} [\nabla_{(\frac{\p}{\p x^j})^h},F_A((\frac{\p}{\p x^j})^h,\frac{\p}{\p \theta})](z+i\lambda-\nabla_\theta)^{-1}dz\nonumber\\
&= P^I_\lambda [\nabla_{(\frac{\p}{\p x^j})^h},F_A((\frac{\p}{\p x^j})^h,\frac{\p}{\p \theta})] +O(\text{commutators}) \nonumber\\
&= P^I_\lambda [\nabla_{(\frac{\p}{\p x^j})^h},F_A((\frac{\p}{\p x^j})^h,\frac{\p}{\p \theta})] +O(r^{-5})
\nonumber\\
&= \frac{k}{lr^2}\nabla_r\Phi^I_\lambda +O(r^{-5}).
\end{align}
In the preceding calculation, we have simplified the integrals by  shifting the curvature terms all the way to the right, acquiring commutators in the process. These resulting commutators of the curvature terms with $(z+i\lambda-\nabla_\theta)^{-1}$ are given schematically by  
$$[\mathcal{F},(z+i\lambda-\nabla_\theta)^{-1}]= -(z+i\lambda-\nabla_\theta)^{-1}[\nabla_\theta,\mathcal{F}](z+i\lambda-\nabla_\theta)^{-1}.$$
The $O(r^{-5})$ now follows from Corollary \ref{grad1I} (and computations of $\nabla_\theta (\frac{\p}{\p x^j})^h$ and $\nabla_\theta \frac{\p}{\p \theta}$). 
 We also have 
\begin{align}\label{3xcheck}
\Delta \frac{1}{2}|\nabla (r\Phi^I_\lambda)|^2
&= -|\nabla\nabla (r\Phi^I_\lambda)|^2+\langle \nabla^*\nabla \nabla (r\Phi^I_\lambda),\nabla (r\Phi^I_\lambda)\rangle \nonumber\\
&= -|\nabla\nabla (r\Phi^I_\lambda)|^2-\langle \nabla_m\nabla_j \nabla_j (r\Phi^I_\lambda),\nabla_m (r\Phi^I_\lambda)\rangle 
-\langle \nabla_j[F_{jm},r\Phi^I_\lambda ],\nabla_m (r\Phi^I_\lambda)\rangle \nonumber\\
&\quad
-\langle [F_{ jm}-d\omega_{jm}\nabla_\theta,\nabla_j  r\Phi^I_\lambda],\nabla_m (r\Phi^I_\lambda)\rangle \nonumber\\
&= -|\nabla\nabla (r\Phi^I_\lambda)|^2-\langle \nabla_m\nabla_j \nabla_j (r\Phi^I_\lambda),\nabla_m (r\Phi^I_\lambda)\rangle 
-\langle \nabla_j[F_{jm},r\Phi^I_\lambda ],\nabla_m (r\Phi^I_\lambda)\rangle \nonumber\\
&\quad
-\langle [F_{ jm}-d\omega_{jm}\nabla_\theta,\nabla_j  r\Phi^I_\lambda],\nabla_m (r\Phi^I_\lambda)\rangle \nonumber\\
&= -|\nabla\nabla (r\Phi^I_\lambda)|^2 +\langle \nabla_mr\nabla^*\nabla  \Phi^I_\lambda ,\nabla_m (r\Phi^I_\lambda)\rangle 
-2\langle  \nabla_r\nabla_m   \Phi^I_\lambda ,\nabla_m (r\Phi^I_\lambda)\rangle 
\nonumber\\
&\quad -2\langle  [F_{mr}, \Phi^I_\lambda] ,\nabla_m (r\Phi^I_\lambda)\rangle 
-2\langle  \frac{1}{r}\nabla_m   \Phi^I_\lambda ,\nabla_m (r\Phi^I_\lambda)\rangle 
+2\langle  \frac{1}{r}\nabla_r \Phi^I_\lambda ,\nabla_r (r\Phi^I_\lambda)\rangle 
\nonumber\\
&\quad
-\langle \nabla_m  \frac{2}{r}(\Phi^I_\lambda),\nabla_m (r\Phi^I_\lambda)\rangle 
-\langle \nabla_j[F_{jm},r\Phi^I_\lambda ],\nabla_m (r\Phi^I_\lambda)\rangle \nonumber\\
&\quad
-\langle [F_{ jm} ,\nabla_j  (r\Phi^I_\lambda)],\nabla_m (r\Phi^I_\lambda)\rangle 
+\langle d\omega_{jm}[  F_{\theta j} , r\Phi^I_\lambda]],\nabla_m (r\Phi^I_\lambda)\rangle\nonumber\\
&= -|\nabla\nabla (r\Phi^I_\lambda)|^2  
- r^{-1} \nabla_r|\nabla ( r\Phi^I_\lambda )|^2
+2r^{-2}| \nabla_r(r\Phi^I_\lambda )|^2 -\frac{2}{r^2}|\nabla^0  (r\Phi^I_\lambda)|^2
 \nonumber\\
&\quad -2\langle  [F_{mr}, \Phi^I_\lambda] ,\nabla_m (r\Phi^I_\lambda)\rangle 
-2\langle [F_{ jm} ,\nabla_j  (r\Phi^I_\lambda)],\nabla_m (r\Phi^I_\lambda)\rangle +  O(r^{-5}) \nonumber\\
&\leq \frac{c}{r^2}| \nabla (r\Phi^I_\lambda)|^2   +  O(r^{-5}) ,
\end{align}
for some $c>0$. 
This is the desired instanton analog of \eqref{autumn}. The remainder of the proof is the same as for Proposition \ref{exper1}.
\end{proof}
We also have the instanton version of Corollary \ref{shortcut}
\begin{corollary}\label{Ishortcut}
\begin{align}|F_A(\frac{\p}{\p r},\cdot)|= O(r^{-2} \tau_I^{\frac{1}{2}}(\frac{r}{2})),
\end{align}
\begin{align}\label{Itreefrog}
\Phi_\lambda^I = -r\ast F_A^\lambda(\frac{\p}{\p r}) + O(r^{-1}\tau_I^{\frac{1}{2}}(\frac{r}{2})),
\end{align}
and 
\begin{align}
\nabla^0\Phi_\lambda^I= O(r^{-2}\tau_I^{\frac{1}{2}}(\frac{r}{2})).
\end{align}
\end{corollary}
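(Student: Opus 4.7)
The plan is to mimic the proof of the monopole Corollary \ref{shortcut} line by line, with Proposition \ref{exper1I} playing the role that Proposition \ref{exper1} played there, the commutator identity
\[
[\nabla^{\mathcal{L}}_{(\p/\p x^j)^h},\Phi^I_\lambda] = P^I_\lambda F_{j\theta} P^I_\lambda + O(|F^B|)
\]
derived in Section 11 replacing the monopole equation $d_A\Phi_\lambda = \ast F^\lambda + O(|F^B|)$, and the bound $|F^B| = O(r^{-N})$ from Corollary \ref{Best1} absorbing all off-block-diagonal errors into the dominant $\tau_I^{1/2}$ terms.

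I would dispatch the third estimate first. Decompose $\nabla^{ad(\mathcal{L})} = dr\otimes \nabla_r + \nabla^0$ on sections over $\IR^3\setminus K$. Since $dr$ is orthogonal to the tangential directions on each sphere $S_R(0)$, the tangential component of $\nabla(r\Phi^I_\lambda)$ is precisely $r\nabla^0\Phi^I_\lambda$, so
\[
|\nabla^0\Phi^I_\lambda| \leq r^{-1}|\nabla(r\Phi^I_\lambda)| = O(r^{-2}\tau_I^{1/2}(r/2))
\]
by Proposition \ref{exper1I}. I would then prove the second estimate via the product rule $\Phi^I_\lambda = \nabla_r(r\Phi^I_\lambda) - r\nabla_r\Phi^I_\lambda$. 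Proposition \ref{exper1I} bounds $|\nabla_r(r\Phi^I_\lambda)|$ by $O(r^{-1}\tau_I^{1/2})$, while the commutator identity at $X = (\p/\p r)^h$ gives $\nabla_r\Phi^I_\lambda = F^\lambda_{r\theta} + O(|F^B|)$, so $r\nabla_r\Phi^I_\lambda = rF^\lambda_{r\theta} + O(r^{1-N})$ with the latter error absorbed. This yields the treefrog formula, interpreting $\ast F^\lambda_A(\p/\p r)$ via the monopole-instanton dictionary as the endomorphism $F^\lambda_{r\theta}$.

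For the first estimate, the commutator identity applied to angular tangential $X$ on $S_R(0)$ gives $F^\lambda_{X\theta} = (\nabla^0\Phi^I_\lambda)_X + O(|F^B|)$, which is $O(r^{-2}\tau_I^{1/2})$ by the third estimate. The ASD equation, together with the orientation convention $dv = V\Pi^*dv_{\IR^3}\wedge d\theta$ and the relation $\ast_M\phi = V\ast_{\IR^3}\phi\wedge(d\theta+\omega)$ for horizontal $\phi$, identifies the horizontal-horizontal component $F_A((\p/\p r)^h,(\p/\p\phi)^h)$ with $\pm V F_{\phi'\theta}$, where $\phi'$ is the complementary tangential horizontal direction, up to $\omega$-corrections of order $r^{-1}$. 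This promotes the $O(r^{-2}\tau_I^{1/2})$ bound on $F^\lambda_{\phi'\theta}$ to the same bound on the horizontal-horizontal components of $F^\lambda$; the vertical-radial component $F^\lambda_{r\theta}$ is controlled directly by the treefrog formula. Summing over $\lambda$ and using $|F^B| = O(r^{-N})$ gives the stated bound on the full $F_A(\p/\p r,\cdot)$.

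The main obstacle is the careful bookkeeping of $\omega$-corrections in the ASD identities and of commutator errors in the identification $[\nabla^{\mathcal{L}}_X,\Phi^I_\lambda] \approx F^\lambda_{X\theta}$. Because $\omega$ itself and its contribution to commutators like $[\nabla_{(\p/\p x^j)^h},\p/\p\theta]$ decay as $r^{-1}$, while $|F^B|$ decays faster than any polynomial, all these corrections absorb into the claimed $O(r^{-1}\tau_I^{1/2})$ and $O(r^{-2}\tau_I^{1/2})$ error terms; but one must verify this absorption is uniform, not just pointwise, and that the Hilbert-Schmidt framework for $\Phi^I_\lambda$ makes the product-rule identification $\Phi^I_\lambda = \nabla_r(r\Phi^I_\lambda) - r\nabla_r\Phi^I_\lambda$ rigorous in the $ad(\mathcal{L})$ connection introduced in Section 10.
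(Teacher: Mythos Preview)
Your proposal is correct and follows the same route the paper takes (implicitly---the paper states Corollary~\ref{Ishortcut} as an immediate consequence of Proposition~\ref{exper1I} without further argument, exactly parallel to the monopole case). Your ordering (third estimate from the tangential piece of $\nabla(r\Phi^I_\lambda)$, second from the product rule plus the commutator identity, first from the ASD relation $F_A((\p/\p r)^h,(\p/\p\phi)^h)=\pm VF_{\phi'\theta}$) and your identification of the inputs $|\nabla(r\Phi^I_\lambda)|\le c r^{-1}\tau_I^{1/2}$, $[\nabla_{X^h},\Phi^I_\lambda]=F^\lambda_{X\theta}+O(|F^B|)$, and $|F^B|=O(r^{-N})$ are exactly what is intended.

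One small correction: your claim that the treefrog identity controls the vertical piece $F^\lambda_{r\theta}$ to $O(r^{-2}\tau_I^{1/2})$ is too strong. Treefrog gives $F^\lambda_{r\theta}=-r^{-1}\Phi^I_\lambda+O(r^{-2}\tau_I^{1/2})$, and since $|\Phi^I_\lambda|=O(r^{-1})$ carries no extra $\tau_I$ factor, you only get $F^\lambda_{r\theta}=O(r^{-2})$. The first estimate should be read as the direct instanton transcription of the monopole statement, i.e., as a bound on the horizontal--horizontal radial components $F_A((\p/\p r)^h,(\p/\p\phi)^h)$; these are what the ASD equation ties to $F_{\phi'\theta}$ and what is actually used downstream (in the analog of Lemma~\ref{aboveanalysis}). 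In the monopole picture $F_{r\theta}$ corresponds to $(\ast F_A)(\p/\p r)=F_A(e_1,e_2)$, which was never part of $|F_A(\p/\p r,\cdot)|$ to begin with, so no $\tau_I^{1/2}$ gain is claimed or needed there.
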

\section{Instanton Connection Asymptotics}
Let $i\lambda\in \spec (\nabla_\theta(\infty)).$ Let $d_\lambda:=  \rank P^I_\lambda$. Given $\alpha >0$ and $p\in \IR^3$, $|p|$ large, decompose $[1,\ldots,d_\lambda]$ into disjoint subsets $J_i^\alpha(p,\lambda)$ as described in Section 7. 
Let $P^I_{J_i^\alpha(p,\lambda)}$ and $\Phi^I_{J_i^\alpha(p,\lambda)}$ denote respectively the associated projections and the summands of $\Phi^I_\lambda$, and let 
$F^{J_i^\alpha(p,\lambda)}:= P^I_{J_i^\alpha(p,\lambda)}F_AP^I_{J_i^\alpha(p,\lambda)} $. 
To extend the monopole connection asymptotics to the instanton case, we apply Chern-Weil theory to $E_{J_i^\alpha(p,\lambda)}$. We follow the proof of \cite[Theorem 23]{First}. 

Denote the eigenvalues of $\Phi^I_{\lambda}(x)$ by $\{\frac{a_j(x,\lambda)}{r}\}_{j=1}^{d_\lambda}$. 
In a neighborhood of $p$, we define $W(J_i^\alpha(p,\lambda))$ to be the hermitian vector bundle with fiber 
$$W(J_i^\alpha(p,\lambda))_x:= \mathop{\oplus}_{j \in E_{J_i^\alpha(p,\lambda)}}\Ker(\nabla_{\theta}-i\lambda-i\frac{a_j(x,\lambda)}{r} )\subset L^2(\Pi^{-1}(x),E),$$
and hermitian structure on the fiber induced from its inclusion in $L^2(\Pi^{-1}(x),E).$
This bundle is well defined on the closure of a connected neighborhood $U$ of $p$ if and only if $\exists \beta>0$ such that $|a_m(x,\lambda)-a_{l}(x,\lambda)|>\beta$, for all $x\in U$, and for all $(m,l)$ with $m\in J_i^\alpha(p,\lambda)$, and $l\not \in J_i^\alpha(p,\lambda).$ %We call such a $\beta$ a  $J_i^\alpha(p,\lambda)$ spectral gap on $U$.
 Moreover, $\forall\beta>0$, $\exists L_\beta>0$ so that for $|x|>L_\beta$, if  $|a_m(x,\lambda)-a_{l}(x,\lambda)|>\beta$, for all $(m,l)$ with $m\in J_i^\alpha(p,\lambda)$ then $W(J_i^\alpha(p,\lambda))$ is a well defined vector bundle on 
  the sphere $S_{|x|}(0)$.  
 The bundle $W(J_i^\alpha(p,\lambda))$ inherits the connection 
$$\nabla_{\frac{\p}{\p x^j}}^{P_{J_i^\alpha(p,\lambda)}} := P^I_{J_i^\alpha(p,\lambda)} \nabla_{(\frac{\p}{\p x^j})^h}P^I_{J_i^\alpha(p,\lambda)},$$
with curvature 
\begin{align}
F_{J_i^\alpha(p,\lambda)}(\frac{\p}{\p x^j},\frac{\p}{\p x^m}) &=  F^{J_i^\alpha(p,\lambda)}((\frac{\p}{\p x^j})^h,(\frac{\p}{\p x^m})^h)- d\omega(\frac{\p}{\p x^j},\frac{\p}{\p x^m})P^I_{J_i^\alpha(p,\lambda)} \nabla_{\frac{\p}{\p \theta}}P^I_{J_i^\alpha(p,\lambda)}\nonumber\\
&\quad +P^I_{J_i^\alpha(p,\lambda)} [[\nabla_{(\frac{\p}{\p x^j})^h},P^I_{J_i^\alpha(p,\lambda)}], [\nabla_{(\frac{\p}{\p x^m})^h},P^I_{J_i^\alpha(p,\lambda)} ]]P^I_{J_i^\alpha(p,\lambda)}.
\end{align}
We have 
\begin{align}
- d\omega(\frac{\p}{\p x^j},\frac{\p}{\p x^m})P^I_{J_i^\alpha(p,\lambda)} \nabla_{\frac{\p}{\p \theta}}P^I_{J_i^\alpha(p,\lambda)}
= \frac{i\lambda k}{2r^2} \ast dr(\frac{\p}{\p x^j},\frac{\p}{\p x^m}) P^I_{J_i^\alpha(p,\lambda)}+O(r^{-3}),
\end{align}
and
\begin{align}
 [\nabla_{(\frac{\p}{\p x^j})^h}&,P^I_{J_i^\alpha(p,\lambda)}(x)]=\\
 &=\frac{1}{2\pi i}\int_{\mathcal{C}_{J_i^\alpha(p,\lambda)}} (z+i\lambda-\nabla_\theta)^{-1}F_{j\theta}(z+i\lambda-\nabla_\theta)^{-1}dz \nonumber\\
&=\frac{1}{2\pi i}\int_{\mathcal{C}_{J_i^\alpha(p,\lambda)}} (z+i\lambda-\nabla_\theta)^{-1}P^I_{J_i^\alpha(p,\lambda)}F_{j\theta}(I-P^I_{J_i^\alpha(p,\lambda)})(z+i\lambda-\nabla_\theta)^{-1}dz \nonumber\\
&\quad+\frac{1}{2\pi i}\int_{\mathcal{C}_{J_i^\alpha(p,\lambda)}} (z+i\lambda-\nabla_\theta)^{-1}(I-P^I_{J_i^\alpha(p,\lambda)})F_{j\theta}P^I_{J_i^\alpha(p,\lambda)}(z+i\lambda-\nabla_\theta)^{-1}dz\nonumber\\
& = O(\frac{1}{\beta |x|^2}).
\end{align}
Hence 
\begin{align}
F_{J_i^\alpha(p,\lambda)}(\frac{\p}{\p x^j},\frac{\p}{\p x^m})(x) =&  \frac{i\lambda k}{2r^2} \ast dr(\frac{\p}{\p x^j},\frac{\p}{\p x^m}) P^I_{J_i^\alpha(p,\lambda)}\nonumber\\
&+F^{J_i^\alpha(p,\lambda)}((\frac{\p}{\p x^j})^h,(\frac{\p}{\p x^m})^h)
+O(\frac{1}{\beta^2 |x|^4}).
\end{align}
We have 
\begin{align}
\frac{\p}{\p x^j}\Tr\Phi^I_{J_i^\alpha(p,\lambda)}=  \Tr\, F^{J_i^\alpha(p,\lambda)}_{j\theta},
\end{align}
and
\begin{align}
\Delta\Tr\Phi^I_{J_i^\alpha(p,\lambda)} =&    ( i\lambda_a-i\lambda_b-i\mu)^{-1} \frac{1}{2}|((I-P_{J_i^\alpha(p,\lambda)})F_{A}^\mu P_{J_i^\alpha(p,\lambda)})_a^b|^2 \nonumber\\
&
+ ( i\lambda_b +i\mu  -i\lambda_a )^{-1}  \frac{1}{2}|(P_{J_i^\alpha(p,\lambda)}F_{A}^\mu(I-P_{J_i^\alpha(p,\lambda)}))_a^b|^2 \nonumber\\
&
-  \Tr\, P_{J_i^\alpha(p,\lambda)} [\nabla_{(\frac{\p}{\p x^j})^h},F_{j\theta}]
.\end{align}
We observe the following immediate consequence of \eqref{verde} and Corollary \ref{grad1I}. 
\begin{align}\label{verde2}
P^I_\lambda[\nabla_{(\frac{\p}{\p x^j})^h},F_{j\theta}] = \frac{k}{lr^2}\nabla_r\Phi^I_\lambda+O(r^{-5}).
\end{align}

We now have the instanton analog  of Proposition \ref{lunch} and Theorem \ref{specf} combined. 
 
\begin{theorem}\label{specfI}
For each $i\lambda\in \spec (\nabla_\theta(\infty))$, the spectrum of $|p|\Phi^I_\lambda(p)$ has a well defined limit as $|p|\to \infty$. Hence the spectrum of $\nabla_\theta$ has the form $\{i \lambda_a+i \frac{\beta_j}{r}+o(\frac{1}{r})\}_{a,j}$.  At large radius, $E$ decomposes into a direct sum of vector bundles $E=\oplus E(\lambda,\beta)$ such that $\Phi^I$ acts as $i \lambda+ i \frac{\beta}{r}+o(r)$ on $E(\lambda,\beta)$.  Moreover, on $\IR^3\setminus B_R(0)$, for $R$ large,  there exist bundles with connection $(W(\lambda,\beta),d_{W(\lambda,\beta)})$ such  that 
$$ E(\lambda,\beta) = \Pi^*W(\lambda,\beta),$$
and 
\begin{align}\label{form2}d_A = \mathop{\oplus}_{\lambda,\beta} \left(\Pi^*d_{W(\lambda,\beta)}+ i\lambda+\frac{i\beta}{r}+o(\frac{1}{r})\right).
\end{align}
Moreover,
\begin{align}\label{integrality}(l\beta-k\lambda)\in  \frac{1}{2}\IZ.
\end{align}
Let $P_{\lambda,\beta}$ denote unitary projection onto $E(\lambda ,\beta)$. Then 
for $\lambda_1\not = \lambda_2$, 
$$ |P_{\lambda_1,\beta_1}d_A P_{\lambda_2,\beta_2}|= O(r^{-N}), \forall N,$$ 
and for $\beta_1\not = \beta_2$ 
$$|P_{\lambda,\beta_1}d_A P_{\lambda,\beta_2}|= o(r^{-1}).$$ 
\end{theorem}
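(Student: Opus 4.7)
The approach is to run the monopole argument of Theorem \ref{specf} with $\Phi^I_\lambda$ in place of $\Phi_\lambda$, using the curvature formula for the spectral subbundles $W(J_i^\alpha(p,\lambda))$ that has already been derived in this section. The new feature is that the curvature of $W(J_k^\alpha(p,\lambda))$ over $\IR^3$ carries the extra $d\omega\cdot\nabla_\theta$ contribution
$$F_{J_k^\alpha(p,\lambda)}(\tfrac{\p}{\p x^j},\tfrac{\p}{\p x^m}) = F^{J_k^\alpha(p,\lambda)}((\tfrac{\p}{\p x^j})^h,(\tfrac{\p}{\p x^m})^h) + \frac{i\lambda k}{2r^2}\ast dr(\tfrac{\p}{\p x^j},\tfrac{\p}{\p x^m})P^I_{J_k^\alpha(p,\lambda)} + O(r^{-3}),$$
which, through Chern--Weil, will produce the shift by $k\lambda$ in the integrality condition.

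First I would establish the instanton analog of Proposition \ref{lunch}. For $\lambda_1\neq\lambda_2$ the commutator expansion $P^I_{J_i^\alpha(p,\lambda_1)}d_A P^I_{J_j^\alpha(p,\lambda_2)} = P^I_{J_i^\alpha(p,\lambda_1)}[d_A,P^I_{J_j^\alpha(p,\lambda_2)}]$ is dominated by $|F_A^B|$, and Corollary \ref{Best1} gives the $O(r^{-N})$ bound. For $i\neq j$ within a single $\lambda$-block, Corollary \ref{Ishortcut} shows $|P^I_{J_i^\alpha}F_A P^I_{J_j^\alpha}|=o(r^{-2})$, yielding the $o(\delta_\alpha(|q|)^{-1}r^{-1})$ bound. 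Next, I would prove the analog of Lemma \ref{aboveanalysis}: pre- and post-composing \eqref{Itreefrog} with $P^I_{J_k^\alpha(p,\lambda)}$ gives $\Phi^I_{J_k^\alpha(p,\lambda)} = -r\ast F^{J_k^\alpha(p,\lambda)}_A(\p/\p r) + O(r^{-1}\tau_I^{1/2})$; integrating $\Tr i\Phi^I_{J_k^\alpha(p,\lambda)}$ over $S_R$ and using the curvature formula above together with $\int_{S^2_R}\ast dr = 4\pi R^2$ yields
\begin{align*}
\Tr \Phi^I_{J_k^\alpha(p,\lambda)}(q) = \frac{i}{2|q|}\Bigl[c_1\bigl(W(J_k^\alpha(p,\lambda))\bigr)(S^2) - \lambda k\cdot\mathrm{rank}(W(J_k^\alpha(p,\lambda)))\Bigr] + O\bigl(|q|^{-1}\tau_I^{1/2}(1+\alpha^{-2}|q|^{-2})\bigr).
\end{align*}

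With this identity in hand, the argument of Theorem \ref{specf} transfers verbatim: splitting any $J_i^\alpha(p,\lambda)$ at a hypothetical interior gap would force two integer-quantized Chern numbers whose difference has to match a small spectral gap, and the integrality obstruction collapses the gap to $o(1)$. This yields conditions (i) and (ii) of the monopole proof, proving that $|p|\Phi^I_\lambda(p)$ has a well-defined limiting spectrum and hence the desired decomposition $E=\oplus E(\lambda,\beta)$ with $\Phi^I$ acting as $\lambda+\beta/r + o(1/r)$. For (iii), the integrality $c_1(E(\lambda,\beta))/\mathrm{rank}(E(\lambda,\beta))\in\IZ$ follows from Corollary \ref{Ishortcut} (which shows that the connection restricted to $S_R(0)$ converges to a Hermite--Einstein connection on $\IC\IP^1$, hence splits as a polystable sum of equal-slope line bundles), together with the normalization arising from the $V \sim l + \tfrac{k}{2r}$ expansion; this gives $l\beta - k\lambda\in\tfrac{1}{2}\IZ$.

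For the pullback assertion, I would define $W(\lambda,\beta)$ as the rank-$\mathrm{rank}\,E(\lambda,\beta)$ hermitian subbundle of $\mathcal{L}$ over $\IR^3\setminus K$ whose fibers are the spectral subspaces for the refined eigenvalue $i\lambda + i\beta/r + o(1/r)$ of $\nabla_\theta$, endowed with the induced connection $P^I_{\lambda,\beta}\nabla^{\mathcal{L}}P^I_{\lambda,\beta}$. Taking an $L^2(\Pi^{-1}(b))$-orthonormal frame of $\nabla_\theta$-eigenvectors at each point and pulling back gives a canonical isomorphism $E(\lambda,\beta)\iso\Pi^*W(\lambda,\beta)$; along the fiber direction $\nabla_\theta$ acts as the scalar $i\lambda+i\beta/r+o(1/r)$, producing the $i\lambda + i\beta/r$ contribution in \eqref{form2}, while the off-block terms are absorbed into $o(1/r)$ by the projection estimates already established. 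The main obstacle is getting the exact form of the integrality condition right: one must carefully keep track of the factor $l$ from the asymptotic Taub-NUT volume form and the sign conventions entering $d\omega = \ast_{\IR^3}dV$, so that the $\ast dr$ correction in $F_{J_k^\alpha}$ and the resulting Chern--Weil computation line up correctly with the polystable decomposition on the limiting $\IC\IP^1$ (see \cite[Section 2]{CN22}).
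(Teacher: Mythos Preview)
Your proposal is correct and follows essentially the same route as the paper: reduce to the argument of Theorem~\ref{specf}, derive the trace identity for $\Phi^I_{J^\alpha_k(p,\lambda)}$ from the curvature formula for $W(J^\alpha_k(p,\lambda))$ with its extra $d\omega\cdot\nabla_\theta$ term, invoke polystability on the limiting $\IC\IP^1$ for the integrality, and bound the off-block pieces of $d_A-\Pi^*d_W$ via the commutators $[\nabla_{(\p/\p x^j)^h},P_{J_i(\lambda)}]$. The factor $l$ you flagged enters exactly where the paper inserts it---through anti-self-duality and $V^{-1}\sim l^{-1}$ when converting $F_A((\tfrac{\p}{\p r})^h,\tfrac{\p}{\p\theta})$ to the tangential component $F(e_1^h,e_2^h)$---so your displayed trace identity should carry a $\tfrac{1}{l}$ in front (and the $\lambda k$ term with a $+$ sign), yielding $l\beta-\tfrac{k\lambda}{2}\in\tfrac{1}{2}\IZ$ as in the paper's proof and Theorem~\ref{thmB}.
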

\begin{proof}
The proof of the structure of the limiting spectrum is the same as for Theorem \ref{specf}. We are left to prove the statement about subbundles. 
Let $i\lambda\in \spec (\nabla_\theta(\infty))$. Let $\beta_1(\lambda)<\beta_2(\lambda)<\ldots<\beta_{l(\lambda)}(\lambda)$ denote the distinct elements  of the limiting spectrum of $|p|\Phi^I_\lambda(p)$. Let $t_j(\lambda)$ denote the multiplicity of $\beta_j(\lambda)$. 
Then we have $\{1,\ldots,d_\lambda\} = J_1(\lambda)\cup J_2(\lambda)\cdots\cup J_{l(\lambda)}(\lambda),$ 
$J_1(\lambda) := \{1,\ldots,t_1(\lambda)\},$ $J_2:= \{t_1+1,\ldots,t_1(\lambda)+t_2(\lambda)\},$ and in general, 
$J_i(\lambda):= \{ 1+\sum_{j< i}t_j(\lambda),\ldots,\sum_{j\leq i}t_j(\lambda) \}.$ This decomposition is of the form $J_i(\lambda): = J_i^\alpha(p,\lambda)$ for some $\alpha$ sufficiently small and all $p$ sufficiently large. Then on the complement of some compact set, the vector bundles $W(J_i(\lambda)):= W(J_i^\alpha(p,\lambda))$ are well defined rank $t_i(\lambda)$ bundles. Setting 
$$E(\lambda,\beta_j(\lambda):= \Pi^*W(J_i(\lambda)),$$
we have 
$$E=\mathop{\oplus}_{\lambda,j}E(\lambda,\beta_j(\lambda)),$$
as claimed. 

In the instanton context \eqref{crapaud} becomes 
\begin{align}
\Phi^I_{J^\alpha_k(p,\lambda)}&= -rF_A^{J^\alpha_k(p,\lambda)}((\frac{\p}{\p r})^h,\frac{\p}{\p \theta}) 
+ O(r^{-1}\tau_I^{\frac{1}{2}}(\frac{r}{2}))\nonumber\\
&= -rV^{-1}F_A^{J^\alpha_k(p,\lambda)}(e_1^h,e_2^h) + O(r^{-1}\tau_I^{\frac{1}{2}}(\frac{r}{2}))\nonumber\\
&= - \frac{r}{l}F_{J^\alpha_k(p,\lambda)}(e_1^h,e_2^h)+\frac{i\lambda k}{2lr}P_{J^\alpha_k(p,\lambda)} + O(r^{-1}\tau_I^{\frac{1}{2}}(\frac{r}{2})),
\end{align}
where $\{e_1,e_2\}$ is a local oriented orthonormal frame for $\langle \frac{\p}{\p r}\rangle^{\perp}$. Hence, \eqref{vix} becomes 
\begin{align}
\frac{l}{ \rank (E_{J^\alpha_k(p,\lambda)})}\tr  \Phi^I_{J^\alpha_k(p,\lambda)}(q)= \frac{i}{2r}\frac{c_1(E_{J^\alpha_k(p,\lambda)})}{ \rank (E_{J^\alpha_k(p,\lambda)})}+\frac{i\lambda k}{2r}+o(r^{-1}).
\end{align}
As observed in the proof of Theorem \ref{specf}, polystability implies $\frac{c_1(E_{J^\alpha_k(p,\lambda)})}{ \rank (E_{J^\alpha_k(p,\lambda)})}\in \IZ$, 
which implies $(l\beta-\frac{k\lambda}{2})\in  \frac{1}{2}\IZ.$

Let $d_W$ denote the connection on $\oplus_{\lambda,j}W_\lambda(J_j)$ obtained from the sum of the connections defined on each summand. 
We write
\begin{align}
\nabla_{(\frac{\p}{\p x^j})^h}- \nabla_{(\frac{\p}{\p x^j})^h}^{\Pi^*d_W}  
&= \nabla_{(\frac{\p}{\p x^j})^h} - \sum_{\lambda,i}P_{J_i(\lambda)}\nabla_{(\frac{\p}{\p x^j})^h}P_{J_i(\lambda)}\nonumber\\
&= [(I-P_{J_i(\lambda)}),[\nabla_{(\frac{\p}{\p x^j})^h},P_{J_i(\lambda)}]].  
\end{align}
Choose a frame so that if $P_{J_i(\lambda)}e^{im\theta}s_c\not = 0$, then $m=0$, and for such an $s_c$,  
we have
\begin{align}\label{wren}
&[\nabla_{(\frac{\p}{\p x^j})^h},P_{J_i(\lambda)}]P_{J_i(\lambda)} s_c\nonumber\\
& = \frac{1}{2\pi i}\int_{\mathcal{C}_{J_i(\lambda)}}(z+i\lambda-\nabla_\theta)^{-1}e^{i\mu\theta}(F_{j\theta}^\mu)_a^bs_b\langle\cdot ,s_a\rangle(z+i\lambda-\nabla_\theta)^{-1}dz P_{J_i(\lambda)}s_c\nonumber\\
& =  (i\lambda_c -i\mu-i\lambda_b)^{-1}e^{i \mu\theta}(F_{j\theta}^\mu)_c^bs_b .
\end{align} 
This term is $O(r^{-2})$ unless $\mu=0$ and $|\lambda_b-\lambda_c|= O(r^{-1}).$ In the latter case, it is $o(r^{-1})$. To see this, note that by Proposition \ref{exper1I} $|P^I_\lambda F_A(\frac{\p}{\p x^j})^h,\frac{\p}{\p r})^hP^I_\lambda -\frac{1}{r}\Phi^I_\lambda|= o(\frac{1}{r^2})$, and therefore $|P_{J_i(\lambda)} F_A(\frac{\p}{\p x^j})^h,\frac{\p}{\p r})^h(1-P_{J_i(\lambda)})|= o(\frac{1}{r^2}),$ and \eqref{form2} follows.  
\end{proof}

%\bibliographystyle{amsalpha}

%\providecommand{\bysame}{\leavevmode\hbox to3em{\hrulefill}\thinspace}
%\providecommand{\MR}{\relax\ifhmode\unskip\space\fi MR }
% \MRhref is called by the amsart/book/proc definition of \MR.
%\providecommand{\MRhref}[2]{%
%  \href{http://www.ams.org/mathscinet-getitem?mr=#1}{#2}
%}
%\providecommand{\href}[2]{#2}
\bibliographystyle{alphaurl}

 \end{document}